\numberwithin{equation}{section}
\theoremstyle{plain}
\newtheorem{theorem}{Theorem}[section]
\newtheorem{proposition}[theorem]{Proposition}
\newtheorem*{prop-no-number}{Proposition}
\newtheorem{lemma}[theorem]{Lemma}
\newtheorem{corollary}[theorem]{Corollary}
\theoremstyle{definition}
\newtheorem{remark}[theorem]{Remark}
\renewcommand{\leq}{\leqslant}
\renewcommand{\geq}{\geqslant}
\renewcommand{\subset}{\subseteq}
\def\E{\mathbb{E}}
\def\Z{\mathbb{Z}}
\def\R{\mathbb{R}}
\def\C{\mathbb{C}}
\def\N{\mathbb{N}}
\def\P{\mathbb{P}}
\def\F{\mathbb{F}}
\newcommand{\ud}{\,\mathrm{d}}
\DeclareMathOperator{\Var}{\mathbf{Var}}
\DeclareMathOperator{\ssum}{\textstyle{\sum\nolimits}}
\newcommand{\Zmod}[1]{\Z_#1} 
\providecommand{\abs}[1]{\lvert#1\rvert}
\providecommand{\norm}[1]{\lVert#1\rVert}
\title[Finding almost-periods probabilistically]{A probabilistic technique for finding almost-periods of convolutions}
\author{Ernie Croot}
\address{School of Mathematics\\
     Georgia Institute of Technology\\
     Atlanta, GA 30332\\
     USA
}
\email{ecroot@math.gatech.edu}
\author{Olof Sisask}
\address{School of Mathematical Sciences\\
     Queen Mary, University of London\\
     Mile End Road\\
     London E1 4NS\\
     United Kingdom
}
\email{O.Sisask@qmul.ac.uk}
\subjclass[2010]{11B30}
\begin{document}

\begin{abstract}
We introduce a new probabilistic technique for finding `almost-periods' of convolutions of subsets of groups. This gives results similar to the Bogolyubov-type estimates established by Fourier analysis on abelian groups but without the need for a nice Fourier transform to exist. We also present applications, some of which are new even in the abelian setting. These include a probabilistic proof of Roth's theorem on three-term arithmetic progressions and a proof of a variant of the Bourgain-Green theorem on the existence of long arithmetic progressions in sumsets $A+B$ that works with sparser subsets of $\{1, \ldots, N\}$ than previously possible.
In the non-abelian setting we exhibit analogues of the Bogolyubov-Freiman-Halberstam-Ruzsa-type results of additive combinatorics, showing that product sets $A_1 \cdot A_2 \cdot A_3$ and $A^2 \cdot A^{-2}$ are rather structured, in the sense that they contain very large iterated product sets. This is particularly so when the sets in question satisfy small-doubling conditions or high multiplicative energy conditions. We also present results on structures in $A \cdot B$.

Our results are `local' in nature, meaning that it is not necessary for the sets under consideration to be dense in the ambient group. In particular, our results apply to finite subsets of infinite groups provided they `interact nicely' with some other set.
\end{abstract}

\maketitle

\tableofcontents

\parindent 0mm
\parskip   4mm

\section{Introduction and statements of results}
There are many interesting problems that are concerned with counting various structures in subsets of groups. Many of these can be expressed in terms of the operation of convolution, defined for two functions $f, g : G \to \C$ on a group $G$ to be the function $f*g$ given by
\[ f*g(x) := \sum_{y \in G} f(y)g(y^{-1}x), \]
provided this exists for all $x \in G$. For example, many of the central objects of additive combinatorics can be expressed directly in terms of convolutions: the product set $A \cdot B = \{ ab : \text{$a \in A$, $b \in B$} \}$ of two subsets of a group is precisely the support of the function $1_A*1_B$, where $1_X$ denotes the indicator function of a set $X$, and the number of three-term arithmetic progressions in an additive set $A$, i.e., tuples $(a_1,a_2,a_3) \in A\times A\times A$ with $a_1+a_3=2a_2$, is $1_A*1_{-2\cdot A}*1_A(0)$. One may think of a convolution as being a sum of a function weighted by translates of another function and, as such, one may hope that convolutions are somewhat `smooth'. Indeed there are various senses in which this is true, and having precise notions of what it means can lead to interesting combinatorial consequences. Such results are often proved for abelian groups using the beautiful theory of Fourier analysis, where one uses the fact that convolutions and Fourier transforms interact in a very nice way. In this paper our aim is to demonstrate a new technique for establishing results about convolutions that are similar to those of Fourier analysis but that work on arbitrary groups, as well as to present applications.

\subsection{Notation}
Before we state our results let us introduce some notation---most of which is standard---directing the reader to the book \cite{tao-vu} of Tao and Vu or the paper \cite{tao:non-comm} of Tao for more details and interesting information about the concepts we use. Throughout the paper $G$ will denote a group (which may potentially be infinite). For two subsets $A$ and $B$ of $G$ we write $A \cdot B := \{ ab : \text{$a \in A$, $b \in B$} \}$ for the \emph{product set} of $A$ and $B$, and $A^{-1}$ for the collection of inverses of elements of $A$. Sometimes we shall omit the $\cdot$ and just juxtapose two sets to indicate the multiplication. For an element $t$ of $G$ we write $tA := \{ ta : a \in A \}$ for the left-translate of $A$ by $t$ and similarly for the right-translate $At$. If $k$ is a positive integer then we write $A^k := A \cdot A\cdots A$ for the $k$-fold product set of $A$, and $A^{-k}$ for the $k$-fold product set of $A^{-1}$.
For abelian groups we write the group operation additively and we give the corresponding definitions to $A+B$, $A-B$, $t+A$, $kA$, etc. The \emph{multiplicative energy} between two sets $A$ and $B$ is defined to be the quantity 
\[ E(A,B) := \sum_{x \in G} 1_A*1_B(x)^2; \]
for abelian groups this is known as the \emph{additive energy}.
For a function $f : G \to \C$ and a real number $p \geq 1$ we write $\norm{f}_p^p = \norm{f(x)}_p^p := \sum_{x \in G} |f(x)|^p$ for (the $p$th power of) the $L^p$ norm of $f$ provided this is finite. Thus $E(A,B) = \norm{1_A*1_B}_2^2$. A final piece of terminology: for finite groups $G$ we say that the \emph{density} of a set $A \subset G$ is $|A|/|G|$.

\subsection{The almost-periodicity results}
Our first result, then, is the following almost-periodicity-type theorem.

\begin{proposition}[$L^2$-almost-periodicity, local version]\label{L_2-local}
Let $G$ be a group, let $A, B \subset G$ be finite subsets, and let $\epsilon \in (0,1)$ be a parameter. Suppose $S \subset G$ is such that $|B \cdot S| \leq K|B|$. Then there is a set $T \subset S$ of size 
\[ |T| \geq \frac{|S|}{(2K)^{9/\epsilon^2}} \]
such that, for each $t \in T T^{-1}$,
\[ \norm{ 1_A*1_B(xt) - 1_A*1_B(x) }_2^2 \leq \epsilon^2 |A| |B|^2. \]
\end{proposition}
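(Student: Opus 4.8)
The plan is to approximate $1_A*1_B$ by a short random average of translates of $1_A$ and then to harvest many almost-periods by a pigeonhole argument in which the hypothesis $|B\cdot S|\leq K|B|$ bounds the number of distinct averages that can occur. For a tuple $\vec b=(b_1,\dots,b_k)\in B^k$ set
\[
  f_{\vec b}(x):=\frac{|B|}{k}\sum_{i=1}^{k}1_A(xb_i^{-1}),
\]
with $k$ a parameter to be chosen; when $\vec b$ is uniform on $B^k$ we have $\E f_{\vec b}=1_A*1_B$ pointwise. The feature that makes this useful is the identity $f_{\vec b}(xs^{-1})=f_{\vec b s}(x)$, where $\vec b s:=(b_1s,\dots,b_ks)$: right-translating the \emph{function} is the same as right-translating the \emph{sampling tuple}, and $\vec b s$ lands in $(B\cdot S)^k$ whenever $s\in S$.

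The first step is a second-moment estimate showing $f_{\vec b}\approx 1_A*1_B$ in $L^2$ for most $\vec b$. For fixed $x$, $f_{\vec b}(x)$ is $|B|$ times an average of $k$ independent $\{0,1\}$-variables of mean $1_A*1_B(x)/|B|$, so $\E|f_{\vec b}(x)-1_A*1_B(x)|^2\leq \frac{|B|}{k}1_A*1_B(x)$; summing over the finite support $A\cdot B$ gives $\E\norm{f_{\vec b}-1_A*1_B}_2^2\leq |A||B|^2/k$. Taking $k=\lceil 8/\epsilon^2\rceil$ and applying Markov's inequality, at least half of all $\vec b\in B^k$ lie in the set $\mathcal G$ of tuples with $\norm{f_{\vec b}-1_A*1_B}_2^2<\tfrac{\epsilon^2}{4}|A||B|^2$.

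Next comes the pigeonhole. For $\vec b\in B^k$ and $s\in S$ the shifted tuple $\vec b s$ lies in $(B\cdot S)^k$, a set of size at most $(K|B|)^k$, and $\vec b$ is recoverable from $\vec b s$ and $s$. Hence, classifying the pairs $(\vec b,s)\in\mathcal G\times S$ by the value of $\vec b s$, some tuple $\vec c\in (B\cdot S)^k$ is hit by at least $|\mathcal G|\,|S|/(K|B|)^k\geq \tfrac{|S|}{2K^k}$ pairs; equivalently the set $T:=\{s\in S:\vec c\,s^{-1}\in\mathcal G\}$ has $|T|\geq \tfrac{|S|}{2K^k}$. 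Since $K\geq 1$ and $\lceil 8/\epsilon^2\rceil\leq 9/\epsilon^2$ for $\epsilon\in(0,1)$, one has $2K^k\leq (2K)^k\leq (2K)^{9/\epsilon^2}$, which is the size claimed.

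Finally I would verify the almost-period property. Fix $t=s_is_j^{-1}$ with $s_i,s_j\in T$ and write $g=1_A*1_B$. By construction $\vec c\,s_i^{-1},\vec c\,s_j^{-1}\in\mathcal G$, and the translation identity gives $f_{\vec c}(\cdot\, s_i)=f_{\vec c\,s_i^{-1}}$ and $f_{\vec c}(\cdot\, s_j)=f_{\vec c\,s_j^{-1}}$, so $\norm{f_{\vec c}(\cdot\, s_i)-g}_2<\tfrac\epsilon2\sqrt{|A|}\,|B|$ and likewise for $j$. Substituting $x\mapsto xs_is_j^{-1}$ in the bound for $j$ (legitimate since $\norm{\cdot}_2$ is a sum over all of $G$) turns it into $\norm{f_{\vec c}(\cdot\, s_i)-g(\cdot\, t)}_2<\tfrac\epsilon2\sqrt{|A|}\,|B|$, and the triangle inequality yields $\norm{g(\cdot\, t)-g}_2<\epsilon\sqrt{|A|}\,|B|$, i.e.\ $\norm{1_A*1_B(xt)-1_A*1_B(x)}_2^2<\epsilon^2|A||B|^2$. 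I do not anticipate a genuinely hard estimate in any of this: the second-moment bound is routine. The real content is the conceptual choice of random object so that translation is transparent and so that $|B\cdot S|$ governs the count; the fiddly parts are keeping the left/right-translation bookkeeping consistent so the almost-periods land in $TT^{-1}$, and tracking the constants closely enough to reach the exponent $9/\epsilon^2$.
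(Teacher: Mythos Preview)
Your proof is correct and follows essentially the same strategy as the paper: approximate $1_A*1_B$ by a random short average, use Markov's inequality to get a large set of good samples, and then pigeonhole over shifted samples to produce many almost-periods. There are two minor differences worth noting. First, the paper proves the left-translate formulation (swapping the roles of $A$ and $B$ via the reflection identity $\widetilde{f*g}=\widetilde g*\widetilde f$) and samples random $k$-element \emph{subsets} of $A$, whereas you work directly with the right-translate statement and sample $k$-\emph{tuples} from $B$ with replacement; your choice makes the variance bound a one-line i.i.d.\ computation and avoids any mention of the hypergeometric distribution. Second, where the paper phrases the final step as a probability estimate over $k$-subsets of the enlarged set $S\cdot A$, you carry out the equivalent double-counting/pigeonhole directly on the map $(\vec b,s)\mapsto \vec b s\in(B\cdot S)^k$. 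Both variants yield the same bound $|T|\geq |S|/(2K)^{9/\epsilon^2}$ after choosing $k=\lceil 8/\epsilon^2\rceil$.
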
 

The condition that there should be a set $S$ such that $|B \cdot S| \leq K|B|$ is what justifies the terminology `local': one does not need $B$ to be dense in its ambient group in order to apply the proposition effectively. All one needs is for $B$ to interact nicely with some large set $S$, a condition that we say more about in \S\ref{prelims}. If one knows little about the structure of $B$ one can still obtain useful conclusions from the proposition provided $B$ is dense in some structured set. For example, if $G = \Z$ and $B \subset [N] := \{1, \ldots, N\}$ with $|B| \geq \beta N$ (a case of interest in many problems) then one may take $S = [N]$ and $K = 2/\beta$. Similarly, if $G$ is finite then one can always take $S = G$, regardless of $B$, which immediately gives the following corollary.

\begin{corollary}[$L^2$-almost-periodicity, global version]\label{L_2-global}
Let $G$ be a finite group, let $A, B \subset G$, and let $\epsilon \in (0,1)$ be a parameter. Suppose $B$ has density $\beta$. Then there is a set $T \subset G$ of size at least $(\beta/2)^{9/\epsilon^2} |G|$ such that, for each $t \in T T^{-1}$,
\[ \norm{ 1_A*1_B(xt) - 1_A*1_B(x) }_2^2 \leq \epsilon^2 |A| |B|^2. \]
\end{corollary}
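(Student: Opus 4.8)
Since the corollary is the case $S = G$, $K = 1/\beta$ of Proposition~\ref{L_2-local} (one has $|B\cdot G| = |G| = |B|/\beta = K|B|$, so $(2K)^{9/\epsilon^2} = (2/\beta)^{9/\epsilon^2}$, and $TT^{-1}\subseteq G$ automatically), it is enough to prove the proposition. The plan starts from the identity $1_A*1_{Bs}(x) = \sum_{c\in Bs}1_A(xc^{-1})$, valid for every $s\in G$, which writes $1_A*1_{Bs}$ as $|B|$ times an average over $Bs$ of translates of $1_A$. The first step is to note that replacing this average by an empirical average over $k$ independent uniform samples from $Bs$ costs little in $L^2$: for $\mathbf{c} = (c_1,\dots,c_k)\in(Bs)^k$ set $f_{\mathbf{c}}(x) := \tfrac{|B|}{k}\sum_{j=1}^k 1_A(xc_j^{-1})$; for each fixed $x$ the summands $1_A(xc_j^{-1})$ are i.i.d.\ $\{0,1\}$-valued with mean $\tfrac1{|B|}1_A*1_{Bs}(x)$, so $\Var(f_{\mathbf{c}}(x))\leq\tfrac{|B|}{k}1_A*1_{Bs}(x)$, and summing over $x$ (with $\sum_x 1_A*1_{Bs}(x) = |A||B|$) yields
\[ \E_{\mathbf{c}\in(Bs)^k}\norm{f_{\mathbf{c}} - 1_A*1_{Bs}}_2^2 \leq \frac{|A||B|^2}{k}. \]

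The second step is where the hypothesis $|B\cdot S|\leq K|B|$ enters, and it is the point of the argument: we extract from the above a \emph{single} tuple that is simultaneously a good sample for many translates. By Markov's inequality, for each $s\in S$ at least half of the tuples $\mathbf{c}\in(Bs)^k$ satisfy $\norm{f_{\mathbf{c}} - 1_A*1_{Bs}}_2^2\leq 2|A||B|^2/k$; call such a pair $(s,\mathbf{c})$ \emph{good}. There are then at least $\tfrac12|S||B|^k$ good pairs, while the second coordinate of any good pair lies in $(B\cdot S)^k$, a set of size at most $(K|B|)^k$; so by the pigeonhole principle there is a single $\mathbf{c}^*\in(B\cdot S)^k$ such that $(t,\mathbf{c}^*)$ is good for every $t$ in a set $T := \{t\in S : (t,\mathbf{c}^*)\text{ is good}\}$ with $|T|\geq|S|/(2K^k)$. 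Since $f_{\mathbf{c}^*}$ does not depend on $t$, the triangle inequality through $f_{\mathbf{c}^*}$ gives, for all $t_1,t_2\in T$,
\[ \norm{1_A*1_{Bt_1} - 1_A*1_{Bt_2}}_2 \leq \norm{1_A*1_{Bt_1} - f_{\mathbf{c}^*}}_2 + \norm{f_{\mathbf{c}^*} - 1_A*1_{Bt_2}}_2 \leq 2\sqrt{2}\,|B|\sqrt{|A|/k}. \]

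The last step is a change of variables that converts this clustering of the translates $1_A*1_{Bt}$ into the stated almost-periodicity. From the definition one has $(1_A*1_B)(zu^{-1}) = 1_A*1_{Bu}(z)$ for all $z,u\in G$; applying this with $z = xt_1$ and $u = t_2$, and then with $u = t_1$, gives $(1_A*1_B)(xt_1t_2^{-1}) = 1_A*1_{Bt_2}(xt_1)$ and $(1_A*1_B)(x) = 1_A*1_{Bt_1}(xt_1)$, so, $x\mapsto xt_1$ being a bijection of $G$, $\norm{1_A*1_B(xt) - 1_A*1_B(x)}_2 = \norm{1_A*1_{Bt_2} - 1_A*1_{Bt_1}}_2$ when $t = t_1t_2^{-1}$. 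Choosing $k = \lceil 8/\epsilon^2\rceil$ makes $2\sqrt2\sqrt{1/k}\leq\epsilon$; since $\epsilon<1$ we then have $k\leq 9/\epsilon^2$, and since $K\geq1$ we have $2K^k\leq(2K)^k\leq(2K)^{9/\epsilon^2}$. Combining the displays, $T\subseteq S$ has $|T|\geq|S|/(2K)^{9/\epsilon^2}$ and $\norm{1_A*1_B(xt) - 1_A*1_B(x)}_2^2\leq\epsilon^2|A||B|^2$ for every $t\in TT^{-1}$, as required.

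I expect the second step to be the main obstacle to get right. The first step is a routine second-moment estimate and the third is bookkeeping with the non-abelian convolution; but the idea of sampling directly from the translates $Bs\subseteq B\cdot S$, so that the small-product-set hypothesis bounds the number of distinct samples and lets pigeonhole produce a large set $T$ on which $1_A*1_{Bt}$ is essentially constant, is the crux, and it is also what is responsible for the characteristic $(2K)^{O(1/\epsilon^2)}$ shape of the conclusion.
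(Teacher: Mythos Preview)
Your proof is correct and follows essentially the same approach as the paper. The paper samples a random $k$-subset of $A$ (without replacement) and proves the left-translate version before flipping via the reflection identity, whereas you sample i.i.d.\ tuples from the translates $Bs$ and argue directly on the right; the pigeonhole/counting step is the same idea in both, and the change-of-variables you use in your last step plays the role of the paper's reflection identity.
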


On an informal level these results say that convolutions are somewhat continuous: one may find a large number of translates $t$ such that the function $1_A*1_B$ does not change by much---in an $L^2$ sense---when translated by $t$. Having $L^2$-almost-periods provides one with good control in many applications, particularly those involving three-fold or higher convolutions, such as when dealing with the number of three-term progressions in a set or with a triple-fold product set $A\cdot B \cdot C$. But for certain applications involving only a single convolution it turns out that having $L^p$-almost-periods for a somewhat large $p$ is more useful.

\begin{proposition}[$L^p$-almost-periodicity, local version]\label{L_p-local}
Let $G$ be a group, let $A, B \subset G$ be finite subsets, and let $\epsilon \in (0,1)$ and $m \geq 1$ be parameters. Suppose $S \subset G$ is such that $|B \cdot S| \leq K|B|$. Then there is a set $T \subset S$ of size 
\[ |T| \geq \frac{|S|}{(2K)^{50m/\epsilon}} \]
such that
\[ \norm{ 1_A*1_B(xt) - 1_A*1_B(x) }_{2m}^{2m} \leq \max\left(\epsilon^m |A B| |B|^m, \norm{ 1_A*1_B }_m^m \right) \epsilon^m |B|^m \]
for each $t \in T T^{-1}$.
\end{proposition}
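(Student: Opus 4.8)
The plan is to run the probabilistic sampling method, carrying the estimates at the level of $2m$-th moments rather than second moments. Write $1_A*1_B=\sum_{b\in B}1_{Ab}$, so that $1_A*1_B$ is $|B|$ times an average over $b\in B$ of the right-translates $1_{Ab}$ of $1_A$. I would fix a sample size $k$ of order $m/\epsilon$ — a direct check of the numerical constants below shows one may take $k=\lfloor 50m/\epsilon\rfloor$, a positive integer since $\epsilon<1$ and $m\geq1$ — and for a $k$-tuple $\mathbf{b}=(b_1,\dots,b_k)$ of elements of $G$ put $g_{\mathbf{b}}:=\tfrac{|B|}{k}\sum_{i=1}^k 1_{Ab_i}$. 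If $\mathbf{b}$ is uniform on $B^k$ then, for each fixed $x$, the values $1_{Ab_i}(x)=1_A(xb_i^{-1})$ are i.i.d.\ Bernoulli variables with mean $1_A*1_B(x)/|B|$, so $\mathbb{E}_{\mathbf{b}\in B^k}\,g_{\mathbf{b}}=1_A*1_B$.

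The crux is to show $g_{\mathbf{b}}$ is close to $1_A*1_B$ in $L^{2m}$ on average. I would apply a moment inequality of Rosenthal / Marcinkiewicz--Zygmund type to the i.i.d.\ mean-zero sum defining $g_{\mathbf{b}}(x)-1_A*1_B(x)$, pointwise in $x$ — handling separately the $x$ at which the mean $1_A*1_B(x)/|B|$ is comparable to $1/k$ or larger and those at which it is much smaller, and using that $1_A*1_B$ is supported on $AB$ — and then sum over $x$. This should yield a bound of the form
\[ \mathbb{E}_{\mathbf{b}\in B^k}\,\norm{g_{\mathbf{b}}-1_A*1_B}_{2m}^{2m}\;\leq\;\Bigl(\tfrac{Cm|B|}{k}\Bigr)^{m}\norm{1_A*1_B}_m^m\;+\;\Bigl(\tfrac{Cm^2|B|^2}{k^2}\Bigr)^{m}|AB| \]
with $C$ absolute. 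The two summands are matched to the two entries of the maximum in the proposition: for $k$ of the stated size they are at most $2^{-2m-2}\norm{1_A*1_B}_m^m\epsilon^m|B|^m$ and $2^{-2m-2}\epsilon^{2m}|AB||B|^{2m}$ respectively, so $\mathbb{E}_{\mathbf{b}}\norm{g_{\mathbf{b}}-1_A*1_B}_{2m}^{2m}\leq 2^{-2m-1}R$, where $R$ denotes the right-hand side of the proposition. Markov's inequality then makes $\mathcal{G}:=\{\mathbf{b}\in B^k:\norm{g_{\mathbf{b}}-1_A*1_B}_{2m}^{2m}\leq 2^{-2m}R\}$ have $|\mathcal{G}|\geq|B|^k/2$.

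Next comes the pigeonhole, which is where the hypothesis $|B\cdot S|\leq K|B|$ enters. For $s\in S$ set $\mathbf{b}s:=(b_1s,\dots,b_ks)$; one computes $g_{\mathbf{b}s}(x)=g_{\mathbf{b}}(xs^{-1})$ and $1_A*1_{Bs}(x)=1_A*1_B(xs^{-1})$, so translation invariance of $\norm{\cdot}_{2m}$ gives $\norm{g_{\mathbf{b}s}-1_A*1_{Bs}}_{2m}^{2m}=\norm{g_{\mathbf{b}}-1_A*1_B}_{2m}^{2m}$, which is $\leq 2^{-2m}R$ once $\mathbf{b}\in\mathcal{G}$. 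As $b_is\in Bs\subseteq B\cdot S$, the sets $\mathcal{H}_s:=\{\mathbf{b}s:\mathbf{b}\in\mathcal{G}\}$ all lie in $(B\cdot S)^k$, and $|\mathcal{H}_s|=|\mathcal{G}|\geq|B|^k/2$ since $\mathbf{b}\mapsto\mathbf{b}s$ is injective. Since $\sum_{s\in S}|\mathcal{H}_s|\geq\tfrac12|S||B|^k$ while $|(B\cdot S)^k|=|B\cdot S|^k\leq(K|B|)^k$, a double count gives some $\mathbf{c}^*\in(B\cdot S)^k$ lying in $\mathcal{H}_s$ for at least $|S||B|^k/(2(K|B|)^k)=|S|/(2K^k)$ values of $s$; since $K\geq1$ and $1\leq k\leq 50m/\epsilon$ this is $\geq|S|/(2K)^{50m/\epsilon}$. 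Let $T\subseteq S$ be this set of $s$; it has the required size, and for each $s\in T$, writing $\mathbf{c}^*=\mathbf{b}s$ with $\mathbf{b}\in\mathcal{G}$ shows $g_{\mathbf{c}^*}=g_{\mathbf{b}s}$, so $\norm{g_{\mathbf{c}^*}-1_A*1_{Bs}}_{2m}^{2m}\leq 2^{-2m}R$ for every $s\in T$ with the \emph{same} function $g_{\mathbf{c}^*}$.

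To finish, for $s_1,s_2\in T$ the triangle inequality gives $\norm{1_A*1_{Bs_1}-1_A*1_{Bs_2}}_{2m}\leq 2\bigl(2^{-2m}R\bigr)^{1/2m}=R^{1/2m}$, i.e.\ $\norm{1_A*1_{Bs_1}-1_A*1_{Bs_2}}_{2m}^{2m}\leq R$; substituting $x=ys_2$ into $1_A*1_{Bs_1}(x)-1_A*1_{Bs_2}(x)=1_A*1_B(xs_1^{-1})-1_A*1_B(xs_2^{-1})$ identifies this with $\norm{1_A*1_B(y\,s_2s_1^{-1})-1_A*1_B(y)}_{2m}^{2m}$, and as $(s_1,s_2)$ runs over $T\times T$ the element $t=s_2s_1^{-1}$ runs over all of $TT^{-1}$, which gives the conclusion. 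I expect the one genuinely delicate point to be the moment estimate of the second step: one needs a Rosenthal/Marcinkiewicz--Zygmund inequality with sharp enough explicit constants, together with the correct split of the Bernoulli sums into their typical and atypical ranges, so that a sample of size only $O(m/\epsilon)$ already pushes $\mathbb{E}_{\mathbf{b}}\norm{g_{\mathbf{b}}-1_A*1_B}_{2m}^{2m}$ below the two-term maximum $R$. Everything afterwards — the incidence count and the unwinding of the translates — is soft, the only mild care being to keep the non-commutative translations on the right throughout.
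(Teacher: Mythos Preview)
Your proposal is correct and follows essentially the same strategy as the paper: random sampling to approximate $1_A*1_B$, a $2m$-th moment bound summed over $x\in AB$, Markov's inequality, and a pigeonhole over translates. The implementation differs in two minor but pleasant ways. First, you sample i.i.d.\ tuples from $B^k$ (with replacement) and prove the right-translate statement directly, whereas the paper samples a random $k$-subset of $A$ (without replacement), proves the left-translate version, and then invokes the reflection identity $\widetilde{f*g}=\widetilde{g}*\widetilde{f}$; your choice makes the summands genuinely i.i.d.\ Bernoulli and slightly streamlines the pigeonhole (tuples in $(BS)^k$ rather than $k$-subsets of $SA$, yielding $|S|/(2K^k)$ before weakening). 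Second, for the moment bound you appeal to a Rosenthal/Marcinkiewicz--Zygmund inequality, while the paper establishes the equivalent estimate from scratch in an appendix (bounding binomial $2m$-th moments via Bennett--Bernstein tail bounds, then transferring to the hypergeometric via Hoeffding's convexity comparison); these give the same two-term bound you wrote down, and your identification of this as the one delicate point is exactly right.
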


As before, this has the following `global' corollary.

\begin{corollary}[$L^p$-almost-periodicity, global version]\label{L_p-global}
Let $G$ be a finite group, let $A, B \subset G$ be subsets, and let $\epsilon \in (0,1)$ and $m \geq 1$ be parameters. Suppose $B$ has density $\beta$. Then there is a set $T \subset G$ of size at least $(\beta/2)^{50m/\epsilon} |G|$ such that
\[ \norm{ 1_A*1_B(xt) - 1_A*1_B(x) }_{2m}^{2m} \leq \max\left(\epsilon^m |A B| |B|^m, \norm{ 1_A*1_B }_m^m \right) \epsilon^m |B|^m \]
for each $t \in T T^{-1}$.
\end{corollary}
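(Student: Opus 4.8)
The final statement is Corollary~\ref{L_p-global}, the `global' version of the $L^p$-almost-periodicity proposition. The plan is simply to derive it from Proposition~\ref{L_p-local} by the same mechanism already used to pass from Proposition~\ref{L_2-local} to Corollary~\ref{L_2-global}. The key observation is that when $G$ is finite, one is always free to take the auxiliary set $S$ in Proposition~\ref{L_p-local} to be $G$ itself, since then $B \cdot S = G$ (assuming $B$ nonempty) and $|B \cdot S| = |G| = |G|/|B| \cdot |B|$, so one may take $K = |G|/|B| = 1/\beta$.

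\textbf{Key steps.} First I would dispose of the trivial case $B = \emptyset$, where $1_A*1_B \equiv 0$ and the conclusion holds with $T = G$. So assume $B \neq \emptyset$. Next, apply Proposition~\ref{L_p-local} with this choice $S = G$ and $K = 1/\beta$, together with the given $A$, $\epsilon$, and $m$: the hypothesis $|B \cdot S| \leq K|B|$ is satisfied since $|B \cdot G| = |G| = (1/\beta)|B|$. The proposition then produces a set $T \subset S = G$ with
\[ |T| \geq \frac{|S|}{(2K)^{50m/\epsilon}} = \frac{|G|}{(2/\beta)^{50m/\epsilon}} = (\beta/2)^{50m/\epsilon}|G| \]
such that for every $t \in T T^{-1}$,
\[ \norm{ 1_A*1_B(xt) - 1_A*1_B(x) }_{2m}^{2m} \leq \max\left(\epsilon^m |A B| |B|^m, \norm{ 1_A*1_B }_m^m \right) \epsilon^m |B|^m. \]
This is exactly the asserted conclusion, with the size bound rewritten using $2K = 2/\beta$.

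\textbf{Main obstacle.} There is essentially no obstacle here: the corollary is a direct specialisation of the local proposition, and the only point requiring a word of care is the verification that $K = 1/\beta$ is an admissible choice (which needs $B$ nonempty, handled separately) and the bookkeeping that $(2K)^{50m/\epsilon} = (2/\beta)^{50m/\epsilon}$, so that $|S|/(2K)^{50m/\epsilon} = (\beta/2)^{50m/\epsilon}|G|$ as claimed. No new ideas or estimates are needed beyond those already in Proposition~\ref{L_p-local}.
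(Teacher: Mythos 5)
Your proposal is correct and is exactly the paper's argument: the authors obtain the global corollary from Proposition \ref{L_p-local} precisely by noting that for finite $G$ one may take $S=G$ and $K=1/\beta$, so that $|S|/(2K)^{50m/\epsilon}=(\beta/2)^{50m/\epsilon}|G|$. Your extra remark disposing of the trivial case $B=\emptyset$ is harmless bookkeeping and does not change the argument.
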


We give some further variants of the above propositions in \S\ref{main_proofs}. In particular one can with a slight change to the hypotheses find left-translates instead of right-translates, which may be more useful depending on the application.

Our proofs of the above propositions are of a probabilistic nature, involving a `random sampling' procedure that finds small subsets of one of the sets that behave similarly to the set itself (in a precise sense). This procedure is the same regardless of whether the group is commutative or not, which places our method in stark contrast to the Fourier-analytic methods that are typically the port of call for dealing with almost-periodicity in abelian groups. We say more about the abelian versions of the above results and the Fourier-analytic methods that lead to them in \S\ref{remarks}, turning now instead to applications of our results.

\subsection{Applications}\label{applications}
We shall apply the almost-periodicity results in four directions in this paper, namely towards
\begin{enumerate}
\item non-commutative analogues of the Bogolyubov-Freiman-Halberstam-Ruzsa theory that shows that sumsets are structured,
\item a low-density version of the Bourgain-Green theorem on long arithmetic progressions in sumsets $A+B$,
\item a probabilistic proof of Roth's theorem on arithmetic progressions and
\item a new result on the approximate translation-invariance of products of so-called strong $K$-approximate groups.
\end{enumerate}
We discuss each of these in turn.

\subsubsection*{Structures in product sets}
A general objective in additive combinatorics is to show that sumsets in abelian groups are rather structured objects. A rather useful such result due to Bogolyubov \cite{bogolyubov} that was highlighted by Ruzsa \cite{ruzsa:Freiman} in the additive-combinatorial context shows that sets $2A - 2A$ are highly structured, particularly if $A$ has small doubling. For non-abelian groups an analogue of this was recently proved by Sanders \cite{sanders:non-abelian-balog-szemeredi}:

\begin{theorem}\label{sanders-core}
Suppose $G$ is a group, $A \subset G$ is a finite set such that $|A^2| \leq K|A|$ and $k \in \N$ is a parameter. Then there is a symmetric set $S$ containing the identity such that
\[ S^k \subset A^2 \cdot A^{-2}  \text{ and }  |S| \geq \exp\left(-K^{O(k)}\right) |A|. \]
\end{theorem}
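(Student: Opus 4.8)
The plan is to deduce the theorem from the $L^2$-almost-periodicity result (Proposition \ref{L_2-local}) applied with $B = A^{-1}$ and with $S$ chosen so that small doubling of $A$ gives a good covering constant. First I would set up the convolution $f := 1_A * 1_{A^{-1}}$, noting that $\norm{f}_\infty = f(e) = |A|$, that $f$ is supported on $A\cdot A^{-1}$, and that $\norm{f}_1 = |A|^2$ while $\norm{f}_2^2 = E(A,A) \geq |A|^4/|AA^{-1}| \geq |A|^3/K$ by Cauchy-Schwarz and small doubling. The point is that $f$ is large on a large set: by a standard averaging argument, $\{x : f(x) \geq |A|/2K\}$ has size at least $|A|/2K$ or so, since $\norm{f}_2^2$ is big. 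Actually, a cleaner route is to observe directly that $A \subset \{x : 1_A*1_{A^{-1}}(x) = |A|\}$-type reasoning is false, so instead I would use that for each $a \in A$, the translate $aA^{-1}$ lies in $AA^{-1}$, which forces $f$ to be reasonably dense on $AA^{-1}$.

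Next, apply Proposition \ref{L_2-local} with $A \to A$, $B \to A^{-1}$, and $\epsilon$ a small constant (say $\epsilon = 1/100$), taking $S = A$ so that $|B\cdot S| = |A^{-1}A| \leq |A^{-1}AA^{-1}A|^{1/2}\cdots$; more carefully, by the Ruzsa triangle inequality and $|A^2| \leq K|A|$ one gets $|A^{-1}A| \leq K^{O(1)}|A|$, so the hypothesis $|B\cdot S| \leq K'|B|$ holds with $K' = K^{O(1)}$. This yields a set $T \subset A$ with $|T| \geq |A|/(2K')^{O(1)} = \exp(-O(\log K))|A|$ such that $\norm{f(xt) - f(x)}_2^2 \leq \epsilon^2 |A|^3$ for all $t \in TT^{-1}$. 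Since $f$ is large ($\geq c|A|/K$) on a set of size $\gtrsim |A|/K$ inside $AA^{-1}$, the $L^2$-closeness forces $f(xt)$ to also be positive at many such $x$ for every $t \in TT^{-1}$; in particular $TT^{-1} \cdot (AA^{-1}) \subset A^2 A^{-2}$ once one checks that the "bad" set where $f(xt)=0$ but $f(x)$ large has size too small to account for the $L^2$ error. Iterating: writing $S_0 := TT^{-1}$ (symmetric, contains $e$), one shows $S_0^2 \subset A^2 A^{-2}$ by the same argument with $TT^{-1}$ in place of one factor, and more generally $S_0^k \subset A^2 A^{-2}$ by induction, using at each stage that translating $f$ by an element of $S_0$ keeps it large on a large set — but here the $L^2$ errors accumulate over $k$ steps, so I would instead pass to a smaller symmetric set $S = TT^{-1}$ with $\epsilon$ chosen as a function of $k$, roughly $\epsilon \sim c/k$, giving $|S| \geq (2K')^{-O(k^2)}|A| = \exp(-K^{O(k)})|A|$ after absorbing. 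Wait — to match the stated bound $\exp(-K^{O(k)})$ one takes $\epsilon$ a small absolute constant and instead uses that $\norm{f(xt)-f(x)}_2^2 \leq \epsilon^2|A|^3$ for $t$ ranging over $TT^{-1}$ already controls $k$-fold translates by telescoping with only a factor-$k$ loss inside the $L^2$ norm (triangle inequality), which costs $\epsilon \to \epsilon/k$, i.e. $|T| \geq (2K')^{-O(k^2/\epsilon^2)}|A|$; since $K' = K^{O(1)}$ this is $\exp(-O(k^2 \log^2 K))|A| \subset \exp(-K^{O(k)})|A|$.

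The main obstacle is the last step: converting the analytic $L^2$-almost-periodicity statement into the \emph{combinatorial} containment $S^k \subset A^2 A^{-2}$. The subtlety is that $L^2$-smallness of $f(\cdot t) - f(\cdot)$ does not by itself say the supports agree — one genuinely needs $f$ to be bounded below on a set large enough that the $\ell^2$ mass of the difference cannot "hide" the drop from positive to zero. So the key quantitative input is a lower bound of the form $|\{x \in AA^{-1} : f(x) \geq \delta|A|\}| \geq \delta'|A|$ with $\delta, \delta'$ depending only on $K$ (obtainable from $\norm{f}_2^2 \geq |A|^3/K$ and $\norm{f}_\infty \leq |A|$ via a dyadic/Chebyshev argument), combined with choosing $\epsilon$ small enough relative to $\delta, \delta'$ (and $1/k$) that $\epsilon^2|A|^3 < \delta^2 |A|^2 \cdot \delta'|A|$, the minimum possible $\ell^2$-cost of killing $f$ on that set. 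Everything else — the Ruzsa calculus to bound $|A^{-1}A|$, the telescoping triangle inequality for $k$-fold translates, and tracking the final size bound — is routine, and I would organize the write-up so that Proposition \ref{L_2-local} is invoked once as a black box with these explicit parameter choices.
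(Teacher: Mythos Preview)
There is a genuine gap in your plan, and it is precisely at the point you flagged as the ``main obstacle''. Working with $f = 1_A * 1_{A^{-1}}$ and right-translation almost-periodicity (Proposition~\ref{L_2-local}), your level-set argument shows only that for each $t \in S^k$ there is some $x$ with $x \in A A^{-1}$ and $xt \in A A^{-1}$; this yields $t \in (A A^{-1})^{-1} (A A^{-1}) = A A^{-1} A A^{-1}$, not $t \in A^2 A^{-2}$. In a non-abelian group these two four-fold products are in general different sets, so the conclusion $S^k \subset A^2 A^{-2}$ does not follow. (Your intermediate claim ``$T T^{-1} \cdot (A A^{-1}) \subset A^2 A^{-2}$'' is also not what the level-set argument gives.)

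The paper's route (carried out as the proof of the stronger Theorem~\ref{core_set}) avoids this entirely and is much shorter. Take $B = A$ rather than $B = A^{-1}$, and apply the \emph{left}-translate version Proposition~\ref{L_2-local-left} with $S = A$, so that the hypothesis is simply $|S \cdot A| = |A^2| \leq K|A|$ (no Ruzsa triangle needed). Then expand algebraically:
\[
\norm{1_A*1_A(t\,\cdot) - 1_A*1_A}_2^2 = 2\bigl(E(A,A) - (1_A*1_A*1_{A^{-1}}*1_{A^{-1}})(t)\bigr).
\]
Since $E(A,A) \geq |A|^3/K$ by~\eqref{energy-CS}, choosing $\epsilon = 1/(k\sqrt{K})$ and telescoping over $S^k$ with $S = T T^{-1}$ gives $(1_A*1_A*1_{A^{-1}}*1_{A^{-1}})(t) > 0$ directly, i.e.\ $t \in A^2 A^{-2}$. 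No level-set or Chebyshev step is required, and the resulting size bound $|S| \geq \exp(-9k^2 K \log 2K)\,|A|$ is already stronger than the stated $\exp(-K^{O(k)})|A|$. If you want to salvage your level-set framework, the fix is the same: switch to $1_A*1_A$ with left translates, so that positivity of the translated function at a point of $A^2$ places $t$ in $A^2 \cdot A^{-2}$.
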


As noted in \cite{sanders:non-abelian-balog-szemeredi}, this is a variant of a result used in Tao's proof \cite{tao:solvable-freiman} of a Freiman-type theorem on the structure of sets with small doubling in solvable groups. Freiman-type results are ones that characterize subsets of groups that are group-like---more precisely subsets $A$ of a group $G$ that satisfy a small-doubling condition $|A^2| \leq K|A|$ or a small-tripling condition $|A^3| \leq K|A|$ for some fixed $K$---and there has been a concerted effort in recent years to try to establish such results in various classes of groups. In the commutative setting a rather precise and useful such characterization is provided by a theorem of Green and Ruzsa \cite{green-ruzsa:freiman} that generalizes a fundamental theorem of Freiman \cite{freiman}. In the non-commutative setting a number of interesting results have appeared recently \cite{breuillard-green1, breuillard-green2, breuillard-green-tao, FKP, hrushovski, pyber-szabo, tao:solvable-freiman}, though there is not yet a unified theory. Let us remark in the context of this paper, however, that results of the form of Theorem \ref{sanders-core} can be useful in proving such results: abelian results that find large Bohr sets in $2A-2A$ form a key step in many proofs of Freiman's theorem, and Theorem \ref{sanders-core} itself was recently used by Green, Sanders and Tao \cite{green-sanders-tao} to provide combinatorial proofs of some Freiman-type results of Hrushovski \cite{hrushovski}.

The almost-periodicity results of this paper are particularly well-suited to proving results of the form of Theorem \ref{sanders-core}, and doing so with reasonable bounds. Indeed, the following is a virtually immediate consequence of Proposition \ref{L_2-local}.

\begin{theorem}\label{core_set}
Suppose $G$ is a group, $A \subset G$ is a finite set such that $|A^2| \leq K|A|$ and $k \in \N$ is a parameter. Then there is a symmetric set $S \subset A^{-1} A$ containing the identity such that 
\[ S^k \subset A^2 \cdot A^{-2}  \text{ and }  |S| \geq \exp\left(-9k^2 K \log{2K} \right) |A|. \]
Furthermore, each element of $S^k$ has at least $|A|^3/2K$ representations as $a_1 a_2 a_3^{-1} a_4^{-1}$ with $a_i \in A$. 
\end{theorem}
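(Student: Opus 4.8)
The plan is to deduce Theorem \ref{core_set} directly from the $L^2$-almost-periodicity result, Proposition \ref{L_2-local}, applied with $B = A$. First I would set up the hypotheses: with $B = A$ we have $|A^2| = |A \cdot A| \le K|A|$, so we may take $S = A$ and apply the proposition to obtain a set $T \subset A$ with $|T| \ge |A|/(2K)^{9/\epsilon^2}$ such that $\|1_A * 1_A(xt) - 1_A * 1_A(x)\|_2^2 \le \epsilon^2 |A|^3$ for all $t \in TT^{-1}$. Taking $\epsilon = 1/(3k)$ (or something of this order) turns the exponent $9/\epsilon^2$ into $81k^2$; comparing with the stated bound $\exp(-9k^2 K \log 2K)$ suggests the intended choice is actually $\epsilon \approx 1/k$ with the constant absorbed, so I would pick $\epsilon = 1/k$ to get $|T| \ge (2K)^{-9k^2}|A| = \exp(-9k^2 \log 2K)|A|$; but the stated bound has an extra factor of $K$ in the exponent, so more likely the relevant proposition variant gives $(2K)^{9k^2 K}$ or one uses a cruder counting step — in any case the logarithmic dependence on $K$ in the exponent is what Proposition \ref{L_2-local} delivers, and I would simply track constants to land on the claimed bound. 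Set $S := T^{-1}T \cup \{1\}$ (or $TT^{-1} \cup \{1\}$ depending on whether one wants $S \subset A^{-1}A$), which is symmetric and contains the identity, and has the desired size since $|S| \ge |T|$.

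The core of the argument is to show $S^k \subset A^2 A^{-2}$ with many representations. The key observation is that $1_A * 1_A(x) = |A \cap xA^{-1}|$ counts representations of $x$ as a product of two elements of $A$, and this function is supported on $A^2$ with $\sum_x 1_A*1_A(x) = |A|^2$, so its average value on its support is at least $|A|^2/|A^2| \ge |A|/K$. More precisely, by Cauchy-Schwarz or a simple averaging, a positive proportion of the mass lies where $1_A*1_A(x) \ge |A|/2K$; let me call this the "popular" set $P = \{x : 1_A*1_A(x) > |A|/2K\}$, and one checks $\sum_{x \in P} 1_A*1_A(x) \ge |A|^2/2$, so $P$ is nonempty (indeed large). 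Now I would argue that translating by an element of $S$ moves points of $P$ to points where $1_A*1_A$ is still reasonably large: if $t \in TT^{-1}$ is an almost-period and $x \in P$, then $1_A*1_A(xt)$ cannot differ too much from $1_A*1_A(x)$ on average — but this is an $L^2$ bound over all $x$, not a pointwise bound, so I cannot directly conclude $1_A*1_A(xt)$ is large for a specific $x$.

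The right way around this is the standard "sum over a path" trick: for $t_1, \ldots, t_k \in S$, write the telescoping identity and use the triangle inequality in $L^2$ to get $\|1_A*1_A(x t_1 \cdots t_k) - 1_A*1_A(x)\|_2 \le \sum_{i} \|1_A*1_A(x t_1\cdots t_i) - 1_A*1_A(x t_1 \cdots t_{i-1})\|_2 \le k\epsilon |A|^{3/2}$ (using that $t_i \in S \subset TT^{-1}$ and that the $L^2$ norm is translation-invariant). Then for a product $s = t_1 \cdots t_k \in S^k$, one has $\|1_A*1_A(xs) - 1_A*1_A(x)\|_2^2 \le k^2\epsilon^2 |A|^3$; with $\epsilon = 1/(3k)$ say, this is at most $|A|^3/9$. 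Combined with $\|1_A*1_A\|_2^2 = E(A,A) \ge |A|^4/|A^2| \ge |A|^3/K$ being large, this forces $1_A*1_A(\cdot s)$ and $1_A*1_A(\cdot)$ to have overlapping support in a strong quantitative sense: specifically $\langle 1_A*1_A(\cdot s), 1_A*1_A \rangle \ge \|1_A*1_A\|_2^2 - \|1_A*1_A(\cdot s) - 1_A*1_A\|_2 \cdot \|1_A*1_A\|_2$, which is positive and large. But $\langle 1_A*1_A(\cdot s), 1_A*1_A\rangle = \sum_x 1_A*1_A(xs) 1_A*1_A(x) = 1_A*1_A*1_{A^{-1}}*1_{A^{-1}}(s)$ (after checking the convolution identity carefully, keeping track of inverses and left/right in the non-abelian setting), which is precisely the number of representations of $s$ as $a_1 a_2 a_3^{-1} a_4^{-1}$. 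So this inner product being $\ge |A|^3/2K$ (after choosing constants) both shows $s \in A^2 A^{-2}$ and gives the representation count.

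The main obstacle I anticipate is twofold. First, getting the non-abelian convolution bookkeeping exactly right: one must be careful that $\sum_x f(xs) g(x)$ really is $(f * \tilde g)(s)$ for the appropriate reflection $\tilde g(x) = g(x^{-1})$, and that with $f = g = 1_A*1_A$ this unwinds to counting $a_1 a_2 a_3^{-1} a_4^{-1} = s$ — the left-translate-versus-right-translate distinction in Proposition \ref{L_2-local} matters here, and I may need the left-translate variant alluded to after Corollary \ref{L_p-global}. Second, pinning down the constant so that the exponent reads exactly $9k^2 K \log 2K$: the naive path argument gives $\epsilon \sim 1/k$ hence exponent $\sim k^2 \log 2K$ with no extra $K$ factor, so either the intended inner-product lower bound needs $\epsilon \sim 1/(k\sqrt K)$ (to beat the $|A|^3/K$ lower bound on the energy with room to spare, giving the extra $K$), or one is meant to be slightly lossy; I would reverse-engineer the precise $\epsilon$ from the requirement that $k\epsilon |A|^{3/2}$ be at most, say, half of $\sqrt{E(A,A)} \ge |A|^{3/2}/\sqrt K$, which indeed forces $\epsilon \le 1/(2k\sqrt K)$ and then $9/\epsilon^2 = 36 k^2 K$ — off by a constant from the stated $9k^2 K \log 2K$, so the $\log 2K$ rather than constant must come from using $(2K)^{9/\epsilon^2} = \exp((9/\epsilon^2)\log 2K)$ directly with $\epsilon^{-2} = k^2 K$ up to a constant that I would need to optimize. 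This constant-chasing is routine but is the only genuinely fiddly part; the structural content is entirely contained in Proposition \ref{L_2-local} plus the elementary observations that $E(A,A) \ge |A|^4/|A^2|$ and that a small $L^2$-perturbation of a function with large $L^2$ norm has large inner product with the original.
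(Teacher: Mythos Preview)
Your proposal is correct and follows essentially the same route as the paper's proof: apply the $L^2$-almost-periodicity proposition with $B=S=A$ and $\epsilon = 1/(k\sqrt{K})$, telescope via the triangle inequality to reach $S^k$, and read off the representation count from the inner product $\langle 1_A*1_A(t\,\cdot), 1_A*1_A\rangle = 1_A*1_A*1_{A^{-1}}*1_{A^{-1}}(t)$ combined with the energy bound $E(A,A) \geq |A|^3/K$. Two small points where the paper is slightly cleaner: it uses the exact expansion $\norm{f(t\,\cdot)-f}_2^2 = 2\bigl(\norm{f}_2^2 - \langle f(t\,\cdot),f\rangle\bigr)$ rather than your Cauchy--Schwarz step, and it invokes the left-translate variant (Proposition \ref{L_2-local-left}) from the outset, which is exactly what is needed to land in $A^2 A^{-2}$ (and to have $S = TT^{-1} \subset A^{-1}A$) --- you correctly flagged this as the point requiring care.
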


Four-fold product sets of the above form are particularly pleasant to analyze, but it is not much harder to obtain a result that works with only triple product sets. To state this concisely it is convenient to introduce a small piece of non-standard terminology: for a triple $(A, B, C)$ of finite subsets of $G$ and an element $x \in G$ we shall say that $x$ is \emph{$\gamma$-popular} if $1_A*1_B*1_C(x) \geq \gamma (|A||B|)^{1/2} |C|$. That is, $x$ is $\gamma$-popular if it can be written as a product $abc$ with $a \in A$, $b \in B$ and $c \in C$ in at least $\gamma (|A||B|)^{1/2} |C|$ different ways. If $|A \cdot B \cdot C|$ is small then certainly there is a popular element, since
\[ |A||B||C| = \sum_{x \in A\cdot B \cdot C} 1_A*1_B*1_C(x) \leq |A\cdot B \cdot C|\ \sup_{x \in G} 1_A*1_B*1_C(x) \]
(see \S\ref{prelims}), but there are also much weaker conditions ensuring this.

\begin{theorem}\label{ABC}
Let $G$ be a group, let $A_1, A_2, A_3 \subset G$ be finite, non-empty sets and let $k \in \N$ be a parameter. Suppose $x$ is a $(1/K)$-popular element for $(A_1, A_2, A_3)$ and that there is a set $D \subset G$ such that $|A_3 \cdot D| \leq K'|A_3|$. Then there is a symmetric set $S \subset D D^{-1}$ containing the identity such that 
\[ x S^k \subset A_1  A_2  A_3 \text{ and }  |S| \geq \exp\left(-36 k^2 K^2 \log{2K'} \right) |D|. \]
\end{theorem}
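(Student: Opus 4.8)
The plan is to deduce this directly from Proposition~\ref{L_2-local}, applied to the pair $(A_2, A_3)$, via essentially the same telescoping scheme by which Theorem~\ref{core_set} follows from Proposition~\ref{L_2-local}.

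First I would apply Proposition~\ref{L_2-local} with $A_2$ and $A_3$ in the roles of $A$ and $B$, with $D$ in the role of $S$ and $K'$ in the role of $K$, and with parameter $\epsilon := 1/(2kK)$. (Note $K \geq 1$: since $x$ is $(1/K)$-popular and $1_{A_1}*1_{A_2}*1_{A_3} \leq (|A_1||A_2|)^{1/2}|A_3|$ pointwise, we have $1/K \leq 1$; hence $\epsilon \in (0,1)$ as required.) This produces $T \subset D$ with
\[ |T| \geq \frac{|D|}{(2K')^{9/\epsilon^2}} = \exp\left(-36k^2K^2\log 2K'\right)|D| \]
such that $\norm{1_{A_2}*1_{A_3}(yt) - 1_{A_2}*1_{A_3}(y)}_2^2 \leq \epsilon^2|A_2||A_3|^2$ for every $t \in TT^{-1}$. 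I would then set $S := TT^{-1}$: this is symmetric, contains the identity, lies in $DD^{-1}$, and satisfies $|S| \geq |T|$, giving the claimed size bound.

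The core of the argument is to show that $x s_1 \cdots s_k \in A_1 A_2 A_3$ for every choice of $s_1, \ldots, s_k \in S$. Writing $1_{A_1}*1_{A_2}*1_{A_3}(z) = \sum_{a \in A_1}1_{A_2}*1_{A_3}(a^{-1}z)$ and telescoping the product $s_1 \cdots s_k$ one factor at a time, the difference between the triple convolution at $x s_1 \cdots s_k$ and at $x$ is bounded, by the triangle inequality, by a sum of $k$ terms of the form $\sum_{a \in A_1}\abs{1_{A_2}*1_{A_3}((a^{-1}x p)\, s) - 1_{A_2}*1_{A_3}(a^{-1}x p)}$, where $p$ is a partial product $s_1 \cdots s_{j-1}$ and $s = s_j \in S = TT^{-1}$. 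Applying the Cauchy--Schwarz inequality in the variable $a$ and then enlarging the resulting $\ell^2$-sum to run over all of $G$, each such term is at most $|A_1|^{1/2}\norm{1_{A_2}*1_{A_3}(y s) - 1_{A_2}*1_{A_3}(y)}_2 \leq \epsilon(|A_1||A_2|)^{1/2}|A_3|$. Summing over the $k$ steps,
\[ \abs{1_{A_1}*1_{A_2}*1_{A_3}(x s_1 \cdots s_k) - 1_{A_1}*1_{A_2}*1_{A_3}(x)} \leq k\epsilon\,(|A_1||A_2|)^{1/2}|A_3| = \tfrac{1}{2K}(|A_1||A_2|)^{1/2}|A_3|. \]

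To finish, I would invoke the hypothesis that $x$ is $(1/K)$-popular, i.e.\ $1_{A_1}*1_{A_2}*1_{A_3}(x) \geq \tfrac1K(|A_1||A_2|)^{1/2}|A_3|$, so the last display forces $1_{A_1}*1_{A_2}*1_{A_3}(x s_1 \cdots s_k) \geq \tfrac{1}{2K}(|A_1||A_2|)^{1/2}|A_3| > 0$; thus $x s_1 \cdots s_k$ admits a representation as a product of an element of $A_1$, one of $A_2$ and one of $A_3$, which is precisely the assertion $x S^k \subset A_1 A_2 A_3$. The genuinely delicate points are bookkeeping ones: keeping track of left- versus right-translates so that the right-translation built into Proposition~\ref{L_2-local} lines up with the telescoping (this is why inside $1_{A_2}*1_{A_3}$ the translated element $a^{-1}x p$ sits to the left of $s$), and checking that the Cauchy--Schwarz step over $A_1$ yields exactly the factor $|A_1|^{1/2}$ needed to match the $(|A_1||A_2|)^{1/2}|A_3|$ normalization in the definition of popularity. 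Everything else is a routine substitution of parameters.
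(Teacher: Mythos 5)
Your proposal is correct and takes essentially the same approach as the paper: applying Proposition~\ref{L_2-local} with $A=A_2$, $B=A_3$, $S=D$, $K'=K$, $\epsilon=1/(2kK)$, setting $S:=TT^{-1}$, and finishing with a telescoping argument combined with Cauchy--Schwarz over $A_1$ and the popularity of $x$. The only (cosmetic) difference is order of operations: the paper first telescopes the $L^2$ bound from $TT^{-1}$ to $S^k$ via the triangle inequality and then applies Cauchy--Schwarz once, whereas you apply Cauchy--Schwarz at each of the $k$ telescoping steps and sum; both yield the same final bound $(|A_1||A_2|)^{1/2}|A_3|/2K$.
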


In the abelian setting the non-local version of this result is in the same vein as a result of Freiman, Halberstam and Ruzsa \cite{freiman-halberstam-ruzsa} that finds long arithmetic progressions or Bohr sets in $A+A+A$ (see also \cite[Theorem 4.43]{tao-vu}); the best bounds currently known in this direction are due to Sanders \cite{sanders:longAPs}.

For the product of two sets the situation looks rather different, a phenomenon that has been observed in many different contexts. Whereas we cannot ensure that we can find a translate of a large iterated product set in $A \cdot B$, it turns out that we can always find a translate of any small subset of a large iterated product set.

\begin{theorem}\label{AB-struct}
Let $G$ be a group, let $A, B \subset G$ be finite, non-empty subsets and let $k, n \in \N$ be parameters. Suppose $|A \cdot B| \leq K|A|$ and $|B \cdot D| \leq K'|B|$. Then there is a symmetric set $S \subset D D^{-1}$ of size 
\[ |S| \geq \exp\left(-150 k^2 K \log{2K'} \log{2n} \right)|D| \]
such that the product set $A \cdot B$ contains a left-translate of any set $P \subset S^k$ of size at most $n$.
\end{theorem}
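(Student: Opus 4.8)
The plan is to use the $L^p$-almost-periodicity result (Proposition \ref{L_p-local}) applied to the pair $(A, B)$ with the set $D$ playing the role of $S$. First I would invoke that proposition with a suitable choice of $\epsilon$ (roughly $\epsilon \sim 1/(kK)$, up to constants) and a suitable choice of $m$ (roughly $m \sim \log 2n$, so that an $L^{2m}$ bound is strong enough to control $n$ points simultaneously via a union-bound/$\ell^\infty$-type argument). This produces a large set $T \subset D$ with $|T| \geq |D|/(2K')^{50m/\epsilon}$, and after substituting the chosen values of $\epsilon$ and $m$ this should give exactly the stated bound $|S| \geq \exp(-150 k^2 K \log 2K' \log 2n)|D|$ with $S = TT^{-1}$ (which is automatically symmetric and contains the identity); I would keep track of constants to confirm the factor $150$.

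The core of the argument is then a telescoping/iteration step. Given a point $p = t_1 t_2 \cdots t_k \in S^k$ with each $t_i \in TT^{-1}$, I want to show that some left-translate $gp$ lies in $AB$, i.e. that $1_A*1_B(gp) > 0$; and more generally that for any set $P \subset S^k$ of size at most $n$, a single translate $g$ works for all $p \in P$ simultaneously. The idea is that translating $1_A*1_B$ by each $t_i$ in turn changes it only a little in $L^{2m}$, so translating by the whole product $t_1\cdots t_k$ changes it by at most $k$ times as much (by the triangle inequality in $L^{2m}$, applied along the telescoping chain $1_A*1_B(x\,t_1\cdots t_k) - 1_A*1_B(x) = \sum_{i} [1_A*1_B(x\,t_1\cdots t_i) - 1_A*1_B(x\,t_1\cdots t_{i-1})]$, using that each summand is a translate of one of the basic almost-period differences). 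This gives an $L^{2m}$ bound of order $(k\epsilon)^{2m}\cdot(\text{stuff})$ on $1_A*1_B(x p) - 1_A*1_B(x)$ for each individual $p \in P$; summing these over the at most $n$ points of $P$ and over $x$, and comparing with $\norm{1_A*1_B}_{2m}^{2m}$ (which is bounded below since $\norm{1_A*1_B}_1 = |A||B|$ while the function is supported on $AB$, so by Hölder $\norm{1_A*1_B}_{2m}^{2m} \geq (|A||B|)^{2m}/|AB|^{2m-1} \geq (|A||B|)^{2m}/(K|A|)^{2m-1}$), one concludes that there must exist at least one $x = g^{-1}$ for which $1_A*1_B(g^{-1} xp)$ — rephrasing, for which the translated convolution is nonzero at every point of $P$ at once. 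The quantitative balancing here is what forces the choices $\epsilon \sim 1/(kK)$ and $m \sim \log 2n$: one needs $n (k\epsilon)^{2m} K^{2m}$ to be comfortably less than $1$ (say at most $1/2$), and with $\epsilon K k$ a small constant this holds once $2^{2m} \geq 2n$, i.e. $m \gtrsim \log 2n$.

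The step I expect to be the main obstacle is making the union-bound over $P$ interact correctly with the $\max(\cdot,\cdot)$ appearing on the right-hand side of Proposition \ref{L_p-local}: one has to check that after telescoping, the relevant error term is genuinely dominated by $\norm{1_A*1_B}_m^m$ (the second term in the max) rather than by $\epsilon^m|AB||B|^m$, or handle both cases, and then verify that $n$ times this error is still small compared to the $L^{2m}$-mass of $1_A*1_B$ computed via the lower bound above. Getting the exponents to line up — the $2m$ in the almost-periodicity norm against the $m$'s floating around in the max, and the factor $k$ from telescoping entering as $k^2$ after being squared inside a $(\cdot)^{2m}$ and then logarithmically absorbed — requires some care but is routine once the right $\epsilon$ and $m$ are fixed. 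A minor additional point is to note that $S^k \subset (DD^{-1})^k$ so that the hypothesis $|B \cdot D| \leq K'|B|$ is the only covering assumption needed on the $D$ side, and that the translate found is a left-translate because the almost-periods $t$ act on the right of the argument of $1_A*1_B$.
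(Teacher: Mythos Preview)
Your plan is correct and matches the paper's argument (given there as the proof of the slightly more general Theorem~\ref{AB-struct-full}): apply Proposition~\ref{L_p-local} with $m=\log 2n$, telescope to $S^k$, and argue by contradiction against the lower bound $\|1_A*1_B\|_{2m}^{2m}\geq\|1_A*1_B\|_m^{2m}/|AB|$. One small correction to your parameter choice: you need $\epsilon\sim 1/(k^2K)$ rather than $\epsilon\sim 1/(kK)$. The paper defines $\gamma$ by $\|1_A*1_B\|_m^m=\gamma^m|AB||B|^m$ (so that $\gamma\geq 1/K$ under the hypothesis $|AB|\leq K|A|$) and takes $\epsilon=\gamma/(ek^2)$, which forces the second branch of the $\max$ in Proposition~\ref{L_p-local} to be the relevant one; the contradiction inequality then reads $nk^{2m}\epsilon^m<\gamma^m$, and since telescoping contributes $k^{2m}$ while $\epsilon$ enters only to the power $m$ in this branch, one needs the extra factor of $k$ in $1/\epsilon$. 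This is precisely what produces the $k^2$ in the final exponent, with $50m/\epsilon\leq 50ek^2K\log 2n$ giving the constant $150$.
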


This theorem is a straightforward consequence of the $L^p$-almost-periodicity of $1_A*1_B$ given by Proposition \ref{L_p-local}. Our next application restricts this result to subsets of $\{1, \ldots, N\}$.

\subsubsection*{Arithmetic progressions in sumsets $A+B$}
Coupled with a `structure-generation' lemma that finds arithmetic progressions in iterated sumsets $kS$, Theorem \ref{AB-struct} quickly yields the following.

\begin{theorem}\label{A+B}
Let $N$ be a positive integer and let $A, B \subset [N]$ be non-empty sets of sizes $\alpha N$, $\beta N$. Then $A+B$ contains an arithmetic progression of length at least
\[ \tfrac{1}{2}\exp\left( c \left(\frac{\alpha \log{N}}{\log{4/\beta}} \right)^{1/4} \right), \]
where $c > 0$ is an absolute constant.
\end{theorem}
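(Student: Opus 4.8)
The strategy is to combine Theorem~\ref{AB-struct} with a `structure-generation' lemma that extracts long arithmetic progressions from iterated sumsets $kS$ of a large set $S \subset DD^{-1}$. In the integer setting we take $G = \Z$, $B \subset [N]$ with $|B| = \beta N$, and $D = [N]$, so that $|B + D| \leq |[-N, 2N]| \leq 3N \leq (4/\beta)|B|$, giving $K' = 4/\beta$; similarly $|A + B| \leq 2N \leq (2/\alpha)|A|$, so $K = 2/\alpha$. Theorem~\ref{AB-struct} then produces a symmetric set $S \subset D - D \subset [-N,N]$ of size at least $\exp(-150 k^2 K \log(2K') \log(2n))\, N$ such that $A+B$ contains a translate of any $P \subset kS$ with $|P| \leq n$. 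So it suffices to guarantee that $kS$ itself contains a long arithmetic progression, and then take $P$ to be a truncation of that progression to $n$ terms; the final progression length will be roughly $\min(n, \text{length of the AP in } kS)$, and one optimizes the parameters $k$ and $n$ at the end.

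**The structure-generation step.** The key auxiliary fact is that if $S \subset [-N,N]$ has size $|S| = \sigma N$ (here $\sigma$ will be roughly $\exp(-C k^2 K \log(2K')\log(2n))$), then for a suitable $k$ the sumset $kS$ contains an arithmetic progression of length at least something like $(\text{const}) \cdot \sigma^{1/k} k \cdot (\text{a factor})$, or more to the point a progression whose length grows like a power of $N$ once $k$ is chosen well. The cleanest route is: $S$ has a nonzero element $d$, and by a pigeonhole/dilation argument (or by a Plünnecke-type or covering argument applied inside $[-N,N]$), one shows $kS \supset$ an AP with common difference $d$ and length comparable to $k \cdot |S| / N$ up to constants, after possibly passing to a dilate. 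Concretely, one uses that $k$-fold sumsets of dense-in-an-interval sets contain long APs — a clean statement of this type is that if $S \subset \{0, 1, \dots, L\}$ with $|S| \geq \delta L$ then $kS$ contains an AP of length $\geq \delta k$ (for $k$ not too large relative to $1/\delta$), which follows by a greedy/iteration argument on the longest AP in $jS$. Applying this with $L \asymp N$, $\delta \asymp \sigma$, we get an AP of length $\asymp k\sigma$ inside $kS$; this is short, so instead we should iterate sumsets more aggressively, replacing $S$ by $\ell S$ before applying Theorem~\ref{AB-struct} — but since Theorem~\ref{AB-struct} already allows $k$-fold sums inside it, we instead just pick $k$ large. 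With $k \asymp \log N / (K \log(2K') \log(2n))^{?}$ chosen to balance $\sigma^{1/k}$ against the loss, the AP length in $kS$ becomes a genuine power of $N$, and the quadruple-root exponent $(\alpha \log N / \log(4/\beta))^{1/4}$ emerges from optimizing $k$ against $n$ in $\exp(-150 k^2 K \log(2K') \log(2n))$ with $K = 2/\alpha$, $K' = 4/\beta$: one sets $k^2 \cdot (1/\alpha) \cdot \log(1/\beta) \cdot \log n \asymp \log N$ and then the progression has length $\asymp n \asymp \exp(\text{that quantity}/k)$, and the symmetric choice $\log n \asymp k$ leads to $k \asymp (\alpha \log N / \log(4/\beta))^{1/4}$ after balancing.

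**Main obstacle.** The delicate part is the structure-generation lemma and the bookkeeping of how long an AP survives the three reductions: (i) finding the AP inside $kS$, (ii) truncating it to $n$ terms so Theorem~\ref{AB-struct} applies, and (iii) optimizing $k$ and $n$ against the exponential loss $\exp(-150 k^2 K \log(2K') \log(2n))$ in the size of $S$ — one needs $S$ nonempty, i.e.\ $|S| \geq 1$, which forces $150 k^2 K \log(2K')\log(2n) \lesssim \log N$, and one wants the resulting AP as long as possible subject to this. The quartic exponent is essentially dictated by the fact that the loss is quadratic in $k$ while the gain ($n$, or the AP length) is exponential in something like $\log N / k$ divided by another factor of $k$ coming from $\log n \asymp k$: balancing $k^4 \asymp \log N \cdot (\text{the small constants})$ gives the $1/4$. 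So the main work is choosing parameters carefully and supplying (or citing) the lemma that $kS$ for $S$ of density $\sigma$ in $[-N,N]$ contains an AP of length at least a fixed power of $N$ when $k$ is taken of order $\log(1/\sigma)$ or so; this is standard (a covering-style argument, or an application of results on APs in sumsets), but the exact quantitative form must be matched to the target bound.
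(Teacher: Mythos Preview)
Your overall architecture is exactly the paper's: apply Theorem~\ref{AB-struct} (in fact the slightly more general Theorem~\ref{AB-struct-full}) with $S=[N]$, $K\asymp 1/\alpha$, $K'\asymp 1/\beta$, to get a dense $T\subset[N]$ such that $A+B$ contains a translate of any $n$-element subset of $kT-kT$, and then invoke a structure-generation lemma to find a length-$n$ arithmetic progression inside $kT-kT$. The parameter choice $\log n\asymp k$ and the resulting exponent $1/4$ are also right in spirit.

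The genuine gap is the structure-generation step. The concrete statements you offer---that a set of density $\delta$ in $\{0,\ldots,L\}$ has $kS$ containing an AP of length about $\delta k$, or $k|S|/N$---are \emph{far} too weak: they give progressions polynomial in $k$, whereas you need progressions of length $\asymp 2^k$ to make the optimisation work. The paper's Lemma~\ref{local-sarkozy} supplies exactly this: if $T\subset[N]$ has density $\delta>2^{3k/2}N^{-1/(k+1)}$, then $kT-kT$ contains an AP of length $2^{k+1}$. The proof is not a covering or greedy argument but a dyadic pigeonhole: one looks at the map $(a,x_1,\ldots,x_k)\mapsto(x_1-2a,\ldots,x_k-2^k a)$ from $T^{k+1}$ to a box of size roughly $N^k$, finds a collision giving $2^j(a-b)\in T-T$ for $j=1,\ldots,k$, and then uses binary expansion to conclude $[-2^k,2^k]\cdot(a-b)\subset kT-kT$. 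This is the key idea you are missing, and it is not quite standard.

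This also affects your bookkeeping. The operative constraint is not ``$|S|\geq 1$'' (which would give $k^2\cdot(1/\alpha)\log(1/\beta)\cdot k\lesssim\log N$ and hence an exponent $1/3$) but the density hypothesis of Lemma~\ref{local-sarkozy}, namely $\log(1/\delta)\lesssim(\log N)/k$. With $\log(1/\delta)\asymp k^2(1/\alpha)\log(1/\beta)\log n$ and $\log n\asymp k$ this becomes $k^3(1/\alpha)\log(1/\beta)\lesssim(\log N)/k$, i.e.\ $k^4\lesssim\alpha\log N/\log(1/\beta)$, which is where the $1/4$ actually comes from. So: right strategy, but the lemma you need is sharper and more specific than what you wrote, and once you have it the constraint you must verify is its density hypothesis rather than mere nonemptiness.
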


Results of this form have a rich history, starting with the paper \cite{bourgain:longAPs} of Bourgain. There it was shown, using a very insightful and sophisticated manipulation of sets of Fourier coefficients in the group $\Zmod{p}$, that if $A$ and $B$ are subsets of $[N]$ of densities $\alpha$ and $\beta$ then $A+B$ must contain an arithmetic progression of length at least
\begin{equation} \exp\left(c \left((\alpha \beta\log{N})^{1/3} - \log\log{N}\right)\right) \label{bourgain_bound} \end{equation}
for some absolute constant $c > 0$. This bound was improved by Green \cite{green:longAPs} using a different Fourier-analytic argument to the best bound that is currently known for high-density sets, increasing the exponent $1/3$ above to $1/2$; a similar bound has since also been established by Sanders \cite{sanders:longAPs} using another Fourier-analytic technique. By contrast, our result yields somewhat shorter arithmetic progressions for high-density sets (where $\alpha$ and $\beta$ are thought of as not depending on $N$) but is also able to deal with sets that are much smaller than previously possible. Whereas the previous bounds for the length of the arithmetic progressions one can find in $A+B$ are only non-trivial provided $\alpha \beta \geq C (\log \log N)^2/\log N$ for some absolute constant $C$, Theorem \ref{A+B} requires only $\alpha (\log{4/\beta})^{-1} \geq C/\log N$. Thus, whereas at least one of the sets had to have density at least $C \log \log N/(\log N)^{1/2}$ with previous bounds, the above theorem allows us to deal with pairs of sets each of which may have density as low as $C \log \log N/\log N$. In fact, one of the sets may have density as low as $\exp\left(-(\log N)^c \right)$, which illustrates a significant difference between our results and the Fourier-analytic ones. Our proof also adds another novelty: we are able to work directly in the group $\Z$, never needing to embed the sets in a group $\Zmod{p}$ (as is typical). We are also able to give a local version of the result; we present this and the proofs in \S\ref{longAPs}.

\subsubsection*{Roth's theorem}
Our next application concerns the quantity $r_3(N)$, the largest size of a subset of the integers $\{1,\ldots,N\}$ that is free from non-trivial three-term arithmetic progressions---that is, triples $(x,x+d,x+2d)$ with $d \neq 0$. As a consequence of our probabilistic proof of Proposition \ref{L_2-local} we are able to establish the following version of Roth's theorem \cite{roth} by completely combinatorial means.

\begin{theorem}\label{roths_theorem}
There is a function $\omega$ with $\omega(N) \to \infty$ as $N \to \infty$ such that 
\[ r_3(N) \leq \frac{N}{(\log\log{N})^{\omega(N)}} \]
for any positive integer $N$.
\end{theorem}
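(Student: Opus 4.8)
The plan is to establish the contrapositive in quantitative form: if $A\subseteq[N]$ contains no nontrivial three-term progression and $|A|=\alpha N$, then $\alpha$ must be small compared with $N$. We argue by a density-increment iteration in which the only non-combinatorial step of the classical argument---the extraction of a large Fourier coefficient---is replaced by an application of Proposition~\ref{L_2-local}. One can either reduce to $\Z/p\Z$ with $p$ prime in the usual way, or, as the local form of our results permits, work directly in $\Z$. Set $g:=1_A*1_A$. The basic identity is that $\sum_b 1_A(b)\,g(2b)$ counts the three-term progressions $(a,b,a')$ with $a,a'\in A$ and midpoint $b\in A$, so the progression-free hypothesis gives $\sum_b 1_A(b)\,g(2b)=|A|$. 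Since $g\geq0$, $\sum_x g(x)=|A|^2$ and $\|g\|_\infty\leq|A|$, a $g$ that were spread out would make this sum of order $|A|\cdot|A|^2/N=\alpha^3N^2$; once $\alpha^2N$ is large this contradicts the value $|A|$, so $g$ is forced to be very far from equidistributed on the dilate $2\cdot A$.

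First I would apply Proposition~\ref{L_2-local} (or its global form, Corollary~\ref{L_2-global}) with both sets taken to be $A$ and with $S=[N]$, so that $K=2/\alpha$: this yields a set $T\subseteq[N]$ with $|T|\geq N(\alpha/4)^{9/\epsilon^2}$ such that $\|g(\cdot+t)-g\|_2^2\leq\epsilon^2|A|^3$ for every $t\in T-T$. Telescoping over sums of such periods gives $\|g(\cdot+t)-g\|_2\leq k\epsilon|A|^{3/2}$ for every $t$ in the $k$-fold sumset $k(T-T)$. Next I would feed the large set $T$ into the structure-generation lemma of \S\ref{longAPs} (the same device that powers Theorem~\ref{A+B}) to locate a genuine arithmetic progression $Q=\{0,r,2r,\dots,(L-1)r\}$ lying inside a bounded iterated sumset of $T-T$. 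For the uniform probability measure $\mu_Q$ on $Q$ this gives that $\Phi:=g*\mu_Q$, i.e.\ $\Phi(x)=\tfrac1L\sum_{j=0}^{L-1}g(x-jr)$, is $L^2$-close to $g$, and moreover $\Phi$ is almost invariant under the single shift by $r$, so it is genuinely `smooth' along $r$-progressions.

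The heart of the matter is then to convert this smoothness into a density increment. Since $\langle 1_{2\cdot A},\Phi\rangle$ differs from $\langle 1_{2\cdot A},g\rangle=|A|$ by at most $\|1_{2\cdot A}\|_2\,\|\Phi-g\|_2$, the inner product $\langle 1_{2\cdot A},\Phi\rangle$ is still only about $|A|=\alpha N$, whereas $\sum_x\Phi(x)=|A|^2$ makes its `expected' value about $\alpha^3N^2$; thus $1_{2\cdot A}$ is strongly anticorrelated with the nonnegative smooth function $\Phi$. Breaking $[N]$ into the $r$-progressions it contains, on each of which $\Phi$ is essentially constant, this anticorrelation forces $2\cdot A$---and hence $A$, since dilation by $2$ carries progressions to progressions---to have density strictly larger than $\alpha$, by an amount controlled in terms of $\alpha$ and $r$, on one of those sub-progressions; after undoing the dilation this is a genuine arithmetic progression inside $[N]$. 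Balancing the length $L$ of $Q$, its common difference $r$, and the size of the resulting increment is the delicate point here.

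Finally one iterates: at each stage $A$ lives inside a sub-progression, which one rescales to $[N']$ for an $N'$ determined by the parameters of the structure-generation lemma, and the density strictly increases; after the appropriate number of steps the density would exceed $1$, which is absurd, \emph{provided} $N$ was large enough for all the intermediate progressions to be nonempty. Choosing $\epsilon$ suitably at each stage and carefully accounting for how the progression length degrades gives an upper bound of the shape $r_3(N)\leq N/(\log\log N)^{\omega(N)}$ with $\omega(N)\to\infty$; since this target bound is comparatively weak, the iteration can afford to be rather lossy. \textbf{The main obstacle} is exactly the increment-extraction step: passing from ``$g$ is $L^2$-close to a function that is smooth along an arithmetic progression'' to a density increment whose quality and whose progression length are good enough for the iteration to close---this is where the classical proof profits from the cleaner dichotomy provided by a single large Fourier coefficient, and where the present argument has to work harder.
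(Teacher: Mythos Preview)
Your plan agrees with the paper's argument through the first two stages: one applies Proposition~\ref{L_2-local} with $B=A$ and $S=[N]$ to get a large set $T$ of $L^2$-almost-periods for $g=1_A*1_A$, and then feeds $T$ into the structure-generation lemma of \S\ref{structure-generation} to find a genuine progression $P$ of almost-periods. The divergence is at the increment-extraction step, and here your proposal has a real gap while the paper does something different and cleaner.

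Your idea is to smooth $g$ once, forming $\Phi=g*\mu_Q$, and then to read off an increment from the anticorrelation $\langle 1_{2\cdot A},\Phi\rangle\approx|A|\ll\alpha^3N^2$ by partitioning into $r$-progressions on which $\Phi$ is nearly constant. The difficulty you flag is genuine: anticorrelation with a nonnegative weight tells you that $2\cdot A$ is \emph{under}-represented where $\Phi$ is large; mass conservation then says $2\cdot A$ is over-represented where $\Phi$ is small, but those pieces carry little weight and the increment one extracts this way is not obviously of usable quality. One can push an $L^1$-type argument through (bounding $\sum_i|\beta_i-\beta|$ via $\|\Phi\|_\infty$), but the bookkeeping for the $2\cdot A$ versus $A$ dilation, the accumulated error in ``$\Phi$ constant on $P_i$'', and the resulting increment size are all left unspecified, and this is exactly the point you identify as the main obstacle.

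The paper sidesteps all of this. Instead of smoothing $g$, it smooths $1_A$ itself: one shows (Proposition~\ref{T3-approximation}) that $|T_3(1_A*\mu_P)-T_3(1_A)|\leq\epsilon|A|^2$, so the \emph{full} three-term count for the function $1_A*\mu_P$ is tiny. Now one argues by contradiction: if $1_A*\mu_P(x)\leq\delta^{-1}\alpha$ for every $x$, then $f:=(\delta/\alpha)\,1_A*\mu_P$ is $[0,1]$-valued with average about $\delta$ and $T_3(f)$ small, and a quantitative Varnavides lemma (Lemma~\ref{varnavides_thm}) lower-bounds $T_3(f)$ for any bounded function of given average, giving a contradiction. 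Hence some $x$ has $|A\cap(x-P)|>\delta^{-1}\alpha|P|$, which \emph{is} the density increment, directly and without any correlation-to-increment conversion. The missing idea in your sketch is precisely this use of Varnavides on the smoothed function $1_A*\mu_P$ to turn ``few progressions'' into ``large sup of $1_A*\mu_P$''.
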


This bound for $r_3$ is marginally stronger than Roth's original $r_3(N) \ll N/\log\log{N}$, the beautiful Fourier-analytic proof of which has become a model argument in additive combinatorics. Subsequent Fourier-analytic arguments have demonstrated better bounds for $r_3$: the best bound currently known is due to Bourgain, who in \cite{bourgain} established that 
\[ r_3(N) \ll \frac{(\log\log N)^2}{(\log N)^{2/3}} N. \]
Roth's theorem has enjoyed many different proofs, including non-Fourier-analytic ones, and each new proof has typically offered a slightly different perspective on the problem. However, only the Fourier-analytic proofs seem to have given decent bounds: the methods that have not used Fourier analysis have generally been accompanied by tower-type bounds, establishing only that 
\[ r_3(N) \ll N /\log^*{N}; \]
see \cite[Chapter 10]{tao-vu} for references, as well as the more recent work \cite{polymath}. (The iterated logarithm of $N$, $\log^*{N}$, is defined to be the number of times it is necessary to take the logarithm of $N$ in order to get a number less than or equal to $1$, and thus grows extremely slowly.) It is therefore perhaps of interest that our method manages to give bounds of a similar quality to the Fourier-analytic proofs despite not using Fourier analysis. We give the proof of Theorem \ref{roths_theorem} in \S\ref{roth}.

\subsubsection*{Strong approximate groups}
We present one final application of the probabilistic technique: the following result says that products of certain `group-like' sets must have strong almost-periodicity properties.

\begin{proposition}\label{discontinuous}
Let $A$ be a finite subset of a group and let $\epsilon \in (0,1)$. Suppose $A$ has the property that every $x \in A^2$ has at least $|A|/K$ representations as $ab$ with $a, b \in A$. Then there is symmetric set $S \subset A^{-1}A$ of size
\[ |S| \geq \exp\left(-K^2 \log{2K} \log{8/\epsilon} \right)|A| \]
such that, for each $t \in S$,
\[ | tA^2 \bigtriangleup A^2 | \leq \epsilon|A^2|. \]
\end{proposition}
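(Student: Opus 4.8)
The plan is to deduce this from the $L^p$-almost-periodicity result, Proposition \ref{L_p-local}, applied with $B = A$ and the set $S$ there taken to be $A$ itself. First I would record the two consequences of the hypothesis that every $x \in A^2$ has at least $|A|/K$ representations as $ab$: on the one hand this gives $|A^2| \leq K|A|$, so we may take $K' = K$ and $D = A$ in Proposition \ref{L_p-local} (since $|A \cdot A| \leq K|A|$); on the other hand it gives a pointwise lower bound $1_A*1_A(x) \geq (|A|/K) 1_{A^2}(x)$, which will be what converts an $L^{2m}$ bound on the difference $1_A*1_A(xt) - 1_A*1_A(x)$ into a bound on $|tA^2 \bigtriangleup A^2|$. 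Indeed, if $t$ lies in the translate set produced by the proposition and $x \in A^2 \bigtriangleup t^{-1}A^2$ --- equivalently $xt$ is in exactly one of $A^2$ and the shift of $A^2$ --- then one of $1_A*1_A(xt)$, $1_A*1_A(x)$ is $0$ while the other is $\geq |A|/K$, so $|1_A*1_A(xt) - 1_A*1_A(x)| \geq |A|/K$. Hence $|tA^2 \bigtriangleup A^2| = |A^2 \bigtriangleup t^{-1}A^2| \leq (K/|A|)^{2m} \norm{1_A*1_A(xt) - 1_A*1_A(x)}_{2m}^{2m}$.

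Next I would bound the right-hand side of the almost-periodicity estimate. With $A = B$, Proposition \ref{L_p-local} gives, for a parameter $\delta \in (0,1)$ in place of its $\epsilon$,
\[ \norm{1_A*1_A(xt) - 1_A*1_A(x)}_{2m}^{2m} \leq \max\bigl(\delta^m |A^2| |A|^m, \norm{1_A*1_A}_m^m\bigr)\, \delta^m |A|^m. \]
Both terms inside the maximum are at most $|A^2| |A|^{2m} \leq K|A|^{2m+1}$: for the first this is immediate from $\delta < 1$ and $|A^2| \leq K|A|$, and for the second one uses the crude bound $1_A*1_A \leq |A|$ together with $|\mathrm{supp}(1_A*1_A)| = |A^2| \leq K|A|$, giving $\norm{1_A*1_A}_m^m \leq |A^2| |A|^m \leq K|A|^{m+1} \leq K|A|^{2m+1}$. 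Therefore the displayed quantity is at most $\delta^m K |A|^{3m+1}$, and so
\[ |tA^2 \bigtriangleup A^2| \leq (K/|A|)^{2m}\, \delta^m K |A|^{3m+1} = K^{2m+1}\, \delta^m\, |A|^{m+1}. \]
Since $|A^2| \geq |A|$ we have $|A|^{m+1} \leq |A^2|^{m+1}$, but it is cleaner to compare against $|A^2|$ directly: $|A|^{m+1} \le |A|^m |A^2|$, so the bound is $K^{2m+1} \delta^m |A|^m |A^2|$. Now choose $\delta = 1/(2K^2|A|)$ — wait, this makes $\delta$ depend badly on $|A|$; instead I would be more careful and absorb the $|A|^m$ by noting we actually want to compare $|tA^2\triangle A^2|$ with $\epsilon|A^2|$, which we can arrange by picking $m$ and $\delta$ appropriately. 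Concretely, take $m$ a fixed small integer (say $m=1$) does not suffice because of the $|A|^m$ factor; the correct route is to recheck the $\norm{1_A*1_A}_m^m$ term using the sharper bound $\norm{1_A*1_A}_m^m \le |A^2|\cdot\bigl(\max 1_A*1_A\bigr)^m$ is still $|A|^m$, so the floor of $\epsilon^m|A^2||A|^m$ in the proposition is the binding constraint and gives exactly $\delta^m|A^2||A|^m$ on the right after the first term dominates. Matching powers, $|tA^2\triangle A^2| \le K^{2m}\delta^m|A^2|$, so choosing $\delta = \epsilon/(2K^2)$ and any $m \ge 1$ yields $|tA^2\triangle A^2| \le (\epsilon/2)^m|A^2| \le \epsilon|A^2|$.

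The size of the translate set $T \subset A$ is $|T| \geq |A|/(2K)^{50m/\delta}$, and $TT^{-1} \subset A^{-1}A$ inverted, i.e.\ $S := TT^{-1}$ is symmetric, contains the identity, and is contained in $A^{-1}A$ (using $T \subset A$, so $TT^{-1} \subset AA^{-1}$; passing to $T^{-1}T \subset A^{-1}A$ via the left-translate variant mentioned after Corollary \ref{L_p-global} gives the stated containment, or one simply notes the statement is symmetric under $A \leftrightarrow A^{-1}$). With $\delta = \epsilon/(2K^2)$ and $m = 1$ we get $|S| \geq |A|/(2K)^{100 K^2/\epsilon}$, which is of the form $\exp(-K^2 \log 2K \cdot \log(8/\epsilon))$ after adjusting constants. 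The main obstacle is bookkeeping: one must verify that the $\max$ in Proposition \ref{L_p-local} really is controlled by $K|A|^{2m+1}$ as claimed, and track the dependence on $m$ versus $\epsilon$ so that the final exponent comes out as $K^2 \log 2K \log(8/\epsilon)$ rather than something with a worse $\epsilon$-dependence; this is purely a matter of choosing the free parameters $\delta$ and $m$ in Proposition \ref{L_p-local} optimally (essentially $\delta \asymp \epsilon/K^2$, $m \asymp 1$), and no new idea beyond the pointwise lower bound $1_A*1_A \geq (|A|/K)1_{A^2}$ is needed.
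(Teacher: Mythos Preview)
Your route via Proposition~\ref{L_p-local} is natural, and the key observation---that $|1_A*1_A(xt)-1_A*1_A(x)| \geq |A|/K$ whenever $x$ lies in the relevant symmetric difference, so that $|tA^2 \bigtriangleup A^2| \leq (K/|A|)^{2m}\norm{1_A*1_A(\cdot\,t)-1_A*1_A}_{2m}^{2m}$---is exactly right. But there is a genuine gap in the parameter choice. With $m=1$ and $\delta = \epsilon/(2K^2)$ you obtain $|T| \geq |A|/(2K)^{100K^2/\epsilon} = \exp\bigl(-100K^2(\log 2K)/\epsilon\bigr)|A|$, which is \emph{not} of the form $\exp(-K^2\log 2K\log(8/\epsilon))|A|$ ``after adjusting constants'': the $\epsilon$-dependence is $1/\epsilon$ rather than $\log(1/\epsilon)$, and these are not interchangeable. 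To recover the logarithmic shape you must instead take $\delta \asymp 1/K^2$ constant and $m \asymp \log(1/\epsilon)$, so that $(K^2\delta)^m \leq \epsilon$; this gives an exponent $50m/\delta$ of order $K^2\log(1/\epsilon)$, but with a multiplicative constant of at least $100$. So even the corrected black-box argument yields only $|S| \geq \exp(-cK^2\log 2K\log(1/\epsilon))|A|$ with $c \gtrsim 100$, not the constant $1$ asserted in the proposition.

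The paper does not go through Proposition~\ref{L_p-local} at all; it remarks explicitly in \S\ref{discts} that this result does not follow directly from the almost-periodicity propositions. Instead it reruns the random-sampling idea from scratch with a sharper concentration input: for a random $k$-subset $C \subset A$ and $x \in A^2$, the event $x \notin CA$ forces $|\mu_C*1_A(x)-1_A*1_A(x)| \geq |A|/K$, and a hypergeometric tail bound gives $\P(x \notin CA) \leq 2e^{-2k/K^2}$ directly. Summing over $x \in A^2$, applying Markov, and then the translate-counting step from the proof of Proposition~\ref{L_2-local-left} produces $T \subset A^{-1}$ of size $|A|/(2K)^k$ with $|A^2 \bigtriangleup tCA|$ small for all $t \in T$; the triangle inequality and the choice $k = \lceil \tfrac{1}{2}K^2\log(8/\epsilon)\rceil$ then give the stated bound. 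The point is that an exponential tail inequality is tighter here than the $L^{2m}$-moment route through Lemma~\ref{hyper_moments}, which is where the factor $50$ in Proposition~\ref{L_p-local} originates.
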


Green \cite{green:luczak-schoen} has suggested that one might call sets $A$ that satisfy the hypothesis of this proposition \emph{strong $K$-approximate groups}; that is, $A$ is a strong $K$-approximate group if $1_A*1_A(x) \geq |A|/K$ for each $x \in A^2$. Clearly any subgroup of a group is a strong $K$-approximate group with $K=1$, but there are more complex examples. For example, if $p > 3$ is a prime congruent to $3$ mod $4$ then the set $A \subset \Zmod{p}$ consisting of non-zero squares is a strong $(\frac{1}{2} - o(1))$-approximate group, for $A+A = \Zmod{p}\setminus \{0\}$ and $1_A*1_A(x) \geq (\frac{1}{2} - o(1))|A|$ for each non-zero $x \in \Zmod{p}$. Note also that if $A \subset G$ and $B \subset H$ are strong $K_A$- and $K_B$-approximate groups then $A \times B \subset G \times H$ is a strong $K_A K_B$-approximate group. We make some further remarks about strong approximate groups in \S\ref{discts}.

The remainder of this paper is laid out as follows. In the next section we describe some standard background material from the subject of arithmetic combinatorics. In \S\ref{main_proofs} we outline the basic idea behind our method and present the proofs of our almost-periodicity results. The proofs of the results on structures in product sets are very short and we give them immediately afterwards in \S\ref{non-abelian}. In \S\ref{structure-generation} we establish a structure-generation lemma that allows us to pass from arbitrary sets of translates in abelian groups to structured sets of translates. In \S\ref{longAPs} we give the proof of Theorem \ref{A+B} on arithmetic progressions in sumsets, and in \S\ref{roth} we present our proof of Roth's theorem. We present the proof of Proposition \ref{discontinuous} on strong approximate groups in \S\ref{discts}, and we close in \S\ref{remarks} with some further remarks, including a comparison with Fourier-analytic results.

\subsection{Acknowledgements}
We would like to thank Tom Sanders for many interesting and helpful conversations relating to several of the results of this paper. The second-named author is grateful for the support of an EPSRC Postdoctoral Fellowship, enjoyed while part of this work was carried out.

\section{Preliminaries on convolutions and product sets}\label{prelims}
In this section we record some useful standard results about convolutions and product sets; it may be largely skipped by those familiar with additive combinatorics. We follow Tao \cite{tao:non-comm}.

For functions on abelian groups the operation of convolution is commutative; this is not true in general for non-abelian groups. Convolution is, however, always bilinear and associative. A crucial link between convolutions and products is that the support of $1_{A_1} * \cdots * 1_{A_k}$ is the product set $A_1 \cdots A_k$. More precisely, 
\begin{equation}
1_{A_1} * \cdots * 1_{A_k}(x) = | \{ (a_1, \ldots, a_k) \in A_1 \times \cdots \times A_k : a_1 \cdots a_k = x \} |; \label{convolution-count} \end{equation}
convolutions thus count how many representations an element of a product of $k$ sets has a product of elements of the $k$ sets. For pairs of sets one also has the interpretation
\[ 1_A*1_B(x) = |A \cap xB^{-1}| = |B \cap A^{-1}x|. \]
For functions this change between left-translates and right-translates is illustrated by the reflection property
\begin{equation} \widetilde{f*g} = \widetilde{g}*\widetilde{f} \label{convolution-reflection} \end{equation}
where $\widetilde{f}(x) := f(x^{-1})$. Note that $\widetilde{1_X} = 1_{X^{-1}}$. Since convolutions are counts, sums of convolutions are also counts, and a full sum counts a particularly simple quantity:
\begin{equation} \sum_{x \in G} 1_{A_1} * \cdots * 1_{A_k}(x) = |A_1| \cdots |A_k|. \label{convolution_sum} \end{equation}

Many results in this paper involve conditions on the cardinalities of product sets. For two finite sets $A$ and $B$ in a group $G$, one always has the inequalities
\[ \max(|A|, |B|) \leq |A \cdot B| \leq |A||B| \]
with equality possible in various scenarios. Of course $|A \cdot B| \leq |G|$ as well. Of particular importance to this paper are the cases when the product set $A \cdot B$ is \emph{small}, though precisely what this means will depend on the context. Generally we shall say that $|A \cdot B|$ is small if it is at most $K|A|$ or $K|B|$ for some fixed number $K$, i.e., if it is within a constant factor of being as small as it could be. One generally thinks of a condition $|A \cdot B| \leq K|B|$ as showing that $A$ and $B$ share some structure, particularly if $A$ and $B$ are close in size. In particular this implies that $A$ and $B$ must themselves be somewhat structured, as follows from \cite[Lemma 3.2]{tao:non-comm}.

\begin{lemma}[Ruzsa triangle inequality]\label{triangle_inequality}
Let $A, B, C \subset G$ be finite, non-empty subsets of a group. Then
\[ |A \cdot C^{-1}| \leq \frac{|A \cdot B^{-1}| |B \cdot C^{-1}|}{|B|}. \]
\end{lemma}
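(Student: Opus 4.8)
The plan is to prove the inequality by exhibiting an injection from $(A \cdot C^{-1}) \times B$ into $(A \cdot B^{-1}) \times (B \cdot C^{-1})$, which immediately gives $|A \cdot C^{-1}|\,|B| \leq |A \cdot B^{-1}|\,|B \cdot C^{-1}|$ and hence the claim after dividing by $|B|$ (note $B$ is non-empty). The starting point is that every $d \in A \cdot C^{-1}$ has at least one representation $d = a c^{-1}$ with $a \in A$ and $c \in C$; I would use the axiom of choice to fix one such representation for each $d$, writing $a(d)$ and $c(d)$ for the chosen elements.

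Next I would define the map $\phi : (A \cdot C^{-1}) \times B \to G \times G$ by $\phi(d,b) = \bigl(a(d) b^{-1},\, b\, c(d)^{-1}\bigr)$. Since $a(d) \in A$ and $b \in B$, the first coordinate lies in $A \cdot B^{-1}$; since $b \in B$ and $c(d) \in C$, the second coordinate lies in $B \cdot C^{-1}$. So $\phi$ indeed maps into $(A \cdot B^{-1}) \times (B \cdot C^{-1})$, and the placement of $b^{-1}$ on the right of the first factor and $b$ on the left of the second is chosen precisely so that the product of the two coordinates telescopes.

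Then I would verify injectivity. Given $(u,v) = \phi(d,b)$, note that $uv = a(d) b^{-1} b\, c(d)^{-1} = a(d) c(d)^{-1} = d$, so $d$ is recovered as the product $uv$ of the two coordinates. Once $d$ is known, so is $a(d)$ (it is the fixed choice), and then from $u = a(d) b^{-1}$ we recover $b = u^{-1} a(d)$. Hence $(d,b)$ is determined by $(u,v)$, so $\phi$ is injective, completing the argument.

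I do not anticipate a genuine obstacle here; the only point requiring care is the non-commutativity, which forces the specific left/right placement of $b$ and $b^{-1}$ in the definition of $\phi$ so that the middle cancels and the product of coordinates returns $d$. (One could alternatively phrase the same bookkeeping in the language of the representation-counting identity \eqref{convolution-count}, but the explicit injection is the most transparent route.)
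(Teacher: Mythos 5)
Your proof is correct, and it is the standard Ruzsa injection argument (fixing a representation $d = a(d)c(d)^{-1}$ for each $d \in A\cdot C^{-1}$ and mapping $(d,b) \mapsto (a(d)b^{-1},\, b\,c(d)^{-1})$, recovering $d$ as the product of the coordinates). The paper does not actually prove this lemma itself --- it simply cites it from Tao's paper on product set estimates for non-commutative groups --- and the argument you give is precisely the one found in that reference, with the left/right placements correctly adapted to the non-abelian setting.
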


Our almost-periodicity theorems are thus particularly effective when one of the sets $A$ and $B$ is structured in the sense of having \emph{small doubling} $|A^2| \leq K|A|$ or $|B^2| \leq K|B|$, or \emph{small differencing} $|A\cdot A^{-1}| \leq K|A|$ or $|B\cdot B^{-1}| \leq K|B|$, for some small, fixed $K$. In abelian groups the following result is particularly useful for bounding sizes of sumsets; see \cite[Chapter 6]{tao-vu} for references and a proof.

\begin{theorem}[Pl\"unnecke-Ruzsa inequality]\label{plunnecke-ruzsa}
Let $A$ and $B$ be finite subsets of an abelian group, and suppose $|A+B| \leq K|A|$. Then
\[ |n B - m B| \leq K^{m+n} |A| \]
for all integers $m, n \geq 1$.
\end{theorem}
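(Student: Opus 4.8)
The plan is to combine Plünnecke's graph-theoretic machinery with the Ruzsa triangle inequality (Lemma~\ref{triangle_inequality}), in two stages. First I would extract from the hypothesis a single nonempty subset $A' \subset A$ on which every iterated sumset $A'+kB$ grows no faster than $K^k$; then I would feed this into the triangle inequality to bound $|nB-mB|$.

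For the first stage I would form the \emph{Plünnecke graph}: a directed layered graph with layers $V_0,V_1,\ldots,V_h$, where $h=m+n$, $V_i$ is a copy of $A+iB$, and there is a directed edge from $x\in V_{i-1}$ to $y\in V_i$ precisely when $y-x\in B$. One checks that this is a \emph{commutative} layered graph in Plünnecke's technical sense --- this is where commutativity of the group is used. Plünnecke's theorem on such graphs, proved via Menger's theorem applied to the magnification ratios $D_i:=\min_{\emptyset\ne Z\subset V_0}|Z+iB|/|Z|$, asserts that $D_i^{1/i}$ is non-increasing in $i$; a tensor-power argument synchronises the minimisers across the layers and upgrades this to the statement that there is a single nonempty $A'\subset A$ with $|A'+iB|\le D_1^{\,i}|A'|$ for every $0\le i\le h$. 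Since taking $Z=A$ gives $D_1\le|A+B|/|A|\le K$, this yields $|A'+iB|\le K^i|A'|\le K^i|A|$ for all $i\le h$. (Alternatively, one can bypass the graph theory and reach the same conclusion by an elementary argument of Petridis: let $A'$ minimise $|A'+B|/|A'|=:K'\le K$, prove the compression inequality $|A'+C+B|\le K'|A'+C|$ for every finite $C$ by induction on $|C|$ using the minimality of $K'$, and iterate.)

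For the second stage I would apply the Ruzsa triangle inequality of Lemma~\ref{triangle_inequality}, which in additive notation reads $|X-Z|\le|X-Y|\,|Y-Z|/|Y|$, with $X=nB$, $Y=-A'$ and $Z=mB$. This gives
\[ |nB-mB| \;\le\; \frac{|nB+A'|\,|A'+mB|}{|A'|} \;\le\; \frac{K^n|A'|\cdot K^m|A'|}{|A'|} \;=\; K^{m+n}|A'| \;\le\; K^{m+n}|A|, \]
using the bounds from the first stage, which is the desired conclusion.

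The main obstacle is entirely contained in the first stage: proving Plünnecke's theorem for commutative layered graphs (the magnification-ratio inequality, and in particular the tensor-power trick needed to produce one subset $A'$ that works simultaneously for all the relevant layers). Once that structural input is available, the passage to $|nB-mB|$ is a single, routine application of the triangle inequality; for the details of the first stage one would simply refer to \cite[Chapter 6]{tao-vu}, as indicated before the statement.
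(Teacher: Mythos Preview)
Your proposal is correct and follows the standard two-stage proof: extract via Pl\"unnecke's graph-theoretic theorem (or the Petridis alternative) a subset $A'\subset A$ with $|A'+iB|\le K^i|A'|$ for all relevant $i$, then apply the Ruzsa triangle inequality with $X=nB$, $Y=-A'$, $Z=mB$. Note, however, that the paper does not actually prove this theorem at all---it is stated as a preliminary result with the line ``see \cite[Chapter 6]{tao-vu} for references and a proof''---so there is no in-paper proof to compare against; your sketch is precisely the argument one finds in that cited reference.
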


As previously noted, however, one can substitute the above notion of structure for a much weaker one: that of being dense in a structured set (such as the ambient group). There are other ways in which one can weaken the notion of structure used; recall our definition of the multiplicative energy between two sets:
\[ E(A,B) = \sum_{x \in G} 1_A*1_B(x)^2. \]
If the product set $A \cdot B$ is small compared to either $A$ or $B$, in the sense that it has size at most $K|A|$ or $K|B|$, then $E(A,B)$ is large:
\begin{equation}
E(A,B) \geq \frac{1}{|A\cdot B|}\left(\sum_{x \in G} 1_A*1_B(x) \right)^2 = \frac{|A|^2|B|^2}{|A \cdot B|}, \label{energy-CS} \end{equation}
where the inequality follows from the Cauchy-Schwarz inequality. On the other hand, the condition $E(A,A) \geq |A|^3/K$ need not imply that $|A^2|$ is small, even in the abelian setting. We mention that there is a partial converse, however, that could be used in certain applications to keep the effectiveness of the bounds of this paper in the case when the sets in question have high multiplicative energy instead of small doubling: this is known as the Balog-Szemer\'edi-Gowers theorem. We point the interested reader to \cite[Chapter 2]{tao-vu} and \cite[Section 5]{tao:non-comm} for more information on this.

Many of the above properties have analogues for functions more general than indicator functions, of course. The distinction between indicator functions and more general functions tends not to be particularly important in practice; see the comments in \S\ref{remarks}.

\section{Proofs of the main propositions}\label{main_proofs}
Each of our propositions on almost-periodicity has to do with finding translates by which the convolution $1_A*1_B$ is approximately invariant in some norm. There are two basic ideas behind the proofs of these propositions. The first is that if one selects a small random subset $C \subset A$, then with high probability the convolution $1_C*1_B$ will approximate the function $\frac{|C|}{|A|}1_A*1_B$. This means that the approximation will hold for many subsets $C$ of $A$; so many, in fact, that there must be some relations amongst the sets: lots of them must in fact be \emph{translates} of one another, which is the second idea. The translates so obtained correspond to translates that leave $1_A*1_B$ approximately invariant (in the appropriate norm).

Surprisingly little background is needed to prove Proposition \ref{L_2-local}; all we shall assume is some basic familiarity with the probabilistic method---see for example \cite{alon-spencer} or \cite{tao-vu} for more details on this. We shall prove the following equivalent version of Proposition \ref{L_2-local}; the equivalence follows immediately from the reflection identity \eqref{convolution-reflection}.

\begin{proposition}[$L^2$-almost-periodicity, left-translates]\label{L_2-local-left}
Let $G$ be a group, let $A, B \subset G$ be finite subsets, and let $\epsilon \in (0,1)$ be a parameter. Suppose $S \subset G$ is such that $|S \cdot A| \leq K|A|$. Then there is a set $T \subset S^{-1}$ of size 
\[ |T| \geq \frac{|S|}{(2K)^{9/\epsilon^2}} \]
such that, for each $t \in T T^{-1}$,
\[ \norm{ 1_A*1_B(tx) - 1_A*1_B(x) }_2^2 \leq \epsilon^2 |A|^2 |B|. \]
\end{proposition}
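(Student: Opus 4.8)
The plan is to carry out the random-sampling strategy sketched in the introduction: pick a small random multiset $C = \{a_1, \dots, a_\ell\}$ of elements of $A$, chosen independently and uniformly with replacement, with $\ell \approx 1/\epsilon^2 \cdot (\text{small constant})$ to be fixed later, and consider the random function $f_C := \frac{|A|}{\ell}\, 1_C * 1_B = \frac{|A|}{\ell}\sum_{i=1}^\ell 1_{a_i B}$. First I would show that $f_C$ concentrates around its mean $1_A * 1_B$ in $L^2$: for a fixed $x$, $f_C(x)$ is an average of $\ell$ i.i.d.\ bounded random variables with mean $1_A * 1_B(x)$, so computing $\E\,\norm{f_C - 1_A*1_B}_2^2$ by expanding the square and using independence gives a bound of the shape $\frac{|A|}{\ell}\sum_x \E(1_{a_1B}(x)^2) \leq \frac{|A|}{\ell}\cdot |A||B| \cdot \frac{1}{|A|}$-type estimate; more carefully, $\E\,\norm{f_C - 1_A*1_B}_2^2 \leq \frac{|A|^2}{\ell}\cdot\frac{1}{|A|}\sum_x 1_A*1_B(x) = \frac{|A|^2|B|}{\ell}$. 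Choosing $\ell$ a suitable multiple of $\epsilon^{-2}$ makes this at most, say, $\tfrac14 \epsilon^2 |A|^2|B|$, so by Markov's inequality at least half of all choices of $(a_1,\dots,a_\ell) \in A^\ell$ yield $\norm{f_C - 1_A*1_B}_2^2 \leq \tfrac12\epsilon^2|A|^2|B|$; call such tuples \emph{good}.

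The second step is the pigeonhole/translation idea. Each good tuple lies in $A^\ell$, and I want to say that two good tuples that are ``translates of each other'' produce a translation almost-period. Concretely, observe that if $C = (a_1,\dots,a_\ell)$ and $C' = (ta_1, \dots, ta_\ell)$ for some $t \in G$ — i.e.\ $C' = tC$ coordinatewise — then $f_{C'}(x) = f_C(t^{-1}x)$, so if both $C$ and $C'$ are good then $\norm{1_A*1_B(tx) - 1_A*1_B(x)}_2 \leq \norm{1_A*1_B(tx) - f_{C'}(x)}_2 + \norm{f_{C'}(x) - f_C(t^{-1}x)}_2 + \dots$ — but the cleaner route is to first use the set $S$ and the small-product hypothesis. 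Here is where $S$ enters: since $|S \cdot A| \leq K|A|$, I want to sample the translating elements from $S$. The right setup is: for each $s \in S$ consider the ``shifted'' tuple; group the pairs $(s, C)$ with $s \in S$, $C \in A^\ell$ by the value of the tuple $(sa_1, \dots, sa_\ell) \in (SA)^\ell$. The number of pigeonholes is $|SA|^\ell \leq (K|A|)^\ell$, while the number of pairs $(s,C)$ with both $s$ arbitrary in $S$ and $sC$ good is at least $|S| \cdot \tfrac12|A|^\ell$ (for each fixed $s$, the map $C \mapsto sC$ is a bijection of $A^\ell$ onto $(sA)^\ell$, and goodness of $sC$ depends only on the image tuple, so ``half of $C$'s work'' transfers). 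Hence some pigeonhole contains at least $\frac{|S| |A|^\ell/2}{(K|A|)^\ell} = \frac{|S|}{2K^\ell}$ pairs $(s, C)$; fixing that pigeonhole, these share the same image tuple $(sa_1,\dots,sa_\ell)$, so for any two of them $(s, C)$ and $(s', C')$ we get $sa_i = s'a_i'$ for all $i$, hence $a_i' = (s')^{-1}s\, a_i$, i.e.\ $C' = (s^{-1}s')^{-1}C$ coordinatewise, and both $sC$ and $s'C'$ being good translates into the almost-period conclusion for $t = s^{-1}s'$ (or its inverse, after chasing the reflection identity). Let $T$ be the set of the $s$-components appearing; then $|T| \geq |S|/(2K^\ell)$ after absorbing constants appropriately — one should be slightly careful to collect enough distinct $s$-values, which is where the constant $9$ in the exponent rather than a bare $1/\epsilon^2$ comes from.

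To finish, I would verify the $TT^{-1}$ statement precisely: for $t = s_1^{-1}s_2$ with $s_1, s_2 \in T$ both arising in the fixed pigeonhole together with tuples $C_1, C_2$, we have $s_1 C_1 = s_2 C_2$ as tuples, so $C_2 = s_2^{-1}s_1 C_1 = t^{-1} C_1$, giving $f_{C_1}(tx) = f_{C_2}(x)$; then by the triangle inequality in $L^2$, $\norm{1_A*1_B(tx) - 1_A*1_B(x)}_2 \leq \norm{1_A*1_B(tx) - f_{C_1}(tx)}_2 + \norm{f_{C_1}(tx) - f_{C_2}(x)}_2 + \norm{f_{C_2}(x) - 1_A*1_B(x)}_2$, where the first term equals $\norm{1_A*1_B - f_{C_1}}_2$ by translation-invariance of the $L^2$ norm (using that $s_1 C_1$ good $\Rightarrow$ $C_1$ translated is good — one must align exactly which translate is controlled, again handled by the reflection identity $\widetilde{f*g}=\widetilde g*\widetilde f$), the middle term is $0$, and the third is $\norm{f_{C_2} - 1_A*1_B}_2$; each controlled term is $\leq (\tfrac12\epsilon^2|A|^2|B|)^{1/2}$, so the square of the sum is $\leq \epsilon^2|A|^2|B|$, as required.

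I expect the main obstacle to be bookkeeping rather than conceptual: getting the translation bookkeeping right on the non-abelian side (which translate of the convolution is controlled by which tuple being good, and making sure $t \in TT^{-1}$ exactly corresponds to a genuine relation $s_1 C_1 = s_2 C_2$), and tracking the numerical constant so that $\ell$ can be taken to make the exponent $9/\epsilon^2$ rather than something larger — in particular one needs the concentration bound to leave enough room (say a factor $\tfrac14$ rather than $\tfrac12$) and the pigeonhole to lose only a factor $2$, and $2K^\ell \leq (2K)^\ell \leq (2K)^{9/\epsilon^2}$ for the stated $\ell$. The probabilistic and pigeonhole ingredients themselves are both elementary; the care is entirely in the constants and the left/right translate conventions, the latter being exactly why the proposition is phrased with $|S\cdot A|\le K|A|$ and $T \subset S^{-1}$ in this ``left-translates'' form.
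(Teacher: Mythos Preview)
Your approach is essentially the paper's: the paper samples a $k$-element \emph{subset} of $A$ (without replacement) rather than a tuple with replacement, and phrases your pigeonhole as an expectation argument (sample $C$ uniformly from $k$-subsets of $S\cdot A$, lower-bound $\P(tC\text{ approximates }A)$ for each $t\in S^{-1}$ by comparing binomial coefficients, then average over $t$ to find a single $C$ with many good $t$'s), but these differences are cosmetic and the constants come out the same with $\ell=\lceil 8/\epsilon^2\rceil$.

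One slip to fix in your pigeonhole step: you should count pairs $(s,C)$ with $C$ good, not with $sC$ good. The claim ``for each fixed $s$, half of $C\in A^\ell$ have $sC$ good'' is not what your concentration estimate gives (indeed $\E f_{sC}(x)=1_A*1_B(s^{-1}x)$, not $1_A*1_B(x)$), whereas ``half of $C\in A^\ell$ are good'' is exactly what you proved, and then the number of pairs $(s,C)$ with $C$ good is trivially at least $|S|\cdot\tfrac12|A|^\ell$. This is also what your final paragraph actually uses: the triangle inequality there needs $\norm{f_{C_i}-1_A*1_B}_2$ small for $i=1,2$, i.e.\ $C_1$ and $C_2$ good. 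Note that since $s_1C_1=s_2C_2$ in the same hole, ``$s_1C_1$ good'' and ``$s_2C_2$ good'' are literally the \emph{same} condition on the common image tuple and would yield only one inequality, not the two you need. With the correction the rest of your bookkeeping (including the $TT^{-1}$ verification) is fine.
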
 
\begin{proof}
Let $k$ be an integer between $1$ and $|A|/2$ that we shall fix later and let $C$ be a random subset of $A$ of size $k$, chosen uniformly out of all such sets. Let us write $\mu_C := 1_C\cdot|A|/k$ for a normalized version of the indicator function of $C$. It is easy to see that $\E \mu_C*1_B(x) = 1_A*1_B(x)$ for each $x \in G$ and that the variance
\[ \Var(\mu_C*1_B(x)) = \E |\mu_C*1_B(x) - 1_A*1_B(x)|^2 \]
satisfies
\[ \Var(\mu_C*1_B(x)) \leq \tfrac{|A|}{k} 1_A*1_B(x). \]
Summing this inequality over all $x \in A \cdot B$, the support of $1_A*1_B$, we obtain
\begin{equation} \E \norm{ \mu_C*1_B(x) - 1_A*1_B(x) }_2^2 \leq |A|^2 |B|/k. \label{C-expectation}\end{equation}
Let us say that a set $C \in \binom{G}{k}$ \emph{approximates} $A$ if the bound
\[ \norm{ \mu_C*1_B(x) - 1_A*1_B(x) }_2^2 \leq 2|A|^2 |B|/k \]
holds. By \eqref{C-expectation} and Markov's inequality we thus have that
\begin{equation} \P_{C \in \binom{A}{k}}( \text{$C$ approximates $A$} ) \geq 1/2, \label{A-prob}\end{equation}
where $\P_{C \in \binom{X}{k}}$ refers to the uniform distribution on $k$-sets in a set $X$.

We now consider $k$-sets $C$ chosen uniformly at random from $Y := S \cdot A$ instead of $A$. Let $t \in S^{-1}$. Clearly
\begin{align*}
\P_{C \in \binom{Y}{k}}( \text{$tC$ approximates $A$} ) &= \P_{C \in \binom{tY}{k}}( \text{$C$ approximates $A$} ),
\end{align*}
and since $A \subset tY$ we see that this is at least
\[ \binom{|A|}{k} \binom{|S \cdot A|}{k}^{-1} \P_{C \in \binom{A}{k}}( \text{$C$ approximates $A$} ). \]
By \eqref{A-prob} and the hypothesis that $|S \cdot A| \leq K|A|$, then, we have that
\[ \P_{C \in \binom{Y}{k}}( \text{$tC$ approximates $A$} ) \geq \frac{1}{(2K)^k}. \]
Summing this inequality over all $t \in S^{-1}$ thus gives
\[ \E_{C \in \binom{Y}{k}} | \{ t \in S^{-1} : \text{$tC$ approximates $A$} \} | \geq \frac{|S|}{(2K)^k}. \]
In particular there exists a set $C$ for which the set 
\[ T := \{ t \in S^{-1} : \text{$tC$ approximates $A$} \} \]
has size at least $|S|/(2K)^k$. For this set $C$ we have 
\[ \norm{ \mu_C*1_B(x) - 1_A*1_B(tx) }_2^2 \leq 2|A|^2|B|/k \]
for each $t \in T$, whence
\[ \norm{ 1_A*1_B(tx) - 1_A*1_B(x) }_2^2 \leq 8|A|^2|B|/k \]
for each $t \in TT^{-1}$ by the triangle inequality. The proposition now follows upon choosing $k := \lceil 8/\epsilon^2 \rceil$. (Note that the conclusion of the proposition is trivial if $k > |A|/2$.)
\end{proof}

We need to argue only a little more subtly in order to establish the analogous estimate for higher $L^p$ norms: we just make use of higher moments than the variance. In order to do this we shall need some more information about random variables of the type $1_C*1_B(x)$ considered above. Since $1_C*1_B(x) = |C \cap xB^{-1}|$, a moment's thought reveals that this random variable follows a \emph{hypergeometric distribution}: a random variable $X$ is said to follow a hypergeometric distribution with parameters $N$, $M$ and $k$ if 
\[ \P(X = j) = \binom{M}{j}\binom{N-M}{k-j}\Big/\binom{N}{k} \]
for each integer $j \geq 0$. Thus one may think of $X$ as counting the number of marked objects one obtains when selecting $k$ objects randomly and without replacement from a population of $N$ objects, a total $M$ of which are marked. The proof of the following bounds on the moments of hypergeometrically distributed random variables is elementary, though somewhat tangential to our main arguments, so we postpone it till Appendix \ref{appendix:hyper_moments}.

\begin{lemma}\label{hyper_moments}
Let $m \geq 1$ and suppose that $X$ follows a hypergeometric distribution with parameters $N$, $M$ and $k$ as above. Then
\[ \E |X - \tfrac{kM}{N}|^{2m} \leq 2\left(3m \tfrac{k M}{N} + m^2 \right)^m. \]
\end{lemma}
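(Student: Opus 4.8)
The plan is to reduce the hypergeometric variable to a binomial one, to expand the binomial's $2m$th central moment combinatorially---exploiting that the ``odd'' cross-terms vanish by independence---and then to bound the resulting polynomial in $kM/N$ by $2(3m\cdot kM/N+m^{2})^{m}$ using only crude estimates for Stirling-type numbers. The one genuinely delicate step will be the passage from the dependent hypergeometric structure to a sum of independent variables; everything afterwards is routine, and the constants $3$ and $1$ in $3m\mu+m^{2}$ are generous enough that no careful optimisation is needed.

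First I would pass to a binomial distribution. Write $X=\zeta_{1}+\cdots+\zeta_{k}$, where $\zeta_{i}$ indicates that the $i$th object drawn (without replacement) is marked. Hoeffding's theorem comparing sampling with and without replacement gives $\E\phi(X)\leq\E\phi(\tilde X)$ for every convex $\phi$, where $\tilde X$ has the binomial distribution with parameters $k$ and $p:=M/N$. Since $X$ and $\tilde X$ share the mean $\mu:=kp=kM/N$ and $x\mapsto|x-\mu|^{2m}$ is convex,
\[ \E|X-\mu|^{2m}\leq\E(\tilde X-\mu)^{2m}. \]
(If one prefers to avoid Hoeffding's theorem as an external input one can instead run the expansion below directly on $X=\sum_{i}\zeta_{i}$: each $\zeta_{i}$ is $\{0,1\}$-valued, so $\E\big[\prod_{l}(\zeta_{i_{l}}-p)^{b_{l}}\big]$ is a combination of probabilities of the form $\P(\zeta_{j_{1}}=\cdots=\zeta_{j_{s}}=1)=\prod_{i=0}^{s-1}\tfrac{M-i}{N-i}\leq p^{s}$, and the cross-terms over distinct indices---which no longer vanish---are checked to remain absorbed by the same polynomial.)

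Next I would estimate the right-hand side by brute expansion. Writing $\tilde X-\mu=\sum_{i=1}^{k}Y_{i}$ with $Y_{i}:=\eta_{i}-p$ independent, mean zero and $|Y_{i}|\leq1$, a term $\E[Y_{i_{1}}\cdots Y_{i_{2m}}]$ vanishes unless every index occurring among $i_{1},\dots,i_{2m}$ occurs at least twice. Grouping the surviving tuples by the partition of the $2m$ positions that they induce, and using that $|Y_{i}|\leq1$ forces $|\E Y_{i}^{b}|\leq\E Y_{i}^{2}$ for $b\geq2$ together with $\sum_{i}\E Y_{i}^{2}=kp(1-p)\leq\mu$, one obtains
\[ \E(\tilde X-\mu)^{2m}\leq\sum_{r=1}^{m}T(2m,r)\,\mu^{r}, \]
where $T(2m,r)$ is the number of partitions of a $2m$-element set into $r$ blocks each of size at least $2$ (so $1\leq r\leq m$). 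Now $T(2m,r)\leq r^{2m}/r!$, since $r!\,T(2m,r)$ counts at most all maps $\{1,\dots,2m\}\to\{1,\dots,r\}$; combining this with $r^{2m}\leq m^{2m-r}r^{r}$ (valid as $r\leq m$) and $r^{r}/r!\leq e^{r}\leq3^{r}\leq2\binom{m}{r}3^{r}$ gives $T(2m,r)\leq2\binom{m}{r}3^{r}m^{2m-r}$. Summing over $r$ and including the nonnegative $r=0$ term of a binomial expansion,
\[ \sum_{r=1}^{m}T(2m,r)\,\mu^{r}\leq2\sum_{r=0}^{m}\binom{m}{r}(3m\mu)^{r}(m^{2})^{m-r}=2(3m\mu+m^{2})^{m}, \]
which is precisely the asserted inequality.

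The expansion argument is cleanest when $2m$ is an even integer; for general real $m\geq1$ one may instead use the pointwise bound $z^{2m}\leq\frac{\Gamma(2m+1)}{2s^{2m}}(e^{sz}+e^{-sz})$ together with the elementary Bennett estimate $\E e^{s(\tilde X-\mu)}\leq e^{\mu(e^{s}-1-s)}$, choosing $s=\sqrt{2m/\mu}$ when $\mu\geq m$ (using $e^{s}-1-s\leq s^{2}$ on $[0,\sqrt2\,]$) and $s$ of order $\log(m/\mu)$ otherwise, and applying Stirling's formula to $\Gamma(2m+1)$; this requires a short two-case analysis but again lands inside the stated bound thanks to its slack. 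The obstacle, in either approach, remains the same: making the dependence disappear and controlling the leftover cross-terms---once that is done the rest is bookkeeping.
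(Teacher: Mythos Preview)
Your reduction from the hypergeometric to the binomial via Hoeffding's convexity comparison is exactly what the paper does. From that point on, however, you diverge. The paper bounds the binomial central moment by the layer-cake formula
\[
\E|X-np|^{2m}=\int_0^\infty \P(|X-np|^{2m}>t)\,\mathrm{d}t,
\]
splits into the lower- and upper-tail contributions, and feeds in the Bennett--Bernstein tail bounds $\P(X\leq np-t)\leq e^{-t^2/2np}$ and $\P(X\geq np+t)\leq e^{-t^2/(2np+2t/3)}$; the resulting gamma-function integrals are then estimated and combined. This route handles all real $m\geq 1$ uniformly.

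Your main line instead expands $(\tilde X-\mu)^{2m}$ multinomially, discards terms with a singleton index, groups by the induced set partition, and bounds the associated Stirling-type numbers $T(2m,r)$ crudely enough to recognise a binomial sum. This is entirely correct for integer $m$ (and in fact your chain of inequalities gives $(3m\mu+m^2)^m$ without the extra factor of $2$, since $\binom{m}{r}\geq 1$ already suffices). It is more elementary than the paper's argument and makes no appeal to tail or concentration inequalities. The cost is that it does not directly cover non-integer $m$, which the paper genuinely needs (in Theorem~\ref{AB-struct-full} one takes $m=\log 2n$). Your MGF sketch for that case is in the right spirit and will work, though the pointwise bound should read $|z|^{2m}\leq \Gamma(2m+1)s^{-2m}(e^{sz}+e^{-sz})$ without the factor $1/2$, and the two-regime optimisation does require some care to land inside $2(3m\mu+m^2)^m$; the slack in the constants is enough, but this part is closer in flavour to the paper's approach than your combinatorial one, since the tail bounds the paper uses are themselves derived from the same Bennett MGF estimate.
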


With these estimates in hand the proof of Proposition \ref{L_p-local} is straightforward. Again we prove the following trivially equivalent version.

\begin{proposition}[$L^p$-almost-periodicity, left-translates]\label{L_p-local-left}
Let $G$ be a group, let $A, B \subset G$ be finite subsets, and let $\epsilon \in (0,1)$ and $m \geq 1$ be parameters. Suppose $S \subset G$ is such that $|S \cdot A| \leq K|A|$. Then there is a set $T \subset S^{-1}$ of size 
\[ |T| \geq \frac{|S|}{(2K)^{50m/\epsilon}} \]
such that
\[ \norm{ 1_A*1_B(tx) - 1_A*1_B(x) }_{2m}^{2m} \leq \max\left(\epsilon^m |A B| |A|^m, \norm{ 1_A*1_B }_m^m \right) \epsilon^m |A|^m \]
for each $t \in T T^{-1}$.
\end{proposition}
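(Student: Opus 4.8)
The plan is to mimic the proof of Proposition~\ref{L_2-local-left} almost verbatim, replacing the use of the variance (the second moment) by the $2m$-th moment estimate of Lemma~\ref{hyper_moments}. As before, fix an integer $k$ with $1 \leq k \leq |A|/2$ to be chosen at the end, let $C$ be a uniformly random $k$-subset of $A$, and set $\mu_C := 1_C \cdot |A|/k$. For each fixed $x \in G$ the random variable $1_C*1_B(x) = |C \cap xB^{-1}|$ is hypergeometric with parameters $N = |A|$, $M = |A \cap xB^{-1}| = 1_A*1_B(x)$ and $k$, so $\mu_C*1_B(x) - 1_A*1_B(x) = \tfrac{|A|}{k}(X - \tfrac{kM}{N})$ for such an $X$, and Lemma~\ref{hyper_moments} gives
\[ \E |\mu_C*1_B(x) - 1_A*1_B(x)|^{2m} \leq 2\left(\tfrac{|A|}{k}\right)^{2m}\left(3m\tfrac{k}{|A|}1_A*1_B(x) + m^2\right)^m. \]
Summing over $x$ in the support $A \cdot B$ of $1_A*1_B$ and using $(a+b)^m \leq 2^{m-1}(a^m + b^m)$ together with $\sum_x 1_A*1_B(x)^m = \norm{1_A*1_B}_m^m$, one obtains a bound of the shape
\[ \E\norm{\mu_C*1_B(x) - 1_A*1_B(x)}_{2m}^{2m} \leq C_1 \left(\tfrac{|A|}{k}\right)^{2m}\left( (3m)^m \tfrac{k^m}{|A|^m}\norm{1_A*1_B}_m^m + m^{2m}|AB|\right) \]
for an absolute constant $C_1$; the factor $|AB|$ appears from summing the $m^{2m}$ term over the support. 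This is the key computation and the one requiring a little care, since the final statement of the proposition packages the right-hand side as a maximum of two terms multiplied by $\epsilon^m|A|^m$.

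Next I would choose $k$ to balance the two terms against $\epsilon^m|A|^m$. Roughly, taking $k$ of order $m/\epsilon$ makes $(|A|/k)^{2m}(3m)^m (k/|A|)^m = (|A|/k)^m(3m)^m$ comparable to $\epsilon^{-m}|A|^m \cdot(\text{small})$, and it makes $(|A|/k)^{2m}m^{2m} = (m|A|/k)^{2m}$ of order $\epsilon^{2m}|A|^{2m}$, i.e. of order $\epsilon^m|A|^m \cdot \epsilon^m|A|^m$. So with $k := \lceil C_2 m/\epsilon \rceil$ for a suitable absolute constant $C_2$ one gets
\[ \E\norm{\mu_C*1_B(x) - 1_A*1_B(x)}_{2m}^{2m} \leq \tfrac{1}{2}\max\left(\epsilon^m|AB||A|^m, \norm{1_A*1_B}_m^m\right)\epsilon^m|A|^m =: \tfrac{1}{2}M_0, \]
say (after absorbing constants by tightening $C_2$; one has to track the numerical constants honestly to land on the exponent $50m/\epsilon$, but this is routine bookkeeping).

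With this in place the rest of the argument is identical to the $L^2$ case. Declare $C \in \binom{G}{k}$ to \emph{approximate} $A$ if $\norm{\mu_C*1_B(x) - 1_A*1_B(x)}_{2m}^{2m} \leq M_0$; by Markov's inequality $\P_{C \in \binom{A}{k}}(C \text{ approximates } A) \geq 1/2$. Then pass to $k$-subsets of $Y := S \cdot A$: for $t \in S^{-1}$, since $A \subseteq tY$ and $|Y| \leq K|A|$, the probability that $tC$ approximates $A$ when $C$ is drawn from $\binom{Y}{k}$ is at least $\binom{|A|}{k}\binom{|Y|}{k}^{-1} \cdot \tfrac12 \geq (2K)^{-k}$. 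Averaging over $t \in S^{-1}$ produces a single $C$ for which $T := \{t \in S^{-1} : tC \text{ approximates } A\}$ has size at least $|S|/(2K)^k$, and the triangle inequality in $L^{2m}$ upgrades $\norm{\mu_C*1_B(x) - 1_A*1_B(tx)}_{2m} \leq M_0^{1/2m}$ (for $t \in T$) to $\norm{1_A*1_B(tx) - 1_A*1_B(x)}_{2m} \leq 2 M_0^{1/2m}$ for $t \in TT^{-1}$, i.e. $\norm{1_A*1_B(tx) - 1_A*1_B(x)}_{2m}^{2m} \leq 2^{2m}M_0$. One then folds the $2^{2m}$ into the constant by readjusting $C_2$ once more, and notes as before that the conclusion is vacuous if the chosen $k$ exceeds $|A|/2$. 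The main obstacle, as indicated, is not conceptual but the honest constant-tracking in the moment computation needed to arrive at the stated exponent $(2K)^{50m/\epsilon}$; the two-term maximum on the right-hand side is exactly what falls out of splitting the hypergeometric moment bound into its `linear' and `constant' pieces.
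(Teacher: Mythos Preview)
Your proposal is correct and follows essentially the same approach as the paper: random $k$-subset, hypergeometric moment bound via Lemma~\ref{hyper_moments}, sum over $x\in A\cdot B$, Markov, then the translate-counting argument from the $L^2$ case and the triangle inequality. The only cosmetic difference is that the paper keeps the summed moment bound as a single quantity $\lambda$ and only at the end notes $\lambda \leq 2(m|A|/k)^m 3.05^m \max(\norm{1_A*1_B}_m,\,20m|AB||A|/k)^m$ before setting $k=\lceil 49m/\epsilon\rceil$, whereas you split earlier via $(a+b)^m\leq 2^{m-1}(a^m+b^m)$; both routes produce the same two-term maximum up to absolute constants.
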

\begin{proof}
We follow the proof of Proposition \ref{L_2-local-left}, letting $C$ be a random subset of $A$ of size $k$ for some $k$ that is to be fixed. Fix an element $x \in G$. As alluded to above, the random variable $1_C*1_B(x)$ follows a hypergeometric distribution:
\[ \P(1_C*1_B(x) = j) = \binom{M}{j}\binom{|A|-M}{k-j}\Big/\binom{|A|}{k} \]
where $M := 1_A*1_B(x) = |A \cap xB^{-1}|$, the probability being nothing but the proportion of $k$-sets $C$ in $A$ that contain precisely $j$ elements from $A \cap xB^{-1}$. Lemma \ref{hyper_moments} therefore tells us that
\[ \E \big| 1_C*1_B(x) - \tfrac{k}{|A|} 1_A*1_B(x) \big|^{2m} \leq 2 \left(3mk\cdot 1_A*1_B(x)/|A| + m^2\right)^m \]
or, using the notation $\mu_C := \frac{|A|}{k} 1_C$,
\[ \E | \mu_C*1_B(x) - 1_A*1_B(x) |^{2m} \leq 2(m |A|/k)^m \left( 3\cdot 1_A*1_B(x) + m |A|/k \right)^m. \]
Summing over all $x \in A \cdot B$ then yields
\[ \E \norm{ \mu_C*1_B(x) - 1_A*1_B(x) }_{2m}^{2m} \leq 2(m |A|/k)^m  \sum_{x \in A \cdot B} \left(3\cdot 1_A*1_B(x) + m |A|/k \right)^m, \]
the right-hand side of which we denote by $\lambda$. From this it follows by Markov's inequality that
\[ \P\big( \norm{ \mu_C*1_B(x) - 1_A*1_B(x) }_{2m}^{2m} \leq 2\lambda \big) \geq 1/2. \]
We may now argue exactly as in the proof of Proposition \ref{L_2-local-left}, replacing the $L^2$-version of approximation there with this $L^{2m}$-version. We thus obtain a set $C \subset S \cdot A$ of size $k$ such that the set
\[ T := \{ t \in S^{-1} : \norm{ \mu_{C}*1_B(x) - 1_A*1_B(tx) }_{2m}^{2m} \leq 2\lambda \} \]
has size at least $|S|/(2K)^k$. The result now follows from the triangle inequality upon noting the bound
\[ \lambda \leq 2(m|A|/k)^m 3.05^m\max\left( \norm{ 1_A*1_B }_m, 20 m |A B| |A|/k \right)^m \]
and choosing $k := \left\lceil 49m/\epsilon \right\rceil$. 
\end{proof}

\begin{remark}
We have not attempted to optimize the constant $50$ that appears in the exponent of the density of the set $T$ in this proposition; one can certainly reduce it, though any such reduction would be largely irrelevant for our applications.
\end{remark}

\section{Structures in product sets}\label{non-abelian}
In this section we provide proofs of the applications discussed in the first part of \S\ref{applications}. These results were all versions of the statement that product sets are structured objects, with various meanings. 
Theorem \ref{core_set} said that sets $A^2 \cdot A^{-2}$ are structured in the sense that they contain large iterated product sets; this is perhaps the most straightforward consequence of Proposition \ref{L_2-local-left}: 

\begin{proof}[Proof of Theorem \ref{core_set}]
Set $\epsilon := 1/k\sqrt{K}$ and apply Proposition \ref{L_2-local-left} to $A$ with $B=S=A$ to obtain a set $T \subset A^{-1}$ of size at least $|A|/(2K)^{9k^2K}$ such that 
\[ \norm{ 1_A*1_A(tx) - 1_A*1_A(x) }_2^2 \leq \epsilon^2 |A|^3 \]
for each $t \in TT^{-1}$. Write $S := TT^{-1}$. By the triangle inequality we then have 
\[ \norm{ 1_A*1_A(tx) - 1_A*1_A(x) }_2^2 \leq |A|^3/K \]
for each $t \in S^k$. The left-hand side of this inequality can be expanded as
\begin{align*}
&2 \sum_{x \in G} 1_A*1_A(x)^2 - 2\sum_{x \in G} 1_A*1_A(tx)1_A*1_A(x) \\
&\quad = 2\left( E(A,A) - 1_A*1_A*1_{A^{-1}}*1_{A^{-1}}(t) \right).
\end{align*}
Since $A$ has small doubling, it also has large multiplicative energy by \eqref{energy-CS}: $E(A,A) \geq |A|^3/K$. Hence 
\[ 1_A*1_A*1_{A^{-1}}*1_{A^{-1}}(t) \geq |A|^3 \left(1/K - 1/2K\right) \geq |A|^3/2K. \]
Since $1_A*1_A*1_{A^{-1}}*1_{A^{-1}}$ has support $A^2 \cdot A^{-2}$, we thus have that $S^k \subset A^2 \cdot A^{-2}$ as desired. Furthermore, each element $t \in S^k$ has many representations as products in the way claimed, as follows from \eqref{convolution-count}.
\end{proof}

We record the following more general version of Theorem \ref{core_set}; the proof is the same except we do not specialize all the parameters when applying Proposition \ref{L_2-local-left}.

\begin{theorem}\label{ABBA}
Let $G$ be a group, let $A, B \subset G$ be finite, non-empty subsets and let $k \in \N$ be a parameter. Suppose $E(A,B) \geq |A|^2|B|/K$ and that $|D \cdot A| \leq K'|A|$ for some set $D \subset G$. Then there is a symmetric set $S \subset D^{-1} D$ containing the identity such that 
\[ S^k \subset A\cdot B \cdot B^{-1} \cdot A^{-1} \text{ and }  |S| \geq \exp\left(-9k^2 K \log{2K'} \right) |D|. \]
Furthermore, each element of $S^k$ has at least $|A|^2 |B|/2K$ representations as $a_1 b_1 b_2^{-1} a_2^{-1}$ with $a_i \in A$, $b_i \in B$.
\end{theorem}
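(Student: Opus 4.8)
The plan is to follow the proof of Theorem~\ref{core_set} line for line, simply declining to specialize the parameters $B$ and $S$ when invoking Proposition~\ref{L_2-local-left}. Concretely, I would set $\epsilon := 1/(k\sqrt{K})$ and apply Proposition~\ref{L_2-local-left} with $A$ and $B$ kept as they are and with the proposition's sampling set taken to be our $D$, so that its constant is our $K'$ (the hypothesis $|D\cdot A|\leq K'|A|$ is precisely what the proposition requires). This yields a set $T\subset D^{-1}$ with $|T|\geq |D|/(2K')^{9/\epsilon^2} = |D|/(2K')^{9k^2K} = \exp(-9k^2K\log 2K')|D|$ such that $\norm{1_A*1_B(tx)-1_A*1_B(x)}_2^2 \leq \epsilon^2|A|^2|B| = |A|^2|B|/(k^2K)$ for every $t\in TT^{-1}$. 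Put $S:=TT^{-1}$: this is symmetric, contains the identity, is contained in $D^{-1}D$, and has the asserted size. A telescoping application of the $L^2$ triangle inequality --- writing $t\in S^k$ as a product of $k$ elements of $S$ and using that left-translation of the argument $x$ is a bijection of $G$ --- then upgrades this to $\norm{1_A*1_B(tx)-1_A*1_B(x)}_2^2 \leq |A|^2|B|/K$ for each $t\in S^k$.

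Next I would expand this squared $L^2$-norm exactly as in the proof of Theorem~\ref{core_set}. Using translation invariance, $\sum_x 1_A*1_B(tx)^2 = \sum_x 1_A*1_B(x)^2 = E(A,B)$, while the cross term $\sum_x 1_A*1_B(tx)\,1_A*1_B(x)$ equals $1_A*1_B*1_{B^{-1}}*1_{A^{-1}}(t)$: indeed $\sum_x f(tx)f(x) = f*\widetilde{f}(t)$, and $\widetilde{1_A*1_B} = 1_{B^{-1}}*1_{A^{-1}}$ by the reflection identity~\eqref{convolution-reflection}. Hence the left-hand side equals $2\bigl(E(A,B) - 1_A*1_B*1_{B^{-1}}*1_{A^{-1}}(t)\bigr)$, and combining with the bound just obtained together with the energy hypothesis $E(A,B)\geq |A|^2|B|/K$ gives, for every $t\in S^k$,
\[ 1_A*1_B*1_{B^{-1}}*1_{A^{-1}}(t) \;\geq\; E(A,B) - \frac{|A|^2|B|}{2K} \;\geq\; \frac{|A|^2|B|}{2K} \;>\; 0. \]
Since the support of $1_A*1_B*1_{B^{-1}}*1_{A^{-1}}$ is exactly $A\cdot B\cdot B^{-1}\cdot A^{-1}$, this yields $S^k\subset A\cdot B\cdot B^{-1}\cdot A^{-1}$; and by the representation-counting identity~\eqref{convolution-count} the value $1_A*1_B*1_{B^{-1}}*1_{A^{-1}}(t)$ is precisely the number of ways to write $t = a_1 b_1 b_2^{-1} a_2^{-1}$ with $a_i\in A$, $b_i\in B$, which gives the final clause.

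I do not expect a genuine obstacle: the argument is structurally identical to that of Theorem~\ref{core_set}, and the only points needing a moment's care are (i) matching the parameters so that the exponent comes out to exactly $9k^2K\log 2K'$, which is what forces the choice $\epsilon = 1/(k\sqrt{K})$, and (ii) the reflection-identity computation turning $\sum_x 1_A*1_B(tx)\,1_A*1_B(x)$ into the four-fold convolution $1_A*1_B*1_{B^{-1}}*1_{A^{-1}}(t)$ --- in the non-abelian setting one must watch the order of the factors, but \eqref{convolution-reflection} handles this cleanly. The energy hypothesis is used exactly once, to keep the lower bound on the four-fold convolution strictly positive; note that by~\eqref{energy-CS} a small-product-set condition $|A\cdot B|\leq K|B|$ would be a (stronger) sufficient hypothesis, and specializing $A=B=D$ with $|A^2|\leq K|A|$ recovers Theorem~\ref{core_set}.
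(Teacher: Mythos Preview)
Your proposal is correct and follows precisely the paper's own approach: the paper states that the proof of Theorem~\ref{ABBA} ``is the same except we do not specialize all the parameters when applying Proposition~\ref{L_2-local-left}'', and your writeup carries this out with the right choices $\epsilon = 1/(k\sqrt{K})$ and sampling set $D$, together with the same expansion of the $L^2$-norm via the reflection identity. Every step checks out, including the non-abelian computation of the cross term and the final lower bound on the four-fold convolution.
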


Note that this really does generalize Theorem \ref{core_set} by \eqref{energy-CS}.

Theorem \ref{ABC} dealt with the product of three sets under the assumption of the existence of a `popular element'. Note that there are various conditions that will ensure the existence of a popular element for a triple of sets $(A,B,C)$: $A \cdot B \cdot C$ being small will certainly do, as will $\norm{1_A*1_B*1_C}_2$ being large. The condition $E(A,B) \geq |A|^2|B|/K$ is also a popularity-type condition, $E(A,B)$ equalling $1_A*1_B*1_{B^{-1}}*1_{A^{-1}}(1)$, and the pigeonhole principle shows that if the multiplicative energy $E(A,B)$ is large then there is a popular element for the triple $(B, B^{-1}, A^{-1})$.

\begin{proof}[Proof of Theorem \ref{ABC}]
Recall that we are given three finite sets $A_1$, $A_2$ and $A_3$, a `popular' element $x$ such that
\[ 1_{A_1}*1_{A_2}*1_{A_3}(x) \geq (|A_1||A_2|)^{1/2}|A_3|/K, \]
and a set $D$ such that $|A_3 \cdot D| \leq K'|A_3|$. Apply Proposition \ref{L_2-local} to the sets $A = A_2$ and $B = A_3$ with $\epsilon := 1/2kK$ to obtain a set $T \subset D$ of size at least $|D|/(2K')^{36k^2 K^2}$ such that
\[ \norm{ 1_{A_2}*1_{A_3}(yt) - 1_{A_2}*1_{A_3}(y) }_2^2 \leq \epsilon^2 |A_2||A_3|^2 \]
for each $t \in S := TT^{-1}$. Thus for each $t \in S^k$ we have
\[ \norm{ 1_{A_2}*1_{A_3}(yt) - 1_{A_2}*1_{A_3}(y) }_2^2 \leq |A_2||A_3|^2/4K^2. \]
Let $t \in S^k$. Then
\begin{align*}
&| 1_{A_1}*1_{A_2}*1_{A_3}(xt) - 1_{A_1}*1_{A_2}*1_{A_3}(x) | \\
&\qquad= \left| \sum_{y \in G} 1_{A_1}(y) \left( 1_{A_2}*1_{A_3}(y^{-1}xt) - 1_{A_2}*1_{A_3}(y^{-1}x) \right) \right| \\
&\qquad\leq |A_1|^{1/2} \norm{ 1_{A_2}*1_{A_3}(yt) - 1_{A_2}*1_{A_3}(y) }_2,
\end{align*}
the inequality being an application of the Cauchy-Schwarz inequality. Thus
\[ | 1_{A_1}*1_{A_2}*1_{A_3}(xt) - 1_{A_1}*1_{A_2}*1_{A_3}(x) | \leq (|A_1||A_2|)^{1/2}|A_3|/2K. \]
Since $x$ is a $(1/K)$-popular element and $t \in S^k$ was arbitrary, this completes the proof.
\end{proof}

We turn now to the case of two sets. Theorem \ref{AB-struct} is a special case of the following result, which has the advantage of giving stronger results in the situation when $|A \cdot B|$ is not small but the multiplicative energy $E(A,B)$ is still large.

\begin{theorem}\label{AB-struct-full}
Let $G$ be a group, let $A,B \subset G$ be finite, non-empty subsets and let $k, n \in \N$ be parameters. Suppose that
\begin{enumerate}
\item $E(A,B) \geq |A||B|^2/K_1$,
\item $|A \cdot B| \leq K_2 |A|$ and
\item $|B \cdot S| \leq K_3 |B|$.
\end{enumerate}
Then there is a set $T \subset S$ of size 
\[ |T| \geq \exp\left(-150 k^2 (K_1 K_2)^{1/2} (\log{2K_3}) (\log{2n}) \right)|S| \]
such that the product set $A \cdot B$ contains a left-translate of any set $P \subset (T T^{-1})^k$ of size at most $n$.
\end{theorem}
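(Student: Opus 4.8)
The plan is to obtain this as a direct consequence of the $L^p$-almost-periodicity of $1_A*1_B$ supplied by Proposition~\ref{L_p-local}, using hypotheses (i) and (ii) only to locate a large ``popular'' set of points at which $1_A*1_B$ is bounded below. First I would apply Proposition~\ref{L_p-local} to the pair $A,B$ (hypothesis~(iii), $|B\cdot S|\le K_3|B|$, is exactly what it requires) with a moment parameter $m$ and a tolerance $\epsilon$ to be fixed later, obtaining $T\subset S$ with $|T|\ge|S|/(2K_3)^{50m/\epsilon}$ such that, writing $f:=1_A*1_B$,
\[ \norm{f(xt)-f(x)}_{2m}^{2m}\le\delta:=\max\bigl(\epsilon^m|A\cdot B||B|^m,\ \norm{f}_m^m\bigr)\epsilon^m|B|^m\qquad\text{for all }t\in TT^{-1}. \]
Applying the triangle inequality in $L^{2m}$ $k$ times (and using translation-invariance of the norm) gives $\norm{f(xp)-f(x)}_{2m}^{2m}\le k^{2m}\delta$ for every $p\in(TT^{-1})^k$. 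Now fix $P\subset(TT^{-1})^k$ with $|P|\le n$; that $A\cdot B$ contains a left-translate of $P$ is exactly the assertion that some $x$ satisfies $f(xp)>0$ for every $p\in P$, so this is what I would produce.

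For the reservoir of candidate $x$, put $L:=\{x:f(x)\ge\theta\}$ with $\theta$ of order $|B|/(K_1K_2)^{1/2}$. Since $\norm{f}_1=|A||B|$, $\norm{f}_2^2=E(A,B)\ge|A||B|^2/K_1$ by (i), $\norm{f}_\infty\le|B|$, and $\operatorname{supp}f=A\cdot B$ has size at most $K_2|A|$ by (ii), splitting $\sum_xf(x)^2$ at the level $\theta$ and bounding the small part by $\sum_{f<\theta}f^2\le\theta^2|A\cdot B|$ shows that the choice $\theta=|B|/\sqrt{2K_1K_2}$ yields $\sum_{x\in L}f(x)^2\ge|A||B|^2/2K_1$, and hence $|L|\ge|A|/2K_1$ (using $\sum_{x\in L}f^2\le|B|^2|L|$). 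On the other hand, for a fixed $p\in P$ a point $x\in L$ can fail $f(xp)>0$ only if $f(xp)=0$, in which case $|f(xp)-f(x)|=f(x)\ge\theta$; by Markov's inequality applied to $|f(\cdot p)-f(\cdot)|^{2m}$ there are at most $\theta^{-2m}k^{2m}\delta$ such $x$. Summing over the at most $n$ elements of $P$, the set of $x\in L$ failing for some $p\in P$ has size at most $n\theta^{-2m}k^{2m}\delta$.

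It then suffices to choose the parameters so that $n\theta^{-2m}k^{2m}\delta<|A|/2K_1$: any $x\in L$ outside that bad set has $f(xp)>0$ for every $p\in P$, i.e.\ $xP\subset A\cdot B$. Substituting the value of $\theta$, this is an inequality of the shape $n\cdot(c\,k^2K_1K_2\,\epsilon)^m\cdot(\text{a correction from the }\max\text{ term})<1$; taking $m\asymp\log2n$ makes factors of $n^{1/m}$ harmless, and one can then take $\epsilon$ of order $(k^2(K_1K_2)^{1/2})^{-1}$, so that $50m/\epsilon\asymp k^2(K_1K_2)^{1/2}\log2n$ and the density bound $(2K_3)^{-50m/\epsilon}$ becomes $\exp\bigl(-O(k^2(K_1K_2)^{1/2}\log2K_3\log2n)\bigr)$, of the stated form. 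I expect this last step to be the main obstacle: $\delta$ is the maximum of two rather different quantities, and a careless estimate of $\norm{f}_m^m$ (for instance the crude $\norm{f}_m^m\le|A||B|^m$) produces only the weaker exponent $k^2K_1K_2$. To get the geometric-mean exponent $(K_1K_2)^{1/2}$ one must separate the two regimes of the $\max$, match the threshold $\theta$ to whichever is active, and in the regime where $\norm{f}_m^m$ is large play the support bound $|A\cdot B|\le K_2|A|$ off against the energy lower bound from~(i); pinning down the absolute constant $150$, and arranging that the exponent carries only $\log2n$ with no hidden dependence on $|A|$ or on $K_1,K_2$ inside the logarithm, also requires some care in the choice of $m$.
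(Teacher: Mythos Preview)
Your overall strategy---apply Proposition~\ref{L_p-local}, iterate the $L^{2m}$ bound to $(TT^{-1})^k$ via the triangle inequality, and argue by contradiction that some $x$ has $xP\subset A\cdot B$---matches the paper's. The divergence is in how the contradiction is run, and this is precisely where you flag an unresolved difficulty.

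Your level-set argument, with the fixed threshold $\theta=|B|/\sqrt{2K_1K_2}$, does not by itself yield the exponent $(K_1K_2)^{1/2}$: in the regime where $\delta=\norm{f}_m^m\,\epsilon^m|B|^m$, the crude bound $\norm{f}_m^m\le |A||B|^m$ (or any bound not adapted to the actual size of $\norm{f}_m$) forces $\epsilon\asymp 1/(k^2K_1K_2)$, giving the weaker exponent $K_1K_2$. Your proposed fix---``separate the two regimes of the $\max$ and match $\theta$ to whichever is active''---is the right instinct, but the paper sidesteps the whole issue much more cleanly.

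The paper's two moves are these. First, normalize adaptively: define $\gamma$ by $\norm{f}_m^m=\gamma^m|A\cdot B|\,|B|^m$ and take $\epsilon:=\gamma/(ek^2)$. Since $\epsilon\le\gamma$, the $\max$ in $\delta$ is automatically $\norm{f}_m^m$, so there is only one regime. Second, avoid level sets entirely: if no left-translate of $P$ lies in $A\cdot B$, then for \emph{every} $x$ there is some $t\in P$ with $f(xt)=0$, whence
\[
 n\,k^{2m}\epsilon^m|B|^m\norm{f}_m^m \;\ge\; \sum_{t\in P}\norm{f(\cdot\,t)-f}_{2m}^{2m}\;\ge\;\sum_x f(x)^{2m}\;\ge\;\frac{\norm{f}_m^{2m}}{|A\cdot B|},
\]
the last step being Cauchy--Schwarz on the support $A\cdot B$. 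The factor $\norm{f}_m^m$ cancels, leaving $n(k^2\epsilon/\gamma)^m\ge1$, i.e.\ $ne^{-m}\ge1$, which fails for $m=\log 2n$. One then checks $\gamma\ge (K_1K_2)^{-1/2}$ via the power-mean inequality $\norm{f}_m^m\ge|A\cdot B|\bigl(\norm{f}_2^2/|A\cdot B|\bigr)^{m/2}$ together with hypotheses (i) and (ii), and the exponent $50m/\epsilon=50ek^2(\log 2n)/\gamma\le 150k^2(K_1K_2)^{1/2}\log 2n$ drops out directly. So the ``main obstacle'' you identify is dissolved not by a case analysis but by letting $\epsilon$ depend on $\norm{f}_m$ itself.
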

\begin{proof}
We may assume that $n \geq 2$. Set $m := \log{2n}$, define $\gamma$ by requiring $\norm{ 1_A*1_B }_m^m = \gamma^m |A B| |B|^m$ and set $\epsilon := \gamma/e k^2$. Applying Proposition \ref{L_p-local} to $A$ and $B$ with these parameters gives us a set $T \subset S$ with
\[ |T| \geq \frac{|S|}{(2K_3)^{50e k^2 (\log{2n})/\gamma}} \]
such that
\[ \norm{ 1_A*1_B(xt) - 1_A*1_B(x) }_{2m}^{2m} \leq \epsilon^m |B|^m \norm{ 1_A*1_B }_m^m \]
for each $t \in T T^{-1}$. Let $P \subset (T T^{-1})^k$ be a set of size at most $n$. Suppose for a contradiction that $A \cdot B$ does not contain a left-translate of $P$. Then for every $x \in G$ there must be an element $t \in P$ for which $xt \notin A\cdot B$, i.e., for which $1_A*1_B(xt) = 0$. Hence
\begin{align*}
n k^{2m} \epsilon^m |B|^m \norm{ 1_A*1_B }_m^m &\geq \sum_{t \in P} \norm{ 1_A*1_B(xt) - 1_A*1_B(x) }_{2m}^{2m} \\
&\geq \sum_{x \in G} 1_A*1_B(x)^{2m}.
\end{align*}
By the Cauchy-Schwarz inequality this is at least $\norm{ 1_A*1_B }_m^{2m}/|A B|$. Recalling the definition of $\epsilon$ and $m$ then gives the desired contradiction; hence there must be some element $x$ for which $xP \subset A \cdot B$.
The result now follows upon noting that $\gamma \geq 1/(K_1 K_2)^{1/2}$; this follows from H\"older's inequality and \eqref{convolution_sum}.
\end{proof}
\begin{remark}
The constant $150$ in the conclusion should not be taken seriously; it can obviously be improved.
\end{remark}
\begin{remark}
If $|A \cdot B|$ is not small compared to $|A|$ then the conclusion of the theorem becomes much less effective. If one still has the energy condition $E(A,B) \geq |A||B|^2/K$ and $A$ and $B$ are of a similar size then one can use the Balog-Szemer\'edi-Gowers theorem \cite[Theorem 5.2]{tao:non-comm} to obtain large subsets $A' \subset A$ and $B' \subset B$ that one can then apply the theorem to effectively; this would yield better bounds than using a large value of $K_2$ directly. We omit the details.
\end{remark}

\section{Obtaining structured sets of translates}\label{structure-generation}
While Propositions \ref{L_2-local} and \ref{L_p-local} yield very large sets of translates for which $1_A*1_B$ is approximately translation-invariant, one often needs these sets to be structured as well. Indeed, for the abelian applications in this paper we shall need to find \emph{arithmetic progressions} of such translates. With Fourier-analytic methods the existence of an arithmetic progression of almost-periods is usually easy to obtain since one usually gets a Bohr set of translates, but we do not have this convenience. Instead we shall generate the structure by repeated set-addition.

We say that an arithmetic progression $P$ in an abelian group has length $k$ if it can be written as 
\[ P = \{ a, a+d, \ldots, a+(k-1)d \} \]
for some non-zero element $d$. Note that this notion may be somewhat degenerate in some groups.

\begin{lemma}\label{local-sarkozy}
Let $G$ be an abelian group, let $S \subset G$ be a finite subset that satisfies $|S+S| \leq K|S|$ or $|S-S| \leq K|S|$ and let $k \in \N$. Suppose $A \subset S$ satisfies $|A| \geq \delta |S|$ where
\[ \delta > K^{3k/2}/|S|^{1/(k+1)}. \]
Then the set $kA - kA$ contains a symmetric arithmetic progression of length at least $2^{k+1}$ passing through $0$, with non-zero step $d \in A-A$.
\end{lemma}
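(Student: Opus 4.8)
The plan is to manufacture, by a single well-chosen application of the pigeonhole principle, a non-zero element $d \in A-A$ \emph{all} of whose dyadic multiples $d, 2d, 4d, \dots, 2^k d$ again lie in $A-A$; the desired symmetric progression is then assembled from these differences by dyadic summation.

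To find such a $d$, I would consider the $|A|^{k+1}$ tuples $(a_0, a_1, \dots, a_k) \in A^{k+1}$ together with the map
\[ \phi\colon (a_0, a_1, \dots, a_k) \longmapsto (a_1 - 2a_0,\, a_2 - 2a_1,\, \dots,\, a_k - 2a_{k-1}) \in (A - 2A)^k. \]
Subtracting \emph{twice} the previous coordinate is the essential twist: if $\phi(a) = \phi(b)$ then $a_j - b_j = 2(a_{j-1} - b_{j-1})$ for every $j$, hence $a_j - b_j = 2^j d$ with $d := a_0 - b_0$, so a collision automatically produces an entire geometric string of differences in $A-A$ rather than merely a single popular difference. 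To see that $\phi$ cannot be injective, one bounds the codomain: $A - 2A \subseteq S - 2S$, and since $S$ has small doubling or small differencing, the Pl\"unnecke--Ruzsa inequality (Theorem \ref{plunnecke-ruzsa}) gives $|S - 2S| \leq K^3 |S|$ (using whichever of the two hypotheses holds), so $|(A-2A)^k| \leq K^{3k}|S|^k$. On the other hand $|A|^{k+1} = \delta^{k+1}|S|^{k+1}$, and the hypothesis $\delta > K^{3k/2}/|S|^{1/(k+1)}$ — which, as $K \geq 1$ and $k \geq 1$, is at least the bound $\delta > K^{3k/(k+1)}/|S|^{1/(k+1)}$ that is actually needed — forces $|A|^{k+1} > K^{3k}|S|^k$. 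Hence two distinct tuples collide; distinctness forces $d \neq 0$, and then $d, 2d, 4d, \dots, 2^k d$ all lie in $A-A$, as do their negatives since $A-A$ is symmetric.

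It then remains to build the progression. Each three-element set $\{-2^j d,\, 0,\, 2^j d\}$ lies in $A-A$, so summing $k$ of them lands inside $\underbrace{(A-A) + \dots + (A-A)}_{k} = kA - kA$; a short computation with binary expansions shows that $\sum_{j=0}^{k-1}\{-2^j d, 0, 2^j d\} = \{\, m d : |m| \leq 2^k - 1 \,\}$, which is a symmetric arithmetic progression through $0$ with non-zero step $d \in A-A$ of the asserted length. (In a group with small torsion the $2^j d$ may coincide and the progression degenerate, but this is exactly the phenomenon already flagged before the statement, and it does not occur in the applications, where $G = \Z$.)

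I expect the single genuinely non-obvious step to be the choice of the map $\phi$: the factor of $2$ is precisely what turns the pigeonhole output into a \emph{geometric} progression of differences, and that is what allows a mere $k$-fold sumset to contain a progression of length exponential in $k$. The Pl\"unnecke--Ruzsa estimate for $|S - 2S|$ and the final binary-digit computation are routine by comparison.
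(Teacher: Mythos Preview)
Your approach is essentially the paper's --- pigeonhole on a map from $A^{k+1}$ to force a nonzero $d$ with $d, 2d, \ldots, 2^k d \in A-A$, then binary expansion --- but with a pleasant twist in the choice of map. The paper uses
\[ f(a, x_1, \ldots, x_k) = (x_1 - 2a,\ x_2 - 4a,\ \ldots,\ x_k - 2^k a), \]
whose image lies in $\prod_{j=1}^k (S - 2^j \cdot S)$; bounding this product requires the iterated-dilate estimate $|S - 2^j\cdot S| \leq K^{3j}|S|$ of Lemma~\ref{dilate-lemma} and yields a codomain of size $K^{3k(k+1)/2}|S|^k$, which is exactly what drives the hypothesis $\delta > K^{3k/2}/|S|^{1/(k+1)}$. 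Your telescoping map lands in $(S - 2\cdot S)^k$, so only the single Pl\"unnecke--Ruzsa instance $|S - 2\cdot S| \leq K^3|S|$ is needed and the codomain has the smaller size $K^{3k}|S|^k$; as you correctly note, this actually proves the lemma under the weaker hypothesis $\delta > K^{3k/(k+1)}/|S|^{1/(k+1)}$, and dispenses with Lemma~\ref{dilate-lemma} altogether.

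One small slip in your final step: summing only the $k$ sets $\{-2^j d, 0, 2^j d\}$ for $j = 0, \ldots, k-1$ gives $\{md : |m| \leq 2^k - 1\}$, a progression of length $2^{k+1} - 1$, one short of the asserted $2^{k+1}$. The fix is to also use the last building block $2^k d \in A-A$ that your collision produced: any integer $m$ with $0 < |m| \leq 2^k$ has at most $k$ nonzero binary digits (the only such $m$ with a digit in position $k$ being $\pm 2^k$ itself, which has just one), so $md$ is still a sum of at most $k$ elements of $A-A$ and hence lies in $kA - kA$. This recovers the full progression $\{md : |m| \leq 2^k\}$ of length $2^{k+1}+1$, matching the paper's proof.
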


To prove this we require a simple preliminary result. (This is not required if $K = 1$, as would be the case if $X$ is a group.)

\begin{lemma}\label{dilate-lemma}
Let $G$ be an abelian group and let $A \subset G$ be a finite subset satisfying $|A+A| \leq K|A|$ or $|A-A| \leq K|A|$. Let $k \in \N$. Then
\[ |A - 2^k \cdot A| \leq K^{3k}|A|. \]
\end{lemma}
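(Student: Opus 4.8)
The plan is to bound $|A - 2^k \cdot A|$ by iterating a doubling-type estimate. The key observation is that $A - 2^{k} \cdot A = A - 2\cdot(2^{k-1}\cdot A)$, so we want to relate the dilate-by-$2$ of a set to ordinary sumsets, and then use the Pl\"unnecke--Ruzsa inequality (Theorem \ref{plunnecke-ruzsa}). First I would record the elementary set-theoretic inclusion $2 \cdot X \subset X + X$ for any subset $X$ of an abelian group (if $x \in X$ then $2x = x + x$), which gives $A - 2\cdot Y \subset A - Y - Y$ for any $Y$. Applying this with $Y = 2^{k-1}\cdot A$ yields
\[ |A - 2^k \cdot A| \leq |A - 2^{k-1}\cdot A - 2^{k-1}\cdot A|. \]

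Next I would set up the induction on $k$. Write $X_j := A - 2^{j}\cdot A$, so $X_0 = A - A$ and we have just shown $|X_k| \leq |X_{k-1} + X_{k-1} - A|$ — actually it is cleaner to track the single set $A - 2^{j}\cdot A$ together with a Ruzsa-triangle bound. The cleanest route: first note $|A-A| \le K^2 |A|$ under either hypothesis (this is standard — if $|A+A| \le K|A|$ then the Pl\"unnecke--Ruzsa inequality with $B=A$ gives $|A-A| \le K^2|A|$; if $|A-A|\le K|A|$ it is immediate, and $K^2 \ge K$). Then prove by induction the stronger-looking but more usable claim that $|A - 2^{j}\cdot A| \le K^{3j+2}|A|$ would overshoot; instead I would aim directly for $|A - 2^{j} \cdot A| \le K^{3j}|A|$ for $j \ge 1$, starting the induction at $j=1$ where $|A - 2\cdot A| \le |A - A - A| \le K^{3}|A|$ by Pl\"unnecke--Ruzsa (with $m=2, n=1$), using that $|A-A| \le K|A|$ or switching hypotheses appropriately — the asymmetry between the two hypotheses is the only delicate bookkeeping point, handled by the observation that $|A+A|\le K|A|$ also implies $|A - A| \le K^2|A| \le K^2|A|$, so one can always reduce to working with $|A-A|$ at the cost of squaring.

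For the inductive step, suppose $|A - 2^{j-1}\cdot A| \le K^{3(j-1)}|A|$. Using $2^j \cdot A \subset 2^{j-1}\cdot A + 2^{j-1}\cdot A$ we get $|A - 2^j \cdot A| \le |A + (-2^{j-1}\cdot A) + (-2^{j-1}\cdot A)|$, and then I would apply the Ruzsa triangle inequality / Pl\"unnecke machinery: set $U := -2^{j-1}\cdot A$, so $|A - U| = |A - 2^{j-1}\cdot A| \le K^{3(j-1)}|A|$ and $|U| = |A|$. We need $|A + U + U|$, which Ruzsa's inequalities bound in terms of $|A+U|, |A - A|$ (or $|U-U| = |A-A|$) — roughly $|A+U+U| \le |A+U|\,|U - U|\,|A|^{-1}\cdot(\text{stuff})$; carrying the constants through should yield a factor of $K^{3}$ per step, giving $K^{3j}|A|$. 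The main obstacle I expect is precisely this constant-tracking in the inductive step: making the Ruzsa-triangle applications line up so that exactly three powers of $K$ are consumed each time (rather than, say, four), and handling the two alternative hypotheses ($|A+A|\le K|A|$ versus $|A-A|\le K|A|$) uniformly. Everything else is routine. Note also that for $K=1$ the lemma is trivial since then $A$ is (a coset of) a subgroup and $A - 2^k\cdot A = A - A = A$, consistent with the parenthetical remark in the statement.
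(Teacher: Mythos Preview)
Your approach is workable but takes a different route from the paper's. You use the inclusion $2^j\cdot A \subset 2^{j-1}\cdot A + 2^{j-1}\cdot A$ and then try to control the resulting three-fold sumset $A - 2^{j-1}\cdot A - 2^{j-1}\cdot A$ inductively. This can indeed be made to work: with $U = -2^{j-1}\cdot A$, the Ruzsa triangle inequality (pivoting through $-U$) gives
\[
|A+U+U| \leq \frac{|A+U|\,|U+U-U|}{|U|},
\]
and since $U+U-U = 2^{j-1}\cdot(2A - A)$ one has $|U+U-U| \leq |2A - A| \leq K^3|A|$ by Pl\"unnecke--Ruzsa, yielding the desired factor $K^3$ per step (at least when the doubling map is injective, e.g.\ in $\Z$). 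Note that your sketched inequality with $|U-U|$ in place of $|U+U-U|$ is not what Ruzsa actually gives, so this step needed to be made precise.

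The paper's argument is slicker: rather than passing through an inclusion, it applies the Ruzsa triangle inequality directly with pivot $2\cdot A$,
\[
|A - 2^k\cdot A| \leq \frac{|A - 2\cdot A|\,|2\cdot A - 2^k\cdot A|}{|2\cdot A|},
\]
and then uses the \emph{identity} $2\cdot A - 2^k\cdot A = 2\cdot(A - 2^{k-1}\cdot A)$, so that $|2\cdot A - 2^k\cdot A| \leq |A - 2^{k-1}\cdot A|$. This gives a clean multiplicative recursion $|A - 2^k\cdot A|/|A| \leq (|A - 2\cdot A|/|A|)^k$ in one line, after which a single application of Pl\"unnecke--Ruzsa ($|A - 2\cdot A| \leq K^3|A|$) finishes the proof. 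The advantage is that no three-fold sumset ever appears and the bookkeeping you worried about simply does not arise; the key trick you missed is pivoting through $2\cdot A$ rather than $2^{j-1}\cdot A$.
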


Here we write $\lambda \cdot A$ for the dilate $\{ \lambda a : a \in A \}$. This result is Theorem 15 of Bukh \cite{bukh:dilates} specialized to the case $\lambda = -2$. We include the short proof for completeness.
\begin{proof}
By the Ruzsa triangle inequality, Lemma \ref{triangle_inequality}, we have that
\[ |A - 2^k \cdot A| \leq \frac{|A - 2 \cdot A| |2 \cdot A - 2^k \cdot A|}{|A|} \leq \frac{|A - 2 \cdot A| |A - 2^{k-1} \cdot A|}{|A|}. \]
Hence
\[ \frac{|A - 2^k \cdot A|}{|A|} \leq \left( \frac{|A - 2\cdot A|}{|A|} \right)^k. \]
The lemma then follows from the instance $|A-2\cdot A| \leq K^3|A|$ of the Pl\"unnecke-Ruzsa inequality, Theorem \ref{plunnecke-ruzsa}.
\end{proof}

\begin{proof}[Proof of Lemma \ref{local-sarkozy}]
It suffices to show that there are distinct elements $a,b \in A$ for which
\[ 2^j(a-b) \in A-A \text{ for each $j = 1,\ldots,k$}, \]
for then $[-2^k, 2^k]\cdot (a-b) \subset kA - kA$ by binary expansion. Upon rearranging, this is equivalent to there being distinct $a,b \in A$ and elements $x_j, y_j \in A$ for which
\begin{align*}
x_1 - 2a &= y_1 - 2b \\
x_2 - 4a &= y_2 - 4b \\
&\ \vdots \\
x_k - 2^k a  &= y_k - 2^k b.
\end{align*}
We claim that this system of equations must have a solution with $a \neq b$. Indeed, for each $(k+1)$-tuple of elements $(a,x_1,\ldots,x_k) \in A^{k+1}$, set
\[ f(a,x_1,\ldots,x_k) := (x_1-2a, x_2-4a, \ldots, x_k-2^k a). \]
The image of this function is a subset of $(S-2\cdot S)\times \cdots \times (S-2^k \cdot S)$, which by Lemma \ref{dilate-lemma} has size at most $K^{3k(k+1)/2} |S|^k$. So if $|A|^{k+1} > K^{3k(k+1)/2} |S|^k$, which is the case given our bound on $\delta$, then there must be two distinct tuples $\mathbf{a} = (a,x_1,\ldots,x_k)$ and $\mathbf{b} = (b,y_1,\ldots,y_k)$ in $A^{k+1}$ for which $f(\mathbf{a}) = f(\mathbf{b})$. Clearly such tuples must have $a \neq b$ and so provide a non-trivial solution to our system.
\end{proof}

\begin{remark}
One may wish to generate different types of structure depending on the group; for example, for problems in $\F_3^n$ it is more natural to generate subspaces instead of arithmetic progressions. Establishing such a result in $\F_3^n$ is relatively straightforward: it is easy to see that adding a symmetric subset of $\F_3^n$ to itself generates a subspace of dimension equal to the number of summands provided the set has enough linearly independent vectors.
\end{remark}

The proof of Lemma \ref{local-sarkozy} should be compared with an argument of the first-named author, Ruzsa and Schoen \cite{croot-ruzsa-schoen} that finds arithmetic progressions in single sumsets $A+B$, even when $A$ and $B$ are very sparse (much sparser than the sets considered in this paper).

Next we record a combination of Lemma \ref{local-sarkozy} and Proposition \ref{L_2-local} that will be useful to us in our proof of Roth's theorem. Recall that $[N] := \{1, \ldots, N\}$.

\begin{corollary}\label{main_L2_corollary}
Let $\delta \in (0,1)$ be a parameter and suppose that $A \subset [N]$ has size $\alpha N$, where $\alpha \geq 4N^{-\delta^2/36}$. Then there is a symmetric arithmetic progression $P \subset [-N/2,N/2]$ of length 
\[ |P| \geq \exp\left( \tfrac{1}{14} \left( \frac{\delta^2 \log{N}}{\log{4/\alpha}} \right)^{1/3} \right) \]
such that $0 \in P$ and, for each $t \in P$,
\[ \norm{ 1_A*1_A(x+t) - 1_A*1_A(x) }_2^2 \leq \delta^2 |A|^3. \]
\end{corollary}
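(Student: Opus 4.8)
The plan is to combine the $L^2$-almost-periodicity of $1_A*1_A$ supplied by Proposition \ref{L_2-local} with the structure-generation of Lemma \ref{local-sarkozy}. Let $k\geq 1$ be an integer to be fixed at the end, set $\epsilon:=\delta/k$, and let $M:=\lfloor N/(2k)\rfloor+1$, so that $N/(2k)\leq M\leq 2N$ and $k(M-1)\leq N/2$. Apply Proposition \ref{L_2-local} with $B:=A$ and $S:=[M]$: since $A+[M]\subseteq\{2,\dots,N+M\}$ we have $|A+[M]|\leq N+M\leq 2N=(2/\alpha)|A|$, so the hypothesis holds with $K:=2/\alpha$ (note that $A$ itself need not lie in $[M]$). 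This yields a set $T\subseteq[M]$ with
\[ |T|\geq \frac{M}{(4/\alpha)^{9k^2/\delta^2}} \]
such that $\norm{1_A*1_A(x+t)-1_A*1_A(x)}_2^2\leq(\delta/k)^2|A|^3$ for every $t\in T-T$. Writing a general element of $k(T-T)=kT-kT$ as $s_1+\cdots+s_k$ with each $s_i\in T-T$ and telescoping, the shift-invariance of $\norm{\cdot}_2$ on $\Z$ together with the triangle inequality upgrades this to
\[ \norm{1_A*1_A(x+t)-1_A*1_A(x)}_2^2\leq \delta^2|A|^3 \qquad \text{for every } t\in kT-kT. \]

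Next I would feed $T$ into Lemma \ref{local-sarkozy}, taking the interval $[M]$ for the ambient structured set: since $|[M]-[M]|=2M-1\leq 2|[M]|$, its doubling constant is $2$, and the lemma applies provided $|T|/M$ exceeds $2^{3k/2}M^{-1/(k+1)}$. Because $T\subseteq[M]$, every element of $kT-kT$ has absolute value at most $k(M-1)\leq N/2$, so the progression the lemma produces---symmetric, through $0$, of length at least $2^{k+1}$, with nonzero step in $T-T$---lies automatically in $[-N/2,N/2]$, and by the previous paragraph every $t$ in it satisfies the required $L^2$ estimate. Thus $P$ exists with all the stated properties and $|P|\geq 2^{k+1}$, and the whole matter reduces to choosing $k$ as large as the density condition permits.

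Taking logarithms, the density condition $|T|/M\geq 2^{3k/2}M^{-1/(k+1)}$ becomes
\[ \frac{\log M}{k+1}\geq \tfrac{3k}{2}\log 2+\frac{9k^2}{\delta^2}\log(4/\alpha). \]
Since $\delta<1$ and $\log(4/\alpha)\geq\log 4$, the first term on the right is dominated by the second for $k\geq 1$, and $\log M\geq\log N-\log(2k)\geq\tfrac12\log N$ for $N$ large (as $k$ will be of order $(\log N)^{1/3}$), so the condition is implied by one of the shape $k^3\leq c\,\delta^2\log N/\log(4/\alpha)$ for an absolute constant $c>0$. The hypothesis $\alpha\geq 4N^{-\delta^2/36}$ is precisely what makes $\delta^2\log N/\log(4/\alpha)\geq 36$, so the set of admissible $k$ contains an integer $\geq 1$; taking $k$ to be the largest admissible integer and using $|P|\geq 2^{k+1}=\exp\bigl((k+1)\log 2\bigr)$ gives $|P|\geq\exp\bigl(\tfrac{1}{14}(\delta^2\log N/\log(4/\alpha))^{1/3}\bigr)$ once all lower-order terms, the shortening to $[M]$, and the rounding of $k$ are absorbed into the constant $\tfrac{1}{14}$ (which is generous).

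I expect the only delicate point to be this final bookkeeping: there are two opposing pressures on $k$---it must be small enough that $T$ remains dense enough in $[M]$ for Lemma \ref{local-sarkozy} to fire, yet large enough that $2^{k+1}$ meets the advertised length---and one must check that the admissible window is nonempty, namely contains an integer $\geq 1$, which is exactly where the numerical constant $36$ in the hypothesis is used. The rest is routine; in particular the replacement of the ambient interval by the shorter $[M]$ costs nothing in the doubling hypothesis of Proposition \ref{L_2-local} and, crucially, leaves its conclusion intact, since that conclusion is a statement about $1_A*1_A$ rather than about $1_{A\cap[M]}*1_{A\cap[M]}$.
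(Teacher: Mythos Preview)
Your proof is correct and follows essentially the same approach as the paper's: apply Proposition~\ref{L_2-local} with $B=A$ and $\epsilon=\delta/k$, feed the resulting $T$ into Lemma~\ref{local-sarkozy}, and use the triangle inequality to pass from $T-T$ to $kT-kT$. The only difference is in how the progression is confined to $[-N/2,N/2]$: you take $S=[M]$ with $M\approx N/(2k)$ so that $kT-kT\subseteq[-N/2,N/2]$ automatically, whereas the paper takes $S=[N]$ and afterwards passes to a symmetric subprogression of length at least $2\lfloor 2^{k-1}/k\rfloor+1$; both devices cost only a lower-order factor absorbed by the constant $1/14$.
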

\begin{proof}
Set
\begin{align*}
k &:= \left\lfloor \left(\frac{\delta^2 \log{N}}{36\log{4/\alpha}}\right)^{1/3} \right\rfloor, \\
\epsilon &:= \delta/k,
\end{align*}
and apply Proposition \ref{L_2-local} to $A$ with $B=A$ and $S = [N]$. Note that we may certainly take $K = 2/\alpha$ since $A + [N] \subset [2N]$. Thus we get a set $T \subset [N]$ of size at least $(\alpha/4)^{9/\epsilon^2} N$ that has 
\[ \norm{ 1_A*1_A(x+t) - 1_A*1_A(x) }_2^2 \leq \epsilon^2 |A|^3 \]
for each $t \in T-T$. Now apply Lemma \ref{local-sarkozy} to the set $T$ to get a symmetric arithmetic progression $P \subset kT-kT$ of length at least $2^{k+1}+1$. By the triangle inequality we then have that any $t \in P$ gives
\[ \norm{ 1_A*1_A(x+t) - 1_A*1_A(x) }_2^2 \leq \delta^2 |A|^3; \]
this progression would thus satisfy the conclusion of the corollary were it not for the fact that it may not be contained in $[-N/2,N/2]$. It is however contained in $[-kN,kN]$, and so we may simply select a symmetric subprogression $P' \subset [-N/2,N/2]$ of $P$ of length at least $2\lfloor 2^{k-1}/k \rfloor + 1$; this progression will then do. Note that the condition on $\alpha$ comes from the requirement that $k$ be at least $1$.
\end{proof}

\section{Arithmetic progressions in sumsets}\label{longAPs}

In this section we shall prove Theorem \ref{A+B}. Our task is thus to exhibit, for two sets $A$ and $B$ in $[N]$, the existence of a long arithmetic progression in the sumset $A+B$. We do this by combining Theorem \ref{AB-struct-full}---a consequence of Proposition \ref{L_p-local}---with Lemma \ref{local-sarkozy}.

\begin{proof}[Proof of Theorem \ref{A+B}]
Set 
\[ k := \left\lfloor \tfrac{1}{10} \left( \frac{\alpha \log{N}}{\log{4/\beta}} \right)^{1/4} \right\rfloor \]
and
\[ n := 2^{k+1}. \]
Assume $k \geq 1$, for otherwise the conclusion of the theorem is trivial. Apply Theorem \ref{AB-struct-full} to $A$ and $B$ with these parameters and $S = [N]$. Since 
\[ A+B \subset B + [N] \subset [2N] \]
we may certainly take $K_2 = 2/\alpha$ and $K_3 = 2/\beta$, and we may take $K_1 = K_2$ by \eqref{energy-CS}. This gives us a set $T \subset [N]$ of size $\delta N$, where
\[ \delta \geq \exp\left(-300 k^2 (\log{4/\beta}) (\log{2n}) /\alpha \right), \]
such that $A+B$ contains a translate of any subset $P$ of $kT-kT$ of size at most $n$. By Lemma \ref{local-sarkozy} we can find an arithmetic progression $P$ of length $n$ in $kT-kT$ provided 
\[ \delta \geq 2^{3k/2}/N^{1/(k+1)}, \]
a condition that may be seen to hold by a short calculation.
\end{proof}
\begin{remark}
In contrast to previous proofs of results of the form of Theorem \ref{A+B}, there was no need for us to embed the sets $A$ and $B$ in a finite group $\Zmod{p}$ for some prime $p$ larger than $N$ in order for us to carry out our analysis. Had we performed this embedding into $\Zmod{p}$, however, we would have been able to use a slight simplification of Lemma \ref{local-sarkozy}, since we would only need to use it for $K=1$.
\end{remark}
\begin{remark}
By a minor modification of the proof of Proposition \ref{L_p-local-left}---using the triangle inequality to get rid of the terms $1_A*1_B(x)$ instead of $\mu_C*1_B(x)$---one can deduce that $P \subset A+C$ for a very small set $C \subset B$. One thus needs to translate $A$ by very few elements of $B$ in order to generate long arithmetic progressions.
\end{remark}

We similarly get the following local version of Theorem \ref{A+B}.

\begin{theorem}[Arithmetic progressions in small sumsets]
Suppose $A$ and $B$ are finite, non-empty subsets of an abelian group such that 
\[ |A+B| \leq K_1 |A| \text{ and } |A+B| \leq K_2 |B|. \]
Then $A+B$ contains an arithmetic progression of length at least
\[ \tfrac{1}{2} \exp\left( c \left( \frac{\log{|A|}}{K_1 \log{2K_2}} \right)^{1/4} \right), \]
where $c > 0$ is an absolute constant.
\end{theorem}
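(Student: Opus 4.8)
The plan is to mimic the proof of Theorem~\ref{A+B}, but with the set $A$ itself playing the role previously played by the interval $[N]$.

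First I would apply Theorem~\ref{AB-struct-full} to the pair $(A,B)$ with $S := A$. Its three hypotheses hold with tame constants: by \eqref{energy-CS} and $|A+B| \le K_1|A|$ we get $E(A,B) \ge |A|^2|B|^2/|A+B| \ge |A||B|^2/K_1$, so hypothesis (i) holds with parameter $K_1$; the same inequality $|A+B|\le K_1|A|$ gives hypothesis (ii) with parameter $K_1$; and since $|B+A| = |A+B| \le K_2|B|$, hypothesis (iii) holds with parameter $K_2$. So, for parameters $k,n$ to be chosen later, the theorem yields a set $T \subset A$ with
\[ |T| \ge \exp\bigl(-150\,k^2 K_1 (\log 2K_2)(\log 2n)\bigr)|A| =: \delta|A| \]
such that $A+B$ contains a translate of every subset of $kT - kT$ of size at most $n$.

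Next I would generate structure by set-addition exactly as in Corollary~\ref{main_L2_corollary}, applying Lemma~\ref{local-sarkozy} with $T$ in the role of ``$A$'' and $A$ in the role of ``$S$''. For this I need $A$ itself to have small doubling, and here the hypothesis $|A+B| \le K_2|B|$ is used a second time: by the Pl\"unnecke--Ruzsa inequality (Theorem~\ref{plunnecke-ruzsa}) applied with the two sets interchanged, $|A-A| \le K_2^2|B|$, and $|B| \le |A+B| \le K_1|A|$ gives $|A-A| \le K_1K_2^2|A|$. Thus Lemma~\ref{local-sarkozy} applies with $K := K_1K_2^2$, and provided $\delta > (K_1K_2^2)^{3k/2}/|A|^{1/(k+1)}$ it produces a symmetric arithmetic progression of length at least $2^{k+1}$ inside $kT-kT$. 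Setting $n := 2^{k+1}$, a subprogression of length exactly $n$ then has a translate inside $A+B$; since translates of arithmetic progressions are arithmetic progressions of the same length, $A+B$ contains an arithmetic progression of length $2^{k+1}$. Finally I would choose $k := \bigl\lfloor (c_0 \log|A| / (K_1 \log 2K_2))^{1/4}\bigr\rfloor$ for a small absolute constant $c_0$ (the conclusion being vacuous if $k<1$, as in the proof of Theorem~\ref{A+B}). Using $\log 2n = (k+2)\log 2$ one has $-\log\delta = O(k^3 K_1 \log 2K_2)$, and the hypothesis of Lemma~\ref{local-sarkozy} reduces to $-\log\delta + \tfrac{3k}{2}\log(K_1K_2^2) \le \tfrac{\log|A|}{k+1}$; since the choice of $k$ makes $k^4$ a small multiple of $\log|A|/(K_1\log 2K_2)$, both terms on the left are bounded by a small multiple of $\log|A|/k$, so the inequality holds once $c_0$ is small enough. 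The resulting progression has length $2^{k+1} \ge \tfrac12\exp\bigl(c(\log|A|/(K_1\log 2K_2))^{1/4}\bigr)$ for a suitable absolute constant $c$, as required.

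The argument contains no deep new step; the main point requiring care is the bookkeeping of constants, ensuring that $K_1$ and $K_2$ enter the final exponent precisely in the combination $K_1\log 2K_2$. This is what dictates using $|A+B|\le K_1|A|$ for the energy and doubling inputs to Theorem~\ref{AB-struct-full} and $|A+B|\le K_2|B|$ only for the covering input and for controlling $|A-A|$; and it requires checking that the extra $\tfrac{3k}{2}\log(K_1K_2^2)$ term coming from Lemma~\ref{local-sarkozy} is of lower order once $k$ is of order $(\log|A|/(K_1\log 2K_2))^{1/4}$, which is the one nontrivial ``short calculation'' needed to close the proof.
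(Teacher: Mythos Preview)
Your proposal is correct and follows essentially the same route as the paper: apply Theorem~\ref{AB-struct-full} with $S=A$ using the parameters $(K_1,K_1,K_2)$, bound the doubling of $A$ via Pl\"unnecke--Ruzsa to get $K_1K_2^2$, then feed $T\subset A$ into Lemma~\ref{local-sarkozy} with $k$ of order $(\log|A|/(K_1\log 2K_2))^{1/4}$. The only cosmetic difference is that the paper bounds $|A+A|$ rather than $|A-A|$, but Lemma~\ref{local-sarkozy} accepts either hypothesis and the Pl\"unnecke--Ruzsa bound is the same.
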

\begin{proof}
This proof is virtually the same as that above. Set
\[ k := \left\lfloor \tfrac{1}{10} \left( \frac{\log{|A|}}{K_1 \log{2K_2}} \right)^{1/4} \right\rfloor \]
and $n := 2^{k+1}$. As before we apply Theorem \ref{AB-struct-full} to $A$ and $B$ with these parameters, but this time with $S = A$. Thus we get a set $T \subset A$ of size
\[ |T| \geq \exp\left(-150 k^2 K_1 (\log{2K_2}) (\log{2n}) \right) |A| \]
such that $A+B$ contains any subset of $kT-kT$ of size at most $n$. By the Pl\"unnecke-Ruzsa inequality, Theorem \ref{plunnecke-ruzsa}, we have that $|A+A| \leq K_1 K_2^2 |A|$. Another routine calculation now shows that we can apply Lemma \ref{local-sarkozy} to $T \subset A$ to find an arithmetic progression of length $n$ in $kT-kT$, which yields the result. (Note again that the theorem is trivial if $k < 1$.)
\end{proof}
\begin{remark}
Recall that arithmetic progressions may be degenerate in some groups; consider for example the group $\F_2^n$.
\end{remark}
\begin{remark}
Other local versions of this result are possible: we could for example work relative to a set $S$ of small doubling such that $|B+S| \leq K|B|$; this would yield slightly better bounds.
\end{remark}

We cannot mention this topic without drawing the reader's attention to a remarkable construction \cite{ruzsa:longAPs} of Ruzsa that places a limit on the potential strength of results of the above form:

\begin{theorem}
Let $\epsilon > 0$. For every prime $p > p_0(\epsilon)$ there is a symmetric set $A \subset \Zmod{p}$ of size at least $(1/2 - \epsilon)p$ such that $A+A$ contains no arithmetic progression of length 
\[ \exp\left( (\log{p})^{2/3+\epsilon} \right). \]
\end{theorem}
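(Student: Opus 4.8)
The plan is to follow the probabilistic approach of Ruzsa \cite{ruzsa:longAPs}. Write $\rho := \exp\big((\log p)^{2/3+\epsilon}\big)$ for the length to be forbidden. Two reductions are useful at the outset. First, since any sub-progression of an arithmetic progression is again an arithmetic progression, it suffices to produce a symmetric $A$ with $\abs{A} \ge (1/2-\epsilon)p$ such that $A+A$ misses at least one element of every progression of length exactly $\rho$; that is, such that $\Zmod{p}\setminus(A+A)$ meets every such progression. Secondly, one is essentially forced to take $\abs{A} \le p/2$, since $\abs{A} > (p+1)/2$ gives $A+A = \Zmod{p}$ by the Cauchy--Davenport theorem, and then $A+A$ contains every progression. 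Consequently $\Zmod{p}\setminus(A+A)$ has density only $2\epsilon + o(1)$ and must be very evenly spread.

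One natural way to set up the construction is as follows. Fix a parameter $k$ (the analysis will force $k$ to be of order $(\log p)^{1/3}$) and a threshold $\tau$, choose frequencies $b_1,\dots,b_k \in \Zmod{p}$ at random, and set
\[ A := \Big\{ x \in \Zmod{p} : \sum_{i=1}^{k} \cos(2\pi b_i x/p) \ge \tau \Big\}, \]
which is symmetric since $f(x) := \sum_i \cos(2\pi b_i x/p)$ is even. The threshold does two jobs. On the one hand, for fixed $x\ne 0$ the value $f(x)$ is, over the random $b_i$, a sum of $k$ independent bounded mean-zero variables, so its distribution is close to symmetric about $0$; taking $\tau$ of order $\epsilon\sqrt{k}$ then keeps $\abs{A} \ge (1/2-\epsilon)p$. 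On the other hand, from $f(y)+f(w-y) = 2\sum_i \cos(\pi b_i w/p)\cos(\pi b_i(2y-w)/p)$ one sees that if $\sum_i \abs{\cos(\pi b_i w/p)} < \tau$ then $f(y)+f(w-y) < 2\tau$ for \emph{every} $y$, so $f(y)$ and $f(w-y)$ cannot both be at least $\tau$; hence
\[ \Zmod{p}\setminus(A+A) \ \supseteq\ W := \Big\{ w \in \Zmod{p} : \sum_{i=1}^{k} \abs{\cos(\pi b_i w/p)} < \tau \Big\}, \]
a ``Bohr-type'' set (an intersection of $k$ arithmetic progressions) supported where all the $b_i w$ lie close to $p/2$, of size roughly $p\,(c\epsilon/\sqrt{k})^{k}/k!$.

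It then remains to show that, for a suitable random choice of the $b_i$, both (a) $\abs{A} \ge (1/2-\epsilon)p$ and (b) $W$ meets every arithmetic progression of length $\rho$. Part (a) is a first- and second-moment computation for $\abs{A} = \sum_x 1_{f(x)\ge\tau}$: the expectation is $(1/2 - O(\epsilon))p$ by the central limit theorem, and the variance is small because the relevant pairwise correlations are negligible. Part (b) is the heart of the matter: a progression $P = \{a+jd : 0 \le j < \rho\}$ can avoid $W$ only if its step $d$ is a simultaneous near-period of all the $b_i$, i.e.\ $\max_i \Tnorm{b_i d/p} \lesssim 1/\rho$ with $d\ne 0$; by Dirichlet's box principle no such $d$ exists once $\rho^{k} \gtrsim p$, and for a random choice of the $b_i$ this holds with positive probability in that range. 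A union bound over the at most $p^2$ progressions, bounding the probability that a fixed progression avoids $W$ by roughly $(1-\abs{W}/p)^{\rho}$, then finishes the argument.

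I expect the main obstacle to be the calibration of parameters in part (b), which is exactly where the exponent $2/3$ is forced and which is the real substance of Ruzsa's argument. Ruling out a bad near-period wants $\rho \gtrsim p^{1/k} = \exp((\log p)/k)$, which pushes $k$ down; keeping $W$ large enough that a ``spreading'' progression of length $\rho$ must hit it wants $\rho \gtrsim p(\log p)/\abs{W}$, and $\abs{W}/p$ decays super-exponentially in $k$ (there are $k$ constraints, and the budget $\tau \asymp \epsilon\sqrt{k}$ is split among them, so the effective window per coordinate is only of order $\epsilon/\sqrt{k}$), which pushes $k$ up; one must also check that the orbit $(b_i x/p)_i$ equidistributes in $\T^k$ finely enough, tying $k$ to $\log p$ as well. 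Ruzsa's optimization of these competing demands fixes $k$ at order $(\log p)^{1/3}$ and $\rho$ at order $\exp((\log p)^{2/3})$, and the unavoidable loss of quality in the estimates (the $\epsilon$'s in the density and in the Bohr-window, the $k!$ in $\abs{W}$) is what produces the extra $+\epsilon$ in the exponent and the restriction $p > p_0(\epsilon)$. Verifying that one random choice of $b_1,\dots,b_k$ works simultaneously for (a) and for all $\le p^2$ instances of (b) is the technical crux.
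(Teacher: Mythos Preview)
The paper does not contain a proof of this theorem. It is quoted there only as a result of Ruzsa, with a citation to \cite{ruzsa:longAPs}, in order to indicate an upper limit on how strong theorems of the form of Theorem~\ref{A+B} can possibly be. There is therefore nothing in the paper to compare your proposal against.

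For what it is worth, your sketch does track the genuine shape of Ruzsa's construction: a symmetric set defined by thresholding a random trigonometric sum $f(x)=\sum_i\cos(2\pi b_i x/p)$, the observation that $\Zmod{p}\setminus(A+A)$ contains a Bohr-type set $W$, and a balancing of parameters that lands $k$ near $(\log p)^{1/3}$. One point to be careful with: your appeal to ``Dirichlet's box principle'' in part (b) is in the wrong direction. Dirichlet's theorem \emph{produces} simultaneous near-periods; it does not rule them out. The reason a random choice of $b_1,\dots,b_k$ has no nonzero common near-period $d$ with $\max_i\Tnorm{b_i d/p}\lesssim 1/\rho$ is a straightforward union bound (each fixed $d$ is bad with probability about $\rho^{-k}$, and there are fewer than $p$ choices of $d$), not Dirichlet. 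More substantially, the step ``$P$ avoids $W$ only if $d$ is a simultaneous near-period'' is not literally true and is where the real work lies: one needs an equidistribution argument for the orbit $\{(b_i(a+jd)/p)_i : 0\le j<\rho\}$ in $\T^k$, which is exactly the technical crux you flag but do not carry out. If you want a complete proof you will need to consult \cite{ruzsa:longAPs} for that part.
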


Let us also mention that if one only wishes to find arithmetic progressions of length about $\log N$ in $A+B$ then better results are available: one can work with much sparser sets than those considered in this paper by using the results in \cite{croot-ruzsa-schoen}.

\section{Roth's theorem}\label{roth}
In this section we give our proof of Theorem \ref{roths_theorem}. We shall employ a density-increment strategy, showing that if $A \subset \{1,\ldots,N\}$ is large and contains no three-term progressions then we can find a long arithmetic progression on which $A$ has significantly increased density. We can then iterate this argument in order to obtain a contradiction.

Let us introduce some notation before we begin. We denote the sum of a function $f : \Z \to \R$ with finite support over the three-term progressions in $\Z$ by $T_3(f)$; thus
\[ T_3(f) := \ssum_{x,y \in \Z} f(x)f(y)f(2y-x) = \ssum_{y} f(y)(f*f)(2y). \]
Note that we may drop parts of subscripts when the meaning is clear. If $f = 1_A$ is the indicator function of a set then $T_3(f)$ is simply the number of three-term progressions in $A$. Note that this includes trivial (constant) three-term progressions and that it counts $(x, x+d, x+2d)$ separately from $(x+2d, x+d, x)$. We shall use the notation $\mu_X$ to denote the normalized indicator function $1_X/|X|$ of a finite set $X$. For a subset $A$ of $X$ we shall say that $A$ has \emph{density $\alpha$ relative to $X$} if $|A| = \alpha |X|$; when $X$ is clear from the context we shall refer to $\alpha$ simply as the \emph{density} of $A$. Finally, we write $\E_{x \in X} f(x) = \frac{1}{|X|} \sum_{x \in X} f(x)$ for the average of $f$ over $X$.

The core of our proof of Roth's theorem lies in the following proposition.

\begin{proposition}\label{T3-approximation}
Let $\epsilon > 0$ and suppose that $A \subset [N]$ has size $\alpha N$. Then there is a symmetric arithmetic progression $P \subset [-N/8,N/8]$ of length at least 
\[ |P| \geq c \exp\left( c \left( \frac{\epsilon^2\log{N}}{\log{4/\alpha}} \right)^{1/3} \right), \]
where $c > 0$ is an absolute constant, such that
\[ | T_3(1_A * \mu_P) - T_3(1_A) | \leq \epsilon |A|^2. \]
\end{proposition}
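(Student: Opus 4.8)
The plan is to lift $P$ almost directly out of Corollary~\ref{main_L2_corollary}: once we have a long progression $P$ every translate by which barely moves $1_A*1_A$ in $L^2$, the claim that $T_3$ is nearly unchanged by convolving with $\mu_P$ is a short formal manipulation. We may assume $\epsilon\le 1$. If $\epsilon^2\log N<576\log(4/\alpha)$ --- exactly the regime where Corollary~\ref{main_L2_corollary} is vacuous --- then $(\epsilon^2\log N/\log(4/\alpha))^{1/3}$ is bounded by an absolute constant, so for $c>0$ small enough the asserted lower bound on $|P|$ is at most $1$ and we may take $P:=\{0\}$, for which $1_A*\mu_P=1_A$ and the two $T_3$-values agree. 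Otherwise apply Corollary~\ref{main_L2_corollary} with $\delta:=\epsilon/4$ to get a symmetric arithmetic progression $P_0\subset[-N/2,N/2]$ through $0$ of length $|P_0|\ge\exp\bigl(\tfrac1{14}((\epsilon/4)^2\log N/\log(4/\alpha))^{1/3}\bigr)$ with $\norm{1_A*1_A(x+t)-1_A*1_A(x)}_2\le(\epsilon/4)|A|^{3/2}$ for every $t\in P_0$. Passing to the central symmetric subprogression of $P_0$ of a quarter the length and the same common difference produces a progression $P\subset[-N/8,N/8]$ with $0\in P$, $|P|\ge c'|P_0|$ for an absolute $c'>0$, and the same $L^2$-bound for all $t\in P$.

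For the main estimate I would write $T_3(f)=\Lambda(f,f,f)$ for the trilinear form $\Lambda(f_1,f_2,f_3):=\sum_{x,d}f_1(x)f_2(x+d)f_3(x+2d)$, and note the change-of-variables identity
\[ \Lambda(f_1,f_2,f_3)=\sum_{u}f_2(u)\,(f_1*f_3)(2u), \]
where $*$ is the additive convolution on $\Z$. Since $1_A*\mu_P=\E_{t\in P}1_A(\,\cdot\,-t)$, multilinearity gives $T_3(1_A*\mu_P)-T_3(1_A)=\E_{t_1,t_2,t_3\in P}\bigl[\Lambda(1_A(\cdot-t_1),1_A(\cdot-t_2),1_A(\cdot-t_3))-\Lambda(1_A,1_A,1_A)\bigr]$. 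Using the identity above together with $1_A(\cdot-a)*1_A(\cdot-b)=(1_A*1_A)(\cdot-a-b)$ and then shifting the summation variable, the bracket becomes
\[ \sum_{u}1_A(u)\bigl[(1_A*1_A)(2u+s)-(1_A*1_A)(2u)\bigr],\qquad s:=2t_2-t_1-t_3, \]
which by the Cauchy--Schwarz inequality has modulus at most $|A|^{1/2}\norm{1_A*1_A(\,\cdot\,+s)-1_A*1_A}_2$; here one uses that $\sum_u|(1_A*1_A)(2u+s)-(1_A*1_A)(2u)|^2$ is a subsum of $\sum_{z\in\Z}|(1_A*1_A)(z+s)-(1_A*1_A)(z)|^2$.

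It remains to bound $\norm{1_A*1_A(\,\cdot\,+s)-1_A*1_A}_2$ for $t_1,t_2,t_3\in P$, and this is the one place where one must be mindful that Corollary~\ref{main_L2_corollary} only controls translates by elements of $P$ itself, not by sums of such. Set $\phi(t):=\norm{1_A*1_A(\,\cdot\,+t)-1_A*1_A}_2$. Translation-invariance of $\norm{\cdot}_2$ on $\Z$ makes $\phi$ symmetric, $\phi(-t)=\phi(t)$, and subadditive, $\phi(t+t')\le\phi(t)+\phi(t')$; and $\phi(t)\le(\epsilon/4)|A|^{3/2}$ for $t\in P$ by the first paragraph. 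Writing $s=t_2+t_2+(-t_1)+(-t_3)$ as a sum of four elements of $P$ (here $P$ symmetric is used) and applying subadditivity gives $\phi(s)\le 4\cdot(\epsilon/4)|A|^{3/2}=\epsilon|A|^{3/2}$, so each bracketed term has modulus at most $\epsilon|A|^2$, hence so does the average $T_3(1_A*\mu_P)-T_3(1_A)$. Finally $|P|\ge c'|P_0|\ge c'\exp\bigl(\tfrac1{14}((\epsilon/4)^2\log N/\log(4/\alpha))^{1/3}\bigr)$ has the form $c\exp\bigl(c(\epsilon^2\log N/\log(4/\alpha))^{1/3}\bigr)$ once $c>0$ is chosen absolute and sufficiently small (which also covers the degenerate case). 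The only genuinely fiddly point is the factor $2$ in the shift $s=2t_2-t_1-t_3$, forcing us to control a translate of $1_A*1_A$ by a sum of elements of $P$ rather than a single one; subadditivity of $\phi$ disposes of it, and the rest is bookkeeping.
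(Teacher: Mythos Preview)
Your proof is correct and follows essentially the same route as the paper's: apply Corollary~\ref{main_L2_corollary}, pass to a short central subprogression $P$, expand $T_3$ via the identity $\Lambda(f_1,f_2,f_3)=\sum_u f_2(u)(f_1*f_3)(2u)$, and control the shift $s=2t_2-t_1-t_3$ by Cauchy--Schwarz. The only cosmetic difference is in how the factor~$4$ is handled: the paper takes $\delta=\epsilon$ in the corollary and shrinks to $P$ with $4P\subset Q$, so that $s\in Q$ directly, whereas you take $\delta=\epsilon/4$ and invoke subadditivity of $\phi$ four times; these are equivalent rearrangements of the same triangle-inequality step.
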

\begin{proof}
Let $Q$ be the arithmetic progression obtained from Corollary \ref{main_L2_corollary} applied to $A$ with parameter $\epsilon^2$; thus $Q$ is large, $Q = -Q$ and $Q \subset [-N/2,N/2]$. Let $P$ be a symmetric subprogression of $Q$ of length at least $|Q|/8$ such that $4P \subset Q$; thus $P \subset [-N/8,N/8]$. We claim that this $P$ satisfies the conclusion of the proposition. Indeed,
\[ T_3(1_A*\mu_P) = \E_{(y,z,w) \in P^3} \ssum_x 1_A(x) 1_A*1_A(2x+2y-z-w) \]
and so 
\begin{align*}
| T_3(1_A*\mu_P) - T_3(1_A) | &= \left| \E_{y,z,w \in P} \ssum_x 1_A(x) \big( 1_A*1_A(2x+2y-z-w) - 1_A*1_A(2x) \big) \right| \\
&\leq |A|^{1/2}\ \E_{\substack{z,w \in P \\ y \in 2\cdot P}} \norm{1_A*1_A(x -y-z-w) - 1_A*1_A(x)}_2 \\
&\leq \epsilon |A|^2,
\end{align*}
these inequalities being instances of the triangle and Cauchy-Schwarz inequalities and the fact that $P+P+2\cdot P \subset Q$.
\end{proof}

We also require a preliminary lemma about $T_3$. The following lemma gives a lower bound for the minimal number of three-term progressions that a set (or a function) can contain given upper bounds on the function $r_3$; it is a quantitative version of an averaging argument of Varnavides \cite{varnavides}.

\begin{lemma}[Varnavides' theorem]\label{varnavides_thm}
Let $N$ be a positive integer and suppose that $f : [N] \to [0,1]$ is a function with average $\E_{x \in [N]} f(x) = \alpha$. Then, for any positive integer $M \leq N^{1/10}/2$,  
\[ T_3(f) \geq \left(\alpha - \tfrac{r_3(M)+2}{M}\right)M^{-4} N^2. \]
\end{lemma}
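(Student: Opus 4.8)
The plan is to count three-term progressions in $[N]$ by averaging $f$ along many short subprogressions of length $M$ and using the hypothetical upper bound on $r_3$ on each of them. First I would fix a length-$M$ arithmetic progression $P = \{a, a+d, \ldots, a+(M-1)d\}$ contained in $[N]$ with nonzero common difference $d$, and observe that the restriction of $f$ to $P$ is a function $[M] \to [0,1]$, so the definition of $r_3(M)$ (applied after discarding the trivial progressions) gives a lower bound for the number of three-term progressions of $P$ on which $f$ is supported: if $\sum_{x \in P} f(x) = \beta M$ then, because any subset of $\{1,\ldots,M\}$ of size exceeding $r_3(M)$ contains a genuine three-term progression, a short averaging/greedy argument shows that $f$ restricted to $P$ contains at least roughly $\bigl(\beta - (r_3(M)+2)/M\bigr)M^2$ three-term progressions counted with the conventions of $T_3$ (the $+2$ absorbing the trivial progressions and boundary effects). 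Summing the resulting contributions $\sum_{y,z,w \in P} f(x)f(y)f(2y-x)$ over all such $P$ and relating this to $T_3(f)$ is the crux.

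Next I would make the counting precise. The natural device is to introduce the normalized count
\[ T_3(f) = \ssum_{x, d} f(x) f(x+d) f(x+2d), \]
and to write, for each admissible pair $(a,d)$ with $1 \le d$ and $a, a+(M-1)d \in [N]$, the inner sum $T_3(f|_P)$ where $P = P_{a,d}$; then average over $(a,d)$. A given three-term progression $(x, x+d', x+2d')$ of $[N]$ lies inside $P_{a,d}$ precisely when $d \mid d'$ and $a$ lies in an interval of length about $M - 2(d'/d)$; the key point is that each progression is counted with multiplicity that I can bound uniformly, and the number of admissible pairs $(a,d)$ is at most something like $MN$. After normalizing by this count and by $M^{-2}$ (the number of progression-slots inside a fixed $P$), one arrives at a bound of the shape $T_3(f) \ge \bigl(\alpha - (r_3(M)+2)/M\bigr) M^{-4} N^2$; the restriction $M \le N^{1/10}/2$ is exactly what is needed to guarantee that enough of the $P_{a,d}$ lie inside $[N]$ (so that the average of $\beta$ over the family is at least $\alpha - o(1)$, with the error already swallowed into the stated bound) and to keep the double-counting multiplicities under control.

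The main obstacle, and the step I would spend the most care on, is the bookkeeping in the averaging: one must check that when $(a,d)$ ranges over admissible pairs the average value of $\E_{x \in P_{a,d}} f(x)$ is at least $\alpha$ (or at least $\alpha$ minus a negligible loss that the constants in the statement already accommodate), and simultaneously that no genuine progression of $[N]$ is overcounted by more than the factor built into the $M^{-4} N^2$ normalization. This is the classical Varnavides averaging, but keeping the constants explicit — so that the final bound really is $\bigl(\alpha - (r_3(M)+2)/M\bigr)M^{-4}N^2$ with the clean constraint $M \le N^{1/10}/2$ — requires handling the edge progressions (those near the ends of $[N]$ or with $d$ close to $N/M$) honestly; these contribute the ``$+2$'' and the power $N^{1/10}$. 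I expect the rest of the argument to be routine once this combinatorial accounting is set up correctly.
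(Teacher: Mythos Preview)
Your averaging/double-counting framework over length-$M$ progressions $P_{a,d}\subset[N]$ is precisely the argument the paper invokes. Two points need repair, however.

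First, the per-progression bound you assert---that $f|_P$ supports at least $(\beta - (r_3(M)+2)/M)M^2$ three-term progressions---is too strong. Even for an indicator $1_A$ with $|A\cap P|=\beta M$, the greedy removal argument yields only about $\beta M - r_3(M)$ nontrivial progressions in $A\cap P$, which is linear in $M$, not quadratic. Fortunately the Varnavides count needs far less: one only uses that each $P$ with $|A\cap P|>r_3(M)$ contains \emph{at least one} nontrivial three-term progression. The factor $M^{-4}$ in the conclusion then arises entirely on the overcount side of the double-count (a fixed three-term progression of $[N]$ lies in at most $O(M^2)$ of the $P_{a,d}$, of which there are $O(N^2/M)$ in total), not from any per-$P$ richness.

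Second, and this is the genuine gap, you do not explain how $r_3(M)$---a statement about \emph{sets}---is brought to bear on a $[0,1]$-valued $f$; the ``short averaging/greedy argument'' you invoke does not go through once $f$ is not an indicator. The paper handles this by a clean separation: prove the lemma first for $f=1_A$ via the double-count just described, and then pass to general $f$ by the probabilistic device of forming a random set $A\subset[N]$ with $\P(x\in A)=f(x)$ independently for each $x$. One then has $\E T_3(1_A)=T_3(f)+O(N)$, the discrepancy coming only from the diagonal triples $(x,x,x)$, and this $O(N)$ is negligible against the $M^{-4}N^2$ scale under the hypothesis $M\le N^{1/10}/2$.
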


The proof of this lemma proceeds via a double-counting argument and can be found in \cite{croot-sisask:roth} for the case when $f$ is the indicator function of a set. In order to pass from a result about sets, like the lemma stated in \cite{croot-sisask:roth}, to a result about a function $f$ one can employ a standard probabilistic trick of defining a random set $A$ in $[N]$ by letting $x \in A$ with probability $f(x)$ independently for each $x$. See \cite[Exercise 10.1.7]{tao-vu} for more details.

We are now ready to proceed with the main body of the proof. We shall prove Theorem \ref{roths_theorem} in the following equivalent form.

\begin{theorem}
For any $c > 0$ there are positive numbers $C$ and $N_0$ such that
\[ r_3(N) \leq C N/(\log\log{N})^c \]
for all $N \geq N_0$.
\end{theorem}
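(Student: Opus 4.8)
The plan is to run a density-increment argument, using Proposition~\ref{T3-approximation} to control how the number of three-term progressions behaves when we pass to a long subprogression. Fix $c > 0$; we want to show $r_3(N) \ll N/(\log\log N)^c$. Suppose $A \subset [N]$ has density $\alpha$ and contains no non-trivial three-term progressions, so $T_3(1_A) = |A| = \alpha N$ (only the trivial progressions are counted). By Varnavides' theorem (Lemma~\ref{varnavides_thm}), any function $f : [N'] \to [0,1]$ of average $\alpha$ has $T_3(f) \geq (\alpha - (r_3(M)+2)/M) M^{-4} (N')^2$ for $M \leq (N')^{1/10}/2$; the point is that if $A$ were \emph{spread out} on a long progression at density near $\alpha$, it would be forced to have many three-term progressions, contradicting $T_3(1_A) = \alpha N$ being small. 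The mechanism for exploiting this is Proposition~\ref{T3-approximation}: applying it with a suitable $\epsilon$ comparable to $\alpha$, we obtain a symmetric progression $P$ of length $L \geq c\exp\!\big(c(\epsilon^2 \log N/\log(4/\alpha))^{1/3}\big)$ with $|T_3(1_A * \mu_P) - T_3(1_A)| \leq \epsilon |A|^2$. Since $T_3(1_A) = \alpha N$ is tiny compared with $|A|^2 = \alpha^2 N^2$ (as $\alpha \geq 1/N$ forces $\alpha N \le \alpha^2 N^2$ only when... — more carefully, one chooses $\epsilon$ so that $\epsilon|A|^2$ dominates $T_3(1_A)$ appropriately), it follows that $T_3(1_A*\mu_P)$ is also small, of size $O(\epsilon|A|^2)$.

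Next I would interpret the smallness of $T_3(1_A * \mu_P)$ via Varnavides applied to the smoothed function $f := 1_A * \mu_P$, or better, via a standard pigeonholing onto a translate of $P$. The function $1_A * \mu_P$ has $\ell^1$-norm $|A|$, is supported in an interval of length $N + |P|$, and takes values in $[0,1]$; its average over that interval is essentially $\alpha$. If $T_3(1_A*\mu_P)$ is much smaller than what Varnavides would predict for a density-$\alpha$ function on a progression of length $\sim L$, then $1_A*\mu_P$ cannot be close to its average $\alpha$ everywhere — there must be a translate $x + P$ (i.e. an interval the length of $P$) on which the average of $1_A * \mu_P$, and hence the average of $1_A$ on a slightly larger interval, exceeds $\alpha(1 + \eta)$ for some fixed increment $\eta > 0$. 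In other words, $A$ has density at least $\alpha(1+\eta)$ on some subprogression (here just a subinterval) of length comparable to $L$. This is the density-increment step. One has to be a little careful: $P$ here is a progression with some step $d$, so passing to $x + P$ and then rescaling by $d^{-1}$ gives a genuine copy of $[L']$ inside which $A$, after the affine change of variables $a \mapsto (a - x)/d$, has increased density; the progression-freeness of $A$ is preserved under this affine map.

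Then I would iterate. Starting from $A_0 = A \subset [N_0 = N]$ with density $\alpha_0 = \alpha$, each step produces $A_{j+1} \subset [N_{j+1}]$ with $\alpha_{j+1} \geq \alpha_j(1 + \eta)$ and $N_{j+1} \geq c\exp\!\big(c(\alpha_j^2 \log N_j / \log(4/\alpha_j))^{1/3}\big)$, still progression-free. The density at most doubles $O(\log(1/\alpha))$ times before exceeding $1$, which is impossible, so the process must terminate — meaning at some stage the hypothesis of Proposition~\ref{T3-approximation} (effectively $\alpha_j \geq 4N_j^{-\alpha_j^2/\text{const}}$, i.e. $N_j$ is not too small relative to $\alpha_j$) fails. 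Tracking how $N_j$ shrinks: roughly $\log N_{j+1} \gtrsim (\alpha_j^2 \log N_j / \log(4/\alpha_j))^{1/3}$, so $\log N_j$ is driven down through a triple-iterated-exponential-type recursion, but with the exponent $1/3$ the right bookkeeping shows that after $O(\log(1/\alpha))$ steps we need $\log N_0$ to have been at least something like $(\log(1/\alpha))^{\text{const}} \cdot (\text{iterated exponential in }O(\log(1/\alpha)))$ — the upshot being that if $\alpha \gg 1/(\log\log N)^c$ fails... Concretely, the iteration forces $\alpha \leq 1/(\log\log N)^{c}$ for the prescribed $c$ once $N$ is large, which is exactly the claimed bound on $r_3(N)$.

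The main obstacle will be the bookkeeping in the last paragraph: one must choose the parameters ($\epsilon$ in terms of the current density $\alpha_j$, and the density increment $\eta$ — presumably a small absolute constant like $\eta = 1/4$ or so, since $T_3(1_A)=\alpha N$ is negligible against $\alpha^2 N^2$ whenever $\alpha \gg 1/N$, which always holds in the regime of interest) and then verify that the recursion $\log N_{j+1} \gtrsim (\alpha_j^2 \log N_j/\log(4/\alpha_j))^{1/3}$, run for $j = 0, 1, \ldots, O(\alpha^{-1}\text{-free count}) = O(\eta^{-1}\log(1/\alpha))$ steps, together with the termination condition, yields the stated quantitative bound with $\omega(N) \to \infty$. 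The cube root in the exponent, rather than a full exponential gain, is what prevents a bound better than a power of $\log\log N$; making sure the power one extracts is an arbitrary fixed $c$ (as the equivalent reformulation demands) requires only that the number of iteration steps, $O(\log(1/\alpha))$, be small compared to $\log\log\log N$ or so — a routine but slightly delicate comparison of growth rates that I would carry out at the end.
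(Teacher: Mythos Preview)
Your overall architecture is exactly the paper's: apply Proposition~\ref{T3-approximation} with $\epsilon$ a small multiple of $\alpha$, compare the resulting upper bound on $T_3(1_A*\mu_P)$ against the Varnavides lower bound (Lemma~\ref{varnavides_thm}), deduce that $1_A*\mu_P$ must spike somewhere and hence obtain a density increment on a translate of $P$, rescale, and iterate. The mechanics you sketch (including the affine rescaling and the recursion for $\log N_j$) are correct.

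There is, however, a real gap in your final step. With a \emph{fixed} increment factor $1+\eta$ the iteration gives only a \emph{fixed} exponent of $\log\log N$: the number of steps is $K \leq \eta^{-1}\log(1/\alpha)$, while $\log\log N_j$ shrinks by a constant factor (say $1/8$) at each step, so termination forces $8^K \gg \log\log N$ and hence $\log(1/\alpha) \geq c_0(\eta)\log\log\log N$ with $c_0(\eta)$ bounded. Your claim that ``the power one extracts is an arbitrary fixed $c$'' because the number of steps is small compared with $\log\log\log N$ is backwards: the number of steps is \emph{comparable} to $\log\log\log N$, and the exponent you recover is precisely the ratio of the two implicit constants, which depends on $\eta$.

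The paper resolves this by a bootstrap you do not mention. To invoke Varnavides with an increment factor $\delta^{-1}$ one needs to know that $r_3(M)/M < \tfrac{4}{5}\delta$ for some $M$; for $\delta$ close to $1$ this follows from the numerical fact $r_3(10)=5$, and this yields the result for one specific exponent $c_0$. That weak Roth bound is then fed back into Varnavides, allowing $\delta$ to be taken arbitrarily small (hence the increment factor arbitrarily large), and rerunning the whole argument gives the theorem for every $c>0$. Without this two-pass structure your argument proves the theorem only for a single value of $c$.
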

\begin{proof}
We begin by establishing the theorem for some $c > 0$; we shall then be able to bootstrap this to establish the full result. Various inequalities in the argument will hold by the assumption that $N$ is large enough; we shall not state this assumption explicitly each time it is used.

Let $A$ be a subset of $\{1,\ldots, N\}$ of size $\alpha N = r_3(N)$ that does not contain any non-trivial three-term progressions, and let $\epsilon > 0$ be a parameter that is to be fixed later. Applying Proposition \ref{T3-approximation} to $A$ we obtain a long arithmetic progression $P$ such that 
\begin{align} | T_3(1_A * \mu_P) - T_3(1_A) | \leq \epsilon |A|^2. \label{T3-estimate}\end{align}
Our argument will be centred around the function
\[ 1_A*\mu_P(x) = |A \cap (x-P)|/|P|; \]
we shall show that if $0 < \delta < 1$ is chosen appropriately then there must be an $x$ for which \[ |A\cap(x-P)| > \delta^{-1}\alpha |P|. \]
This will form the base of our density increment argument.

Suppose, then, that $1_A*\mu_P(x) \leq \delta^{-1} \alpha$ for all $x \in \Z$. Let $f(x) := (\delta/\alpha)1_A*\mu_P(x)$, so that $0 \leq f(x) \leq 1$ for all $x$, $\ssum_x f(x) = \delta N$, and 
\[ T_3(f) = (\delta/\alpha)^3 T_3(1_A*\mu_P). \]
Note also that $f$ is supported on $A+P \subset [1-N/8, 9N/8] \cap \Z$, an interval of size at most $5N/4$. Now, $A$ contains only trivial three-term progressions and so $T_3(1_A) = |A|$. Thus \eqref{T3-estimate} implies that
\begin{align}
T_3(f) \leq 2 \delta^3\epsilon N^2/\alpha \label{T3(f)_upper-bound}
\end{align}
provided $\epsilon \geq 1/|A|$. On the other hand, Lemma \ref{varnavides_thm} tells us that 
\[ T_3(f) \geq \left(\tfrac{4}{5}\delta - \tfrac{r_3(M) + 2}{M}\right) M^{-4} N^2 \]
provided $M \leq N^{1/10}/2$.

Let us initially pick $\delta = 9/10$. One may check by hand that $r_3(10) = 5$; by picking $M = 10$ we therefore see that $T_3(f) \geq c_0 N^2$ for some positive absolute constant $c_0$. Comparing this to \eqref{T3(f)_upper-bound} we see that we obtain a contradiction provided we pick $\epsilon = c_1 \alpha$ for some small constant $c_1 > 0$. (This is permissible provided $\alpha \geq 1/\sqrt{c_1 N}$, which we assume.) Hence we must have that 
\[ |A \cap (x-P)| \geq \tfrac{10}{9} \alpha |P| \]
for some integer $x$, where $P$ is a rather long progression. Let us assume that $\alpha \geq (\log N)^{-1/6}$. Then 
\[ |P| \geq \exp\left( (\log N)^{1/8} \right); \]
we have thus shown that $A$ has density at least $\tfrac{10}{9} \left(\tfrac{r_3(N)}{N}\right)$ on an arithmetic progression of length $N_1 := |P|$. We may thus rescale to obtain a set $A_1 \subset \{1,\ldots, N_1\}$ that is also free of arithmetic progressions, but that is now much denser than the original set $A$.

We may now iterate this argument, obtaining a sequence of integers $N_j$ with 
\[ N_j \geq \exp\left( (\log N_{j-1})^{1/8} \right) \]
and a sequence of densities $\delta_j$ such that 
\[ \delta_j \geq \left(\tfrac{10}{9}\right)^j \left(\tfrac{r_3(N)}{N}\right), \]
the only requirements for proceeding to the next stage of the iteration being that $\delta_j \geq (\log N_j)^{-1/6}$ and $N_j \geq C$ for some absolute constant $C$. Since no $\delta_j$ can exceed $1$, this iteration must stop at some stage $K$ with $K \leq \tfrac{\log(N/r_3(N))}{\log 10/9}$, at which point one of these requirements must fail. From this we may deduce that
\[ r_3(N) \leq \frac{C N}{(\log\log N)^{\frac{\log 10/9}{\log 8}}} \]
for some absolute constant $C$.

This proves the theorem for a fixed exponent $c$ of $\log\log N$. We may now use this to run the argument again, except that we do not now need to rely on numerical data in order to apply Lemma \ref{varnavides_thm} effectively. That is, we may now pick $\delta$ arbitrarily small and then find a fixed value $M$ for which $\tfrac{4}{5}\delta - \tfrac{r_3(M)+2}{M} \geq \delta/2$. This means that, instead of obtaining a density increment of a factor of $\tfrac{10}{9}$, we may obtain an increment of an arbitrarily large factor $\delta^{-1}$, still on a progression of length at least $\exp\left( (\log N)^{1/8} \right)$ (though we now need $N$ to be large enough in terms of $\delta$). Following the above argument through again, this shows that 
\[ r_3(N) \leq \frac{C N}{(\log\log N)^{\frac{\log 1/\delta}{\log 8}}} \]
for $N \geq N_0(\delta)$ and some constant $C$ depending on $\delta$.
\end{proof}

\section{Strong approximate groups}\label{discts}
Finally we prove Proposition \ref{discontinuous}, the result about strong approximate groups; recall that we say that $A$ is a strong $K$-approximate group if $1_A*1_A(x) \geq |A|/K$ for each $x \in A^2$. This proposition does not follow directly from the almost-periodicity results; instead it uses the ideas in the proofs of those results in a slightly different way.

\begin{proof}[Proof of Proposition \ref{discontinuous}]
We shall show that if $C \subset A$ is chosen at random then $CA \approx A^2$ with good probability. Indeed, let us start by picking a random set $C \subset A$ of size $k$. By the hypothesis on $A$, any $x \in A^2$ that satisfies $|\mu_C*1_A(x) - 1_A*1_A(x)| < |A|/K$ lies in $CA$, whence
\[ \P( x \notin CA ) \leq \P\left( |\mu_C*1_A(x) - 1_A*1_A(x)| \geq |A|/K \right) \leq 2e^{-2k/K^2}, \]
the latter inequality being a standard distributional inequality for hypergeometric distributions; see, for example, \cite{chvatal} (and cf. Proposition \ref{binomial_deviations}). Summing this over all $x \in A^2$ we obtain the estimate
\[ \E |\{ x \in A^2 : x \notin CA \}| \leq 2e^{-2k/K^2} |A^2|. \]
Markov's inequality therefore yields
\[ \P\left( |A^2 \bigtriangleup CA| \leq \lambda |A^2| \right) \geq 1 - 2e^{-2k/K^2}/\lambda; \]
let us pick $\lambda := 4e^{-2k/K^2}$ to make this probability be at least $1/2$.

Now note that $|A^2| \leq K|A|$; this follows from the inequality $1_A*1_A(x) \geq |A| 1_{A^2}(x)/K$ holding for all $x$. As in the proof of Proposition \ref{L_2-local-left}, this means that there is a set $C$ and a set $T \subset A^{-1}$ of size at least $|A|/(2K)^k$ such that 
\[ |A^2 \bigtriangleup tCA| \leq \lambda |A^2| \]
for any $t \in T$. For any two elements $t_1, t_2 \in T$ we therefore have
\[ | t_2 t_1^{-1} A^2 \bigtriangleup A^2 | \leq 2\lambda |A^2| \]
by the triangle inequality. Thus we may take $S := T T^{-1}$ after choosing $k := \left\lceil (K^2\log{8/\epsilon})/2 \right\rceil$.
\end{proof}

\begin{remark}
It is easy to see that a strong $K$-approximate group must have small doubling, $|A^2| \leq K|A|$, but unlike with sets of small doubling it is not clear how abundant strong $K$-approximate groups of different sizes are, even in the group $\Zmod{p}$ for a prime $p$. Konyagin \cite[Problem 5]{lev:rep-functions} raised the basic question of whether it is the case that for any set $A \subset \Zmod{p}$ of size at most $\sqrt{p}$ there exists some element $x \in A+A$ such that $1_A*1_A(x) \leq C |A|^{1-c}$, where $C, c>0$ are absolute constants. Partial progress was made on this question by \L uczak and Schoen \cite{luczak-schoen}, who also noted that work of Green and Ruzsa \cite{green-ruzsa:rectification} implies that one can always find an $x \in A+A$ with $1_A*1_A(x) \leq \max(1, |A|/(\log_2 p)^{1/2+o(1)})$. The results of this paper can be used to derive a bound similar to this, if perhaps slightly stronger, but we do not pursue this here.
\end{remark}

\section{Further remarks}\label{remarks}
We conclude with some remarks.

\subsection{Convolutions of functions}
Although we have focused on convolutions of sets in this paper, it is relatively easy to deduce results for convolutions of functions. Indeed, let $f, g : G \to [0,1]$ be two functions with finite supports $S_f$ and $S_g$. Define random sets $A, B \subset G$ by stipulating that $x \in A$ with probability $f(x)$ and $x \in B$ with probability $g(x)$, all independently. One may then use a concentration inequality such as Chernoff's inequality \cite[Theorem 1.8]{tao-vu} to deduce that there is a choice of sets $A \subset S_f$ and $B \subset S_g$ such that $A$ has size very close to $\ssum f$, $B$ has size very close to $\ssum g$ and $|1_A*1_B(x) - f*g(x)|$ is small for every $x \in S_1+S_2$. An almost-periodicity result saying that
\[ \norm{ f*g(tx) - f*g(x) }_2^2 \leq \epsilon^2 \left(\ssum f\right) \left(\ssum g \right)^2 \]
for every $t \in TT^{-1}$ for a large set $T$ then follows from the corresponding result for sets, and similarly for $L^p$-almost-periodicity. One may then deal with arbitrary real-valued functions with finite support by rescaling. It is also likely that one can prove the almost-periodicity results directly for functions, though the statements will look slightly different; we do not pursue this here.

\subsection{Comparisons with Fourier-analytic results}
Our proofs of the almost-periodicity results in this paper have been combinatorial, which meant that there was no need for us to distinguish between abelian and non-abelian groups. When dealing with finite abelian groups, however, it is possible to derive results similar to Corollaries \ref{L_2-global} and \ref{L_p-global} using Fourier analysis. Indeed, in the abelian setting Corollary \ref{L_2-global} is essentially a result of Bogolyubov \cite{bogolyubov} coupled with a result of Chang \cite{chang} on the large spectra of subsets of abelian groups; see Lemma 4.36 and (the proof of) Proposition 4.39 in \cite{tao-vu}. An important difference between the two approaches is that Fourier analysis provides one with more information about the set $T$: one may take it to be a so-called Bohr set (an approximate annihilator of a set of characters in the Pontryagin dual of $G$), and it is well known that Bohr sets are arithmetically structured sets. For instance, Bohr sets contain long arithmetic progressions, which means that one does not need to appeal to structure-generation results like Lemma \ref{local-sarkozy}. If one uses this as the base for the arguments of \S\ref{roth} (set in $\Zmod{N}$ rather than $[N]$) then one can obtain a bound for $r_3(N)$ similar to that of an old but recently published proof of Roth's theorem due to Szemer\'edi \cite{szemeredi}; indeed, our argument is in some ways quite similar to Szemer\'edi's. We present further details of this argument in the note \cite{croot-sisask:SzemerediRothNote}.

It is much less clear that one can obtain an $L^p$-almost-periodicity result of a type similar to Corollary \ref{L_p-global} for abelian groups using Fourier analysis. One may extract such a result from the paper \cite{bourgain:longAPs} of Bourgain that exhibits the existence of long arithmetic progressions in $A+B$; indeed, the main thrust of the paper is to establish the estimates required to prove such an almost-periodicity result. Specifically one can obtain a result of the following type.

\begin{proposition}\label{bourgain-periodicity}
Let $G$ be a finite abelian group and let $\epsilon > 0$ and $m \in \N$ be two parameters. Suppose that $f, g : G \to [0,1]$ have averages $\E_{x \in G} f(x) = \alpha$ and $\E_{x \in G} g(x) = \beta$. Then there is a Bohr set $B = B(\Gamma, \rho)$ of rank $|\Gamma| \ll m^2\log(1/\epsilon)/\epsilon^2$ and radius $\rho = c \epsilon^3/m$ such that
\[ \norm{ f*g(x+t) - f*g(x) }_{2m} \leq \epsilon (\alpha \beta)^{1/2}|G|^{1+1/2m} \]
for each $t \in B$.
\end{proposition}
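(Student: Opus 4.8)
This is a Fourier-analytic statement, so the plan is to work on the Pontryagin dual $\widehat{G}$ throughout and exploit the identity $\widehat{f*g} = \widehat{f}\,\widehat{g}$. First I would fix the normalisation: write $\widehat{f}(\gamma) = \sum_x f(x)\overline{\gamma(x)}$ and recall that $f*g(x+t) - f*g(x) = \sum_\gamma \widehat{f}(\gamma)\widehat{g}(\gamma)\gamma(x)(\gamma(t)-1)$. The whole game is then to choose $B = B(\Gamma,\rho)$ so that $|\gamma(t)-1|$ is uniformly tiny on the \emph{large spectrum} of the product $\widehat{f}\widehat{g}$, while a brute-force bound handles the small-spectrum tail in the $L^{2m}$ norm. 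Since we only have an $L^{2m}$ conclusion rather than an $L^2$ one, the natural tool is to expand $\norm{f*g(x+t)-f*g(x)}_{2m}^{2m}$ as a $2m$-fold character sum and bound it combinatorially, which is exactly the manipulation carried out in Bourgain's paper \cite{bourgain:longAPs}.

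The key steps, in order, would be: (i) Split the dual group into $\Lambda := \{\gamma : |\widehat{f}(\gamma)\widehat{g}(\gamma)| \geq \tau\,\alpha\beta\,|G|^2\}$ for a threshold $\tau$ to be chosen, and its complement. A Parseval/Rudin-type estimate bounds $|\Lambda|$: indeed $\sum_\gamma |\widehat f(\gamma)|^2 = |G|\norm{f}_2^2 \leq \alpha|G|^2$ and similarly for $g$, so Cauchy--Schwarz gives $|\Lambda| \tau^2\alpha^2\beta^2|G|^4 \leq \sum |\widehat f \widehat g|^2 \leq \alpha\beta|G|^4$, i.e. $|\Lambda| \leq (\tau^2\alpha\beta)^{-1}$; one can do better for the $L^{2m}$ bound using a Chang-type argument, which is where the logarithmic dependence enters. (ii) Take $\Gamma := \Lambda$ and $\rho := c\epsilon^3/m$; then on $B(\Gamma,\rho)$ every $\gamma \in \Lambda$ satisfies $|\gamma(t)-1| \leq 2\pi\rho \ll \epsilon^3/m$, so the large-spectrum contribution to $f*g(x+t)-f*g(x)$ is pointwise at most $\ll (\epsilon^3/m)\sum_{\gamma\in\Lambda}|\widehat f\widehat g(\gamma)| \ll (\epsilon^3/m)(\alpha\beta)^{1/2}|G|^2 \cdot |\Lambda|^{1/2}$ after Cauchy--Schwarz, and raising to the $2m$-th power and summing over $x$ (an extra factor $|G|$) gives something comfortably below $\epsilon^{2m}(\alpha\beta)^m|G|^{2m+1}$. (iii) The small-spectrum part: here $\norm{\cdot}_{2m}^{2m}$ is estimated by expanding into a sum over $2m$-tuples $(\gamma_1,\dots,\gamma_{2m})$ with $\gamma_1\cdots\gamma_{2m}=1$ (after pairing conjugates), each weighted by $\prod_i \widehat f(\gamma_i)\widehat g(\gamma_i)(\gamma_i(t)-1)$; using $|\gamma_i(t)-1|\leq 2$ and the $L^\infty$ bound $|\widehat f(\gamma)\widehat g(\gamma)| < \tau\alpha\beta|G|^2$ on one factor while the remaining $2m-1$ factors are controlled by iterated Parseval and Young's inequality, one gets a bound of the shape $(\tau\alpha\beta|G|^2)^{m}\cdot(\alpha\beta|G|^2)^{m}\cdot|G|$ up to constants depending on $m$. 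Balancing the two threshold contributions fixes $\tau \sim \epsilon^{O(1)}$ and hence $|\Gamma| \sim m^2\log(1/\epsilon)/\epsilon^2$.

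The main obstacle is step (iii): controlling the small-spectrum $L^{2m}$ norm honestly. A naive $\ell^1$-in-the-dual bound on the tail is far too weak, so one must genuinely expand the $2m$-th moment and extract one $L^\infty$ factor at the threshold level while summing the rest via Parseval. Getting the dependence on $m$ and $\epsilon$ to come out as stated — rather than with a worse power of $m$ or an extra logarithm — requires the careful combinatorial bookkeeping of the frequency tuples that is the technical heart of \cite{bourgain:longAPs}; this is also where a Chang-type bound on $|\Lambda|$ (rather than the crude Parseval bound) must be invoked to keep $|\Gamma|$ logarithmic in $1/\epsilon$. Once that estimate is in hand, assembling steps (i)–(iii) and optimising $\tau$ and $\rho$ is routine. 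I would present the argument by citing the relevant lemmas of \cite{bourgain:longAPs} for the moment expansion and only spelling out the dictionary between Bourgain's formulation (long APs in $A+B$) and the almost-periodicity statement above, since the underlying estimates are identical.
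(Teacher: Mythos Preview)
The paper does not actually prove this proposition. It is stated in \S\ref{remarks} as a result that ``one may extract'' from Bourgain's paper \cite{bourgain:longAPs}, with the remark that ``Bourgain's argument is very elegant though also somewhat complex, relying on some quite sophisticated manipulations of sets of Fourier coefficients. We shall not say more about this here'', and the reader is referred to the separate note \cite{sisask:bourgain-APs} for details. So there is no in-paper proof to compare your attempt against.

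That said, your outline is in the right spirit for such an extraction: Fourier-expand $f*g(\cdot+t)-f*g(\cdot)$, isolate a large-spectrum set to define the Bohr set, and control the tail in $L^{2m}$ via a moment expansion. One point where your sketch is too optimistic, and where the genuine difficulty of Bourgain's argument lies, is obtaining the rank bound $|\Gamma| \ll m^2\log(1/\epsilon)/\epsilon^2$ \emph{independently of $\alpha$ and $\beta$}. Taking $\Gamma$ to be the level set $\Lambda = \{\gamma : |\widehat f(\gamma)\widehat g(\gamma)| \geq \tau\alpha\beta|G|^2\}$ and appealing to Parseval or even Chang's lemma directly will produce bounds that depend on the densities; Bourgain's actual construction of $\Gamma$ is more delicate than a single threshold set, involving an iterative or multi-scale selection of frequencies together with the moment manipulation you allude to in step (iii). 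Your plan of ``citing the relevant lemmas of \cite{bourgain:longAPs}'' and supplying only the dictionary is therefore honest about where the work is, and matches what the paper itself does---namely, defer to \cite{bourgain:longAPs} and \cite{sisask:bourgain-APs}.
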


By a Bohr set $B(\Gamma, \rho)$ here we mean a set of the form 
\[ \{ x \in G : |\gamma(x)-1| \leq \rho \text{ for all $\gamma \in \Gamma$} \}, \]
where $\Gamma \subset \widehat{G}$ is a collection of characters.

Bourgain's argument is very elegant though also somewhat complex, relying on some quite sophisticated manipulations of sets of Fourier coefficients. We shall not say more about this here, save for making two comments. First, the set $B$ produced by the above proposition will in general be somewhat smaller than the set $T$ given by Corollary \ref{L_p-global}, but is also guaranteed to contain more structure, which is ultimately what yields Bourgain's superior exponent of $1/3$ in place of our $1/4$ in the length of the arithmetic progressions one finds in $A+B$. Second, if one wishes to compare the $L^p$ norm to $\alpha \beta$, say, then Corollary \ref{L_p-global} is useful even if one of the sets $A$ and $B$ is rather sparse whereas Proposition \ref{bourgain-periodicity} requires both sets to be quite large. More details about Proposition \ref{bourgain-periodicity} may be found in the note \cite{sisask:bourgain-APs}.

Obtaining the local versions of our results using Fourier analysis seems harder. We note that there are tools that get around this to some extent; notably there is the `modelling' lemma of Green and Ruzsa \cite{green-ruzsa:freiman} that allows one to `isomorphically embed' a set $A \subset G$ with small doubling $|A+A| \leq K|A|$ as a dense set $A' \subset G'$, where $|A'| \geq f(K) |G'|$. See for example the paper \cite{sanders:local-roth} of Sanders for an efficient proof of a local version of Roth's theorem that makes use of this lemma. Interestingly, modelling results of the same kind cannot exist for non-abelian groups \cite{tao:no-finite-models}.

\subsection{Roth's theorem in other settings}
In this paper we proved Roth's theorem in the setting of the integers $\{1, \ldots, N\}$. The Fourier-analytic proofs of Roth's theorem generally become simpler when studied in the vector space $\F_3^n$ over the finite field $\F_3$ (or $\F_p^n$ for a fixed prime $p$), and this holds true for our argument as well. There are two main reasons for this. One is that it is very easy to establish a result similar to Lemma \ref{local-sarkozy} in $\F_3^n$, as remarked in \S\ref{structure-generation}. The other is that it becomes easier to run through the density increment strategy itself, since one can induct on subspaces rather than on arithmetic progressions. In particular one does not really need a result corresponding to Lemma \ref{varnavides_thm} (Varnavides' theorem). The bounds one obtains for $r_3(\F_3^n)$ are not significantly better than the corresponding ones for $r_3(N)$ with $N \approx 3^n$, however.

We should mention in this context that Seva Lev has recently produced a proof \cite{lev:meshulam} of the $\F_p^n$-version of Roth's theorem that removes the use of characters from the general framework of Meshulam's proof \cite{meshulam}. Lev's proof involves very different ideas to those of this paper, however.

\subsection{Extensions}
There are many possible potential extensions of the methods presented in this paper. It seems likely that the ideas used could also be used to tackle locally compact groups, this being a natural setting for many of the results considered here (where we have only dealt with discrete groups). An area of application that we have not discussed in detail in the current paper is that of Freiman-type results; let us for now remark that it is easy to obtain a number of rudimentary Freiman-type theorems by coupling our almost-periodicity results with so-called covering lemmas. This will be followed up elsewhere.

\appendix
\section{The moments of the binomial and hypergeometric distributions}\label{appendix:hyper_moments}
As noted in the proof of Proposition \ref{L_p-local-left}, if one selects a random $k$-element subset $C$ from a set $A$ in an ambient group $G$ then, for any fixed element $x \in G$, the random variable $1_C*1_B(x)$ follows a hypergeometric distribution. In this appendix we prove the bounds of Lemma \ref{hyper_moments} on the moments of such a distribution.

Recall that $X$ follows a hypergeometric distribution with parameters $N$, $M$ and $k$ if
\[ \P(X = j) = \binom{M}{j}\binom{N-M}{k-j}\Big/\binom{N}{k}, \]
so that $X$ can be thought of as counting the number of marked objects selected when $k$ objects are picked without replacement from a population of $N$ objects, $M$ of which are marked. If the $k$ objects are selected \emph{with} replacement then the number of marked objects selected follows a binomial distribution with parameters $n = k$ and $p = M/N$, and the two distributions are closely related. We have found certain estimates for the binomial distribution to be more readily available in print than the corresponding estimates for the hypergeometric distribution; the following corollary of a result of Hoeffding \cite[Theorem 4]{hoeffding} allows us to make use of these results.

\begin{proposition}
Let $X$ follow a hypergeometric distribution as above and let $Y$ follow a binomial distribution with parameters $n = k$ and $p = M/N$. Then for any convex, continuous function $f$ we have 
\[ \E f(X) \leq \E f(Y). \]
In particular, for $m \geq 1/2$ we have
\[ \E \abs{ X - \tfrac{k M}{N} }^{2m} \leq \E \abs{ Y - np }^{2m}. \]
\end{proposition}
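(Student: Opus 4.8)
The plan is to observe that the substance of the proposition is the first inequality $\E f(X) \le \E f(Y)$, which is exactly Hoeffding's comparison theorem \cite[Theorem~4]{hoeffding}: if one samples $k$ items from a finite population of reals $c_1,\dots,c_N$, then the sum of a sample drawn \emph{without} replacement is, for every convex continuous $f$, no larger in $\E f$ than the sum of a sample drawn \emph{with} replacement. Taking the population to consist of $M$ ones and $N-M$ zeros, the without-replacement sum has the hypergeometric law of $X$ and the with-replacement sum has the $\mathrm{Bin}(k,M/N)$ law of $Y$, so the first inequality is immediate from Hoeffding's theorem. I would simply cite it; but if a self-contained argument is wanted, the classical one runs as follows.

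Couple the two samples through a uniformly random permutation $\Pi$ of $\{1,\dots,N\}$: the without-replacement sample is $(c_{\Pi(1)},\dots,c_{\Pi(k)})$, while $(c_{\Pi(R_1)},\dots,c_{\Pi(R_k)})$, with $R_1,\dots,R_k$ independent uniform on $\{1,\dots,N\}$ and independent of $\Pi$, is i.i.d.\ uniform on the population (the conditional law given $\Pi$ does not depend on $\Pi$) and so realises the with-replacement sample. Conditioning on the collision pattern of $(R_1,\dots,R_k)$, the with-replacement sum becomes $\sum_{j=1}^\ell m_j W_j$ where $(W_1,\dots,W_\ell)$ is a without-replacement sample of some size $\ell\le k$ and the multiplicities $m_j\ge 1$ sum to $k$; it then remains to show that ``coarsening'' a size-$k$ without-replacement sample into such a weighted size-$\ell$ sample can only increase $\E f$. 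This last point---the only real obstacle---is a short induction on the number of collisions $k-\ell$, each step reducing to the elementary fact that for convex $g$ and an exchangeable pair $(U,V)$ one has $\E g(aU+U)+\E g(aV+V)\ge \E g(aU+V)+\E g(aV+U)$ for $a\ge 0$, which holds pointwise in $(U,V)$ by monotonicity of the difference $x\mapsto g(x+u)-g(x+v)$. As this is all in the literature, citing Hoeffding is the path I would actually take.

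For the final clause, note $n=k$ and $p=M/N$, so $np=kM/N$ and it suffices to apply the first inequality to $f(t):=\abs{t-kM/N}^{2m}$. The one thing to verify is that this $f$ is convex and continuous: $t\mapsto\abs{t-a}$ is convex and continuous, and $s\mapsto s^{2m}$ is continuous, non-decreasing and convex on $[0,\infty)$ precisely when $2m\ge 1$, i.e.\ $m\ge 1/2$, so the composition is convex. This is routine, and completes the deduction.
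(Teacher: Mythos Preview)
Your proposal is correct and matches the paper's approach exactly: the paper does not give a proof but simply states the proposition as a corollary of Hoeffding's result \cite[Theorem~4]{hoeffding}, which is precisely what you do, with the added (and correct) verification that $t\mapsto\abs{t-kM/N}^{2m}$ is convex for $m\ge 1/2$. Your optional self-contained sketch goes beyond what the paper provides.
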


Lemma \ref{hyper_moments} therefore follows immediately from the following proposition.

\begin{proposition}\label{binomial_moments}
Let $m \geq 1$ and suppose that $X$ follows a binomial distribution with parameters $n$ and $p$. Then
\begin{equation}
\E |X - np|^{2m} \leq 2(3m n p + m^2)^m. \label{binom_conditional_estimate}
\end{equation}
\end{proposition}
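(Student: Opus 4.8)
The plan is to bound the central absolute moment $\E|X - np|^{2m}$ of a binomial random variable $X = \sum_{i=1}^n X_i$, where the $X_i$ are i.i.d.\ Bernoulli($p$) variables. The natural route is to compare the centred binomial to a sum of independent centred Bernoulli variables and invoke a Rosenthal-type or Marcinkiewicz--Zygmund-type inequality; but since we want an \emph{explicit, clean} constant of the shape $2(3mnp + m^2)^m$, I would instead work directly. Write $Y_i := X_i - p$, so that $X - np = \sum_{i=1}^n Y_i$ is a sum of independent, mean-zero variables each bounded by $1$ in absolute value, with $\E Y_i^2 = p(1-p) \le p$ and $\sigma^2 := \Var(X) = np(1-p) \le np$.

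First I would expand $\E(X-np)^{2m} = \sum \E(Y_{i_1}\cdots Y_{i_{2m}})$ over all $2m$-tuples of indices. By independence and the vanishing of $\E Y_i$, a term survives only if every index that appears, appears at least twice; so each surviving term corresponds to a partition of the $2m$ positions into blocks of size $\ge 2$, with distinct indices assigned to distinct blocks. If a partition has $\ell$ blocks (so $\ell \le m$), summing the assigned distinct indices over $\{1,\dots,n\}$ and using $|\E Y_i^j| \le \E Y_i^2 \le p$ for every $j \ge 2$ (valid since $|Y_i| \le 1$), each such partition contributes at most $(np)^\ell \le \max((np)^m, \text{lower powers})$; more precisely at most $(np)^\ell$, and since $np$ may be small we simply bound $(np)^\ell \le (np + 1)^m$ or keep the powers and combine at the end. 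Multiplying by the number of partitions of a $2m$-set into blocks of size $\ge 2$ — which is bounded by the total number of partitions, i.e.\ the Bell number $B_{2m}$, or more carefully by a bound of the form $(2m)! / (2^m m!) \cdot (\text{something})$ — gives $\E(X-np)^{2m} \le (\text{combinatorial factor}) \cdot (\text{max power of } np)$. Since $(2m)!/(2^m m!) = (2m-1)!! \le (2m)^m$ and finer counting introduces the $m^2$ term, one massages this into $\le 2(3m np + m^2)^m$.

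The cleanest packaging of the combinatorics is probably this: group the partitions by the number $\ell$ of blocks. The number of set partitions of $[2m]$ into exactly $\ell$ blocks all of size $\ge 2$ is at most $\binom{2m}{2,2,\dots}$-type quantity bounded by $\frac{(2m)!}{\ell!\,2^\ell}\cdot$(ways to distribute remaining $2m-2\ell$ elements) $\le (2m)^{2m-\ell}/\ell!$ up to constants; pairing this with $(np)^\ell n^{\,0}$ and the falling-factorial count $n(n-1)\cdots$ for the distinct indices, one gets a sum $\sum_{\ell \le m} c_\ell (np)^\ell m^{2m-2\ell}$ that telescopes via the binomial theorem into $(Cmnp + Cm^2)^m$ with an absolute $C$; tracking constants carefully yields $C = 3$ and the leading factor $2$. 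Alternatively — and this is cleaner — induct on $m$: condition on which positions equal the last distinct index used, peel off a factor, and set up a recursion $\E(X-np)^{2m} \le (\text{poly in } m) \big((np)\E(X-np)^{2m-2} + m\,\E(X-np)^{2m-2}\big)$, solving to the claimed bound. A further possibility is to use the known moment generating function bound $\E e^{\lambda(X-np)} \le e^{np(e^\lambda - 1 - \lambda)} \le e^{np\lambda^2/2}$ for $0\le\lambda\le 1$ together with $\E|Z|^{2m} \le (2m)!\,\lambda^{-2m} \E\cosh(\lambda Z)$, optimising $\lambda = \min(1, \sqrt{2m/np})$; this handles the two regimes $np \ge 2m$ and $np < 2m$ separately and reassembles to the stated form, which is perhaps the quickest path in practice.

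The main obstacle is purely bookkeeping: getting the explicit constants $3$ and $2$ out of whichever method is chosen, and in particular handling the regime $np \lesssim m^2$ where the $m^2$ summand dominates and one must not lose a factor like $m!$ or $(2m)!$ that would ruin the bound. The MGF approach sidesteps the partition-counting entirely and localizes the difficulty to the elementary optimization over $\lambda \in (0,1]$ plus the split into the two cases; I expect to take that route, since it most transparently produces a bound of exactly the advertised shape $2(3mnp + m^2)^m$ after bounding $(2m)!\,(2m)^{-2m} \le$ const and absorbing constants.
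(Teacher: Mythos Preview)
Your sketch is plausible and the approaches you outline are all sound in principle, but the paper takes a different route from any of the three you describe. Rather than expanding moments combinatorially or extracting them from the Taylor series of the moment generating function, the paper writes
\[
\E|X-np|^{2m} = \int_0^\infty \P(|X-np|^{2m} > t)\,dt,
\]
splits this into upper- and lower-tail contributions, and plugs in the Bernstein-type deviation estimates $\P(X \le np - t) \le \exp(-t^2/2np)$ and $\P(X \ge np + t) \le \exp(-t^2/2(np+t/3))$. The lower-tail integral evaluates to $(2np)^m\Gamma(m+1)$; the upper-tail integral is further split at a carefully chosen threshold (depending on a parameter $\lambda$ solving a quadratic), the two pieces are estimated separately, and combining via $\Gamma(m+1) \le 2(3m/5)^m$ gives the stated constants after some algebra.

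Your preferred MGF approach is morally close to this, since the tail bounds the paper uses are themselves Markov applied to $e^{\lambda(X-np)}$; the difference is that the paper works on the distribution-function side and integrates, while you would stay on the transform side and read off a single Taylor coefficient. The paper's method has the advantage that the asymmetric Bernstein bound (with the $t/3$ correction in the denominator) naturally produces the two-term structure $3mnp + m^2$ through the threshold-splitting, whereas in your MGF route you would need to be somewhat careful: the bound $\E e^{\lambda(X-np)} \le e^{np\lambda^2/2}$ you quote is only valid for $\lambda \le 0$, and for $\lambda \in (0,1]$ one has an extra factor like $e^{|\lambda|/3}$ in the exponent, which is exactly what generates the $m^2$ term after optimisation. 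With that correction the two-regime analysis you describe does work, though a quick calculation suggests that squeezing out the precise constants $2$ and $3$ (especially the leading $2$ when $np$ is small) is not entirely automatic from the crude Taylor-coefficient inequality plus Stirling; you may end up needing a case split or a small-$m$ check by hand. The combinatorial partition-counting approach you mention first would also work but, as you note, makes the constant-tracking considerably more painful.
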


In order to prove this we shall make use of the following deviation estimates, the type of which is often associated with the names of Bennett, Bernstein, Chernoff and Hoeffding.

\begin{proposition}\label{binomial_deviations}
Let $X$ follow a binomial distribution with parameters $n$ and $p$. Then
\begin{align}
\P(X \leq np - t) &\leq \exp\left(-\frac{t^2}{2np}\right) \label{binom_small} \\
\text{and} \quad \P(X \geq np + t) &\leq \exp\left(-\frac{t^2}{2(np + t/3)}\right) \label{binom_large}
\end{align}
for any $t \geq 0$.
\end{proposition}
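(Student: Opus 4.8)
The plan is to use the standard exponential-moment (Chernoff) method. Write $X = X_1 + \cdots + X_n$ as a sum of independent $\{0,1\}$-valued random variables, each equal to $1$ with probability $p$, so that the moment generating function factorises and
\[ \E e^{\lambda X} = \bigl(1 + p(e^\lambda - 1)\bigr)^n \leq \exp\bigl(np(e^\lambda - 1)\bigr) \]
for every real $\lambda$, the inequality being $1 + x \leq e^x$. We may clearly assume $0 < p < 1$. To prove the lower-tail bound \eqref{binom_small} I would apply Markov's inequality to $e^{-\lambda X}$ for $\lambda \geq 0$: for $t \geq 0$,
\[ \P(X \leq np - t) \leq e^{\lambda(np - t)}\,\E e^{-\lambda X} \leq \exp\bigl(np(\lambda + e^{-\lambda} - 1) - \lambda t\bigr). \]
Using the elementary inequality $e^{-\lambda} \leq 1 - \lambda + \lambda^2/2$ (valid for all $\lambda \geq 0$) to bound $\lambda + e^{-\lambda} - 1 \leq \lambda^2/2$ and then optimising the quadratic $np\lambda^2/2 - \lambda t$ at $\lambda = t/np$ produces the exponent $-t^2/(2np)$ exactly.

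For the upper-tail bound \eqref{binom_large} I would similarly apply Markov's inequality to $e^{\lambda X}$, now for $0 \leq \lambda < 3$:
\[ \P(X \geq np + t) \leq e^{-\lambda(np + t)}\,\E e^{\lambda X} \leq \exp\bigl(np(e^\lambda - 1 - \lambda) - \lambda t\bigr). \]
The key estimate here is $e^\lambda - 1 - \lambda \leq \frac{\lambda^2}{2(1 - \lambda/3)}$ for $0 \leq \lambda < 3$, which follows by comparing $\sum_{k \geq 2} \lambda^k/k!$ termwise with $\frac{\lambda^2}{2}\sum_{j \geq 0}(\lambda/3)^j$ via $k! \geq 2\cdot 3^{k-2}$. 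Substituting this bound and choosing $\lambda := t/(np + t/3)$, which lies in $[0,3)$, one computes $1 - \lambda/3 = np/(np + t/3)$, so the exponent telescopes to $\tfrac12\lambda t - \lambda t = -\tfrac12\lambda t = -t^2/(2(np + t/3))$, exactly the claimed bound. (When $np - t < 0$ or $np + t > n$ the corresponding probability vanishes, so these cases need no attention.)

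There is no serious obstacle here: the argument is entirely routine once one has the right two elementary inequalities for $e^{-\lambda}$ and for $e^\lambda - 1 - \lambda$, the only mild care being to pick them sharply enough that the optimisation yields precisely the constants $2$ and $2/3$ of the statement rather than something weaker. An alternative I would mention is to carry out the exact optimisation $\lambda = \log(1 + t/np)$ in the upper-tail estimate to obtain the Bennett bound $\P(X \geq np + t) \leq \exp\bigl(-np\,h(t/np)\bigr)$ with $h(u) = (1+u)\log(1+u) - u$, and then invoke the standard inequality $h(u) \geq \frac{u^2}{2(1 + u/3)}$, with the lower tail handled by the same pattern; but the direct computation above is shorter and self-contained, and it is the version I would present.
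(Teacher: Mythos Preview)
Your proof is correct and follows precisely the method the paper indicates: the paper does not give its own proof but merely cites references and remarks that the bounds ``can be derived from an application of Markov's inequality to the random variable $e^{\lambda(X-np)}$ using the fact that the moment generating function $\E e^{\lambda(X - np)}$ is $e^{-\lambda p n} (p e^{\lambda} + 1-p)^n$.'' You have filled in exactly this sketch, with the appropriate elementary inequalities and optimisations to recover the stated constants.
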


Proofs of these estimates may be found in \cite{janson}; see also \cite{bollobas:random_graphs} and \cite{alon-spencer}. They can be derived from an application of Markov's inequality to the random variable $e^{\lambda(X-np)}$ using the fact that the moment generating function $\E e^{\lambda(X - np)}$ is $e^{-\lambda p n} (p e^{\lambda} + 1-p)^n$.

\begin{proof}[Proof of Proposition \ref{binomial_moments}]
We may write 
\begin{equation}
\E |X - np|^{2m} = \int_0^{\infty} \P( |X - np|^{2m} > t ) \ud t. \label{moment-integral}
\end{equation}
Since $\P( |X - np| > t ) = \P( X < np - t ) + \P( X > np + t )$ we may decompose the right-hand side of \eqref{moment-integral} as a sum of two integrals $I^-$ and $I^+$ in an obvious way. The deviation estimates \eqref{binom_small} and \eqref{binom_large} then give
\[ I^- \leq \int_0^{\infty} \exp\left(-\frac{t^{1/m}}{2np}\right) \ud t = (2np)^m \Gamma(m+1) \]
and
\[ I^+ \leq \int_0^{\infty} \exp\left( \frac{-t^{1/m}}{2(np + t^{1/2m}/3)} \right) \ud t. \]
We split the range of integration of this latter integral into two parts $I_1$ and $I_2$ defined as follows. Let $\lambda := \frac{1}{3} + \frac{1}{3}\sqrt{1+6np/m}$, so that $9(\lambda m)^2/2(np + \lambda m)=3m$; $I_1$ is then the integral over the range $0 \leq t \leq (3\lambda m)^{2m}$ and $I_2$ the integral over the remaining range. Thus
\[ I_1 \leq \int_0^{\infty} \exp\left( -\frac{t^{1/m}}{2(np+\lambda m)}\right) \ud t = \left(2np + 2\lambda m\right)^m \Gamma(m+1). \]
We need to take a little more care with $I_2$. Let us write $w := \frac{9(\lambda m)^2}{2(np + \lambda m)} = 3m$. Then
\[ I_2 \leq \int_{(3\lambda m)^{2m}}^{\infty} \exp\left(- \frac{3t^{1/2m}}{2(1+np/\lambda m)}\right) \ud t = 2m\, \left(\frac{2(np+\lambda m)}{3\lambda m}\right)^{2m} \int_w^{\infty} z^{2m-1} e^{-z} \ud z. \]
Making the change of variables $u = z - w$, this last integral becomes
\[ w^{2m-1} e^{-w} \int_0^{\infty} \left(1+\tfrac{u}{w}\right)^{2m-1} e^{-u} \ud u \leq w^{2m-1} e^{-w} \int_0^{\infty} e^{-u(1-2m/w)} \ud u, \]
the inequality holding since $1+x \leq e^x$ for all $x$, and this expression equals $w^{2m} e^{-w}/m$. Thus
\[ I_2 \leq 2(3\lambda m)^{2m} e^{-3m}. \]

Combining these estimates for $I^-$ and $I^+ = I_1 + I_2$ we obtain 
\begin{align*}
\E |X - np|^{2m} &\leq (2np)^m \Gamma(m+1) + (2np+2\lambda m)^m \Gamma(m+1) + 2(9\lambda^2 m^2/e^3)^m.
\end{align*}
Using the easily-verifiable bound $\Gamma(m+1) \leq 2(3m/5)^m$ and the definition of $\lambda$ then yields \eqref{binom_conditional_estimate} after some routine but technical calculations.
\end{proof}

\begin{remark}
By being a bit more careful in the above proof one could obtain somewhat smaller values for the constants appearing in the proposition, though this is not particularly important for our applications. We should also remark that, although we only required it for binomial random variables, Proposition \ref{binomial_moments} holds even when $X$ is a sum of independent Bernoulli random variables that are not necessarily identically distributed. In that setting $n$ is the number of summands and $p$ is $\E X/n$, and one may prove the result exactly as above since Proposition \ref{binomial_deviations} holds for such random variables. (Let us also note that Proposition \ref{binomial_moments} holds with different constants for sums of more general random variables.)
\end{remark}


\begin{thebibliography}{99}
\bibitem{alon-spencer} N. Alon and J. H. Spencer, \emph{The probabilistic method,} third edition (John Wiley \& Sons, 2008).
\bibitem{bogolyubov} N. Bogolio\`uboff, \emph{Sur quelques propri\'et\'es arithm\'etiques des presque-p\'eriodes,} Ann. Chaire Phys. Math. Kiev \textbf{4} (1939), 185--205.
\bibitem{bollobas:random_graphs} B. Bollob\'as, \emph{Random graphs,} second edition (CUP, 2001).
\bibitem{bourgain:longAPs} J. Bourgain, \emph{On arithmetic progressions in sums of sets of integers,} A tribute to Paul Erd\H{o}s, 105--109 (CUP, 1990).
\bibitem{bourgain} \bysame, \emph{Roth's theorem on progressions revisited,} J. Anal. Math. \textbf{104} (2008), 155--192.
\bibitem{breuillard-green1} E. Breuillard and B. Green, \emph{Approximate groups, I: the torsion-free nilpotent case,} arXiv:0906.3598 (2009).
\bibitem{breuillard-green2} \bysame, \emph{Approximate groups, II: the solvable linear case,} arXiv:0907.0927 (2009).
\bibitem{breuillard-green-tao} E. Breuillard, B. Green and T. Tao, \emph{Linear Approximate Groups,} arXiv:1001.4570 (2010).
\bibitem{bukh:dilates} B. Bukh, \emph{Sums of dilates,} Combin. Probab. Comput. \textbf{17} (2008), no. 5, 627--639.
\bibitem{chang} M.-C. Chang, \emph{A polynomial bound in Freiman's theorem,} Duke Math. J. \textbf{113} (2002), no. 3, 399--419.
\bibitem{chvatal} V.~Chv\'atal, \emph{The tail of the hypergeometric distribution,} Discrete Math. \textbf{25} (1979), no. 3, 285--287.
\bibitem{croot-ruzsa-schoen} E. Croot, I. Z. Ruzsa and T. Schoen, \emph{Arithmetic progressions in sparse sumsets,} Combinatorial number theory, 157--164 (de Gruyter, Berlin, 2007).
\bibitem{croot-sisask:roth} E. Croot and O. Sisask, \emph{A new proof of Roth's theorem on arithmetic progressions,} Proc. Amer. Math. Soc. \textbf{137} (2009), no. 3, 805--809.
\bibitem{croot-sisask:SzemerediRothNote} \bysame, \emph{A note on proving Roth's theorem using Bogolyubov's method,} note available at \verb+http://people.math.gatech.edu/~ecroot/expository.html+.
\bibitem{FKP} D. Fisher, N. H. Katz and I. Peng, \emph{On Freiman's Theorem in Nilpotent Groups,} arXiv:0901.1409 (2009).
\bibitem{freiman} G. A. Freiman, \emph{Foundations of a structural theory of set addition,} Translations of Mathematical Monographs, Vol 37 (AMS, 1973).
\bibitem{freiman-halberstam-ruzsa} G. A. Freiman, H. Halberstam and I. Z. Ruzsa, \emph{Integer sum sets containing long arithmetic progressions,} J. London Math. Soc. (2) \textbf{46} (1992), no. 2, 193--201.
\bibitem{green:longAPs} B. Green, \emph{Arithmetic progressions in sumsets,} Geom. Funct. Anal. \textbf{12} (2002), no. 3, 584--597.
\bibitem{green:luczak-schoen} \bysame, review MR2429639 (2009k:11023) of \cite{luczak-schoen}, Mathematical Reviews, available at \verb+http://www.ams.org/mathscinet-getitem?mr=2429639+.
\bibitem{green-ruzsa:freiman} B. Green and I. Z. Ruzsa, \emph{Freiman's theorem in an arbitrary abelian group,} J. Lond. Math. Soc. (2) \textbf{75} (2007), no. 1, 163--175.
\bibitem{green-ruzsa:rectification} \bysame, \emph{Sets with small sumset and rectification,} Bull. London Math. Soc. \textbf{38} (2006), no. 1, 43--52.
\bibitem{green-sanders-tao} B. Green, T. Sanders and T. Tao, personal communication.
\bibitem{hoeffding} W. Hoeffding, \emph{Probability inequalities for sums of bounded random variables,} J. Amer. Statist. Assoc. \textbf{58} 1963 13--30.
\bibitem{hrushovski} E. Hrushovski, \emph{Stable group theory and approximate subgroups,} arXiv:0909.2190 (2009).
\bibitem{janson} S. Janson, \emph{Large deviation inequalities for sums of indicator variables,} Tech. Report 1994:34, Uppsala, available at \verb+http://www.math.uu.se/~svante/papers/index.html+.
\bibitem{lev:meshulam} V. F. Lev, \emph{Character-free approach to progression-free sets,} arXiv:0911.0513 (2009).
\bibitem{lev:rep-functions} \bysame, \emph{Reconstructing integer sets from their representation functions,} Electron. J. Combin. \textbf{11} (2004), no. 1, Research Paper 78, 6 pp. (electronic).
\bibitem{luczak-schoen} T. \L uczak and T. Schoen, \emph{On a problem of Konyagin,} Acta Arith. \textbf{134} (2008), no. 2, 101--109.
\bibitem{meshulam} R. Meshulam, \emph{On subsets of finite abelian groups with no $3$-term arithmetic progressions,} J. Combin. Theory Ser. A \textbf{71} (1995), no. 1, 168--172.
\bibitem{polymath} D. H. J. Polymath, \emph{A new proof of the density Hales-Jewett theorem,} arXiv:0910.3926 (2009).
\bibitem{pyber-szabo} L. Pyber and E. Szab\'o, \emph{Growth in finite simple groups of Lie type,} arXiv:1001.4556 (2010).
\bibitem{roth} K.~F.~Roth, \emph{On certain sets of integers,}  J. London Math. Soc.  \textbf{28} (1953), 104--109.
\bibitem{ruzsa:longAPs} I. Z. Ruzsa, \emph{Arithmetic progressions in sumsets,} Acta Arith. \textbf{60} (1991), no. 2, 191--202.
\bibitem{ruzsa:Freiman} \bysame, \emph{Generalized arithmetical progressions and sumsets,} Acta Math. Hungar. \textbf{65} (1994), no. 4, 379--388.
\bibitem{ruzsa-szemeredi} I. Z. Ruzsa and E. Szemer\'edi, \emph{Triple systems with no six points carrying three triangles,} Combinatorics (Proc. Fifth Hungarian Colloq., Keszthely, 1976), Vol. II, pp. 939--945, Colloq. Math. Soc. J\'anos Bolyai, 18, North-Holland, Amsterdam-New York, 1978.
\bibitem{sanders:longAPs} T. Sanders, \emph{Additive structures in sumsets,} Math. Proc. Cambridge Philos. Soc. \textbf{144} (2008), no. 2, 289--316.
\bibitem{sanders:non-abelian-balog-szemeredi} \bysame, \emph{On a non-abelian Balog-Szemer\'edi-type lemma,} arXiv:0912.0306 (2009).
\bibitem{sanders:local-roth} \bysame, \emph{Three-term arithmetic progressions and sumsets,} Proc. Edinb. Math. Soc. (2) \textbf{52} (2009), no. 1, 211--233.
\bibitem{sisask:bourgain-APs} O. Sisask, \emph{Bourgain's proof of the existence of long arithmetic progressions in $A+B$,} note available at \verb+http://www.maths.qmul.ac.uk/~olof/+ (2009).
\bibitem{szemeredi} E. Szemer\'edi, \emph{An old new proof of Roth's theorem,} Additive combinatorics, 51--54, CRM Proc. Lecture Notes, 43, Amer. Math. Soc., Providence, RI, 2007.
\bibitem{tao:no-finite-models} T. Tao, \emph{Finite subsets of groups with no finite models,} blog post available at \verb+http://terrytao.wordpress.com/2008/10/06/finite-subsets-of-groups-with-no-finite-models/+ (2008).
\bibitem{tao:solvable-freiman} \bysame, \emph{Freiman's theorem for solvable groups,} arXiv:0906.3535 (2009).
\bibitem{tao:non-comm} \bysame, \emph{Product set estimates for non-commutative groups,} Combinatorica \textbf{28} (2008), no. 5, 547--594.
\bibitem{tao-vu} T. Tao and V. H. Vu, \emph{Additive Combinatorics} (CUP, 2006).
\bibitem{varnavides} P. Varnavides, \emph{On certain sets of positive density,} J. London Math. Soc. \textbf{34} (1959), 358--360.
\end{thebibliography}
\end{document}